\newtheorem{theorem}{Theorem}
\newenvironment{taggedtheorem}[1]
 {\taggedtheoremx}
 {\endtaggedtheoremx}
\newtheorem{lemma}[theorem]{Lemma}
\newtheorem{proposition}[theorem]{Proposition}
\newtheorem{corollary}[theorem]{Corollary}
\newtheorem{definition}[theorem]{Definition}
\newtheorem{remark}[theorem]{Remark}
\DeclareMathOperator{\esup}{ess\,sup}
\begin{document}

\title[On the fractional version of Leibniz rule]{On the fractional version of Leibniz rule}


\author[P. M. Carvalho-Neto]{Paulo M. de Carvalho-Neto}
\address[Paulo M. de Carvalho Neto]{Departamento de Matem\'atica, Centro de Ciências Físicas e Matemáticas, Universidade Federal de Santa Catarina, Florian\'{o}polis - SC, Brazil}
\email[]{paulo.carvalho@ufsc.br}
\author[R. Fehlberg Junior]{Renato Fehlberg Junior}
\address[Renato Fehlberg Junior]{Departamento de Matem\'atica, Universidade Federal do Esp\'{i}rito Santo, Vit\'{o}ria - ES, Brazil}
\email[]{fjrenato@yahoo.com.br}


\subjclass[2010]{26A33, 33B15, 35R11, 76D05}


\keywords{Caputo fractional derivative, Riemann-Liouville fractional derivative, Faedo-Galerkin method, Leibniz rule, Stokes equations.}


\begin{abstract}
This manuscript is dedicated to prove a new inequality that involves an important case of Leibniz rule regarding Riemann-Liouville and Caputo fractional derivatives of order $\alpha\in(0,1)$. In the context of partial differential equations, the aforesaid inequality allows us to address the Faedo-Galerkin method to study several kinds of partial differential equations with fractional derivative in the time variable; particularly, we apply these ideas to prove the existence and uniqueness of solution to the fractional version of the 2D Stokes equations in bounded domains.
\end{abstract}

\maketitle

\section{Introduction}

The fractional calculus is nowadays considered a prominent mathematical branch which investigate properties of derivatives and integrals of non-integer order. Historically, it emerged almost at the same time of the genesis of classical calculus and owes its origin to an inquiry raised by L'Hospital, in a letter sent to Leibniz, of whether the meaning of a derivative to an integer order could be extended to a non-integer order. For further details on the history of fractional calculus, see Ross \cite{Ro1} and Machado-Kiryakova-Mainardi \cite{MaKiMa0,MaKiMa1}.

Ever since, much has been done to settle down the cornerstones of the theory and to obtain important results; we may cite as few examples of papers recently published in journals of high impact in the mathematical society \cite{AlCaVa1,BaKoSa1,CarFe1,CarPl1,Che1,GiNa1,KeSiVeZa1,KeLiWa1,LiMu1,Pon1,ToYa1}.

In addition to the relevance of fractional calculus to mathematics as a whole, we emphasize that this theory is highly used in applied sciences. Besides the several authors that discuss the possible applications of fractional calculus to engineering, physics, biology, and others (see as general references \cite{DeAb1, LiLi1,Ma1,ZeLiLiTu1}), there is a very interesting connection between random walks, anomalous diffusion and the fractional formulation of differential equations. A precise and important survey that discuss all this connections was done by Metzler-Klafter in \cite{MeKl1}.

Nonetheless, it is important to stress that many classical and simple problems, already solved in the standard calculus theory, sometimes, can be quite complicated to be addressed using the ideas and tools of the fractional calculus. For instance, assume that $f,g:[t_0,t_1]\subset\mathbb{R}\rightarrow\mathbb{R}$ are functions with $n$th continuous derivatives, for some $n\in\mathbb{N}^*:=\{1,2,3,\ldots\}$.

\begin{itemize}
\item[(a)] The general Leibniz rule elucidates that:
\begin{equation}\label{leibniz}
\dfrac{d^n}{dt^n}\big[f(t)g(t)\big]=\displaystyle\sum_{k=0}^n{{n}\choose{k}}f^{(n-k)}(t)g^k(t),\quad \textrm{for every }t\in [t_0,t_1].
\end{equation}

\item[(b)] By recalling  Francesco Fa\`{a} di Bruno formula (see \cite{Cr1,Wa1} as good sources on this subject)
\begin{multline}\label{faa}
\dfrac{d^n}{dt^n}f\big(g(t)\big)=\\\displaystyle\sum{\dfrac{n!}{\gamma_1!\gamma_2!\ldots \gamma_n!}f^{(m)}\big(g(t)\big)\left(\dfrac{g^\prime(t)}{1!}\right)^{\gamma_1}\left(\dfrac{g^{\prime\prime}(t)}{2!}\right)^{\gamma_2}\ldots\,\,\left(\dfrac{g^{(n)}(t)}{n!}\right)^{\gamma_n}},\\ \textrm{for every }t\in [t_0,t_1],
\end{multline}
where the sum is taken over all possible combinations of nonnegative integers $\gamma_1, \gamma_2,\ldots, \gamma_n$ such that
\begin{equation*}\label{numbers}\gamma_1 + 2\gamma_2 + \ldots + n\gamma_n = n\qquad\textrm{and}\qquad \gamma_l + \gamma_2 + \ldots + \gamma_n=m.\end{equation*}
\end{itemize}

In the framework of fractional calculus we cannot expect analogous formulas to \eqref{leibniz} and \eqref{faa}, mainly because fractional derivatives have a non-local behavior, sometimes called ``memory property'', that is not compatible with these identities (see \cite{OrMa1} for more details on the concept that underlies this notion). Despite of the fact that some authors proclaim that their fractional versions of derivative satisfy these equalities (see for instance \cite{BeAkCe1,Ju1,KiIn1,Zhe1} and several others), Tarasov and Liu have already constructed sufficiently convincing arguments that invalidate such claim, as can be seen in \cite{Liu1,Ta1,Ta2,Ta3}.

On the other hand, with a remarkable argument, Podlubny in \cite{Po1}, Baleanu-Trujillo in \cite{BaTr1} and others, give a proof of the correct fractional version of \eqref{leibniz} and \eqref{faa}; these results are respectively stated bellow:

\begin{itemize}
\item[(a')] Assume that $\alpha\in(0,1)$ and $f,g:[t_0,t_1]\subset\mathbb{R}\rightarrow\mathbb{R}$ along with all its derivatives are continuous. Then
\begin{equation*}
D_{t_0,t}^\alpha\big[f(t)g(t)\big]=\displaystyle\sum_{k=0}^\infty{{\alpha}\choose{k}}f^{(k)}(t)D_{t_0,t}^{\alpha-k}g(t),\quad \textrm{for every }t\in (t_0,t_1],
\end{equation*}
and
\begin{multline*}
cD_{t_0,t}^\alpha\big[f(t)g(t)\big]=\displaystyle\sum_{k=0}^\infty{{\alpha}\choose{k}}f^{(k)}(t)D_{t_0,t}^{\alpha-k}g(t)-\dfrac{(t-t_0)^{-\alpha}f(t_0)g(t_0)}{\Gamma(1-\alpha)},\\ \textrm{for every }t\in [t_0,t_1].
\end{multline*}
\item[(b')] Like before, if we assume that $\alpha\in(0,1)$ and $f,g:[t_0,t_1]\subset\mathbb{R}\rightarrow\mathbb{R}$ along with all its derivatives are continuous, we deduce that
{\small\begin{multline*}
D_{t_0,t}^\alpha \Big[f\big(g(t)\big)\Big]=\dfrac{(t-t_0)^{-\alpha}}{\Gamma(1-\alpha)}f\big(g(t)\big)\\+\displaystyle\sum_{k=1}^\infty\binom{\alpha}{k}
\dfrac{(t-t_0)^{k-\alpha}}{\Gamma(k-\alpha+1)}\sum{\dfrac{k!}{\gamma_1!\gamma_2!\ldots \gamma_k!}f^{(m)}\big(g(t)\big)\left(\dfrac{g^\prime(t)}{1!}\right)^{\gamma_1}\left(\dfrac{g^{\prime\prime}(t)}{2!}\right)^{\gamma_2}\ldots\,\,\left(\dfrac{g^{(k)}(t)}{k!}\right)^{\gamma_k}},\\\quad \textrm{for every }t\in (t_0,t_1],
\end{multline*}}
and
{\small\begin{multline*}
cD_{t_0,t}^\alpha \Big[f\big(g(t)\big)\Big]=\dfrac{(t-t_0)^{-\alpha}}{\Gamma(1-\alpha)}\Big[f\big(g(t)\big)-f\big(g(t_0)\big)\Big]\\+\displaystyle\sum_{k=1}^\infty\binom{\alpha}{k}
\dfrac{(t-t_0)^{k-\alpha}}{\Gamma(k-\alpha+1)}\sum{\dfrac{k!}{\gamma_1!\gamma_2!\ldots \gamma_k!}f^{(m)}\big(g(t)\big)\left(\dfrac{g^\prime(t)}{1!}\right)^{\gamma_1}\left(\dfrac{g^{\prime\prime}(t)}{2!}\right)^{\gamma_2}\ldots\,\,\left(\dfrac{g^{(k)}(t)}{k!}\right)^{\gamma_k}},\\\quad \textrm{for every }t\in [t_0,t_1],
\end{multline*}}
where the sum without limits is taken over all possible combinations of nonnegative integers $\gamma_1, \gamma_2,\ldots, \gamma_k$ such that
\begin{equation*}\gamma_1 + 2\gamma_2 + \ldots + k\gamma_k = k\qquad\textrm{and}\qquad \gamma_l + \gamma_2 + \ldots + \gamma_k=m.\end{equation*}
\end{itemize}
Above, the binomial satisfies the identity
$$\binom{\alpha}{0}=1\qquad\textrm{and}\qquad\binom{\alpha}{k}=\dfrac{\alpha(\alpha-1)\ldots(\alpha-k+1)}{k!},$$
while the symbol $D_{t_0,t}^{p}$ \big($cD_{t_0,t}^{p}$\big) is used to denote the Riemann-Liouville (Caputo) fractional derivative of order $p$ at $t_0$, when $p>0$, and the Riemann-Liouville fractional integral of order $p$ at $t_0$, when $p<0$ (for more details on the definition see Section \ref{pre}).

We observe that even the particular case (and yet fundamental to study energy estimates to partial differential equations) that occurs in the standard Leibniz formula when $n=1$ and $f=g$, or in the standard chain rule when $n=1$ and $f(t)=t^2$, i.e.,
\begin{equation}\label{desifrac}
\dfrac{d}{dt}\big[g(t)\big]^2=2\left[\dfrac{d}{dt}g(t)\right]g(t),\\\quad \textrm{for every }t\in [t_0,t_1],
\end{equation}
does not have a simple formulation in the fractional calculus setting.

There are some studies in the direction of achieving some analogous version of equality \eqref{desifrac} to fractional derivatives; we may cite Shinbrot in \cite[Lemmas 5.1 and 5.2]{Sh1} as one of the first examples in the literature. However, it was Alikhanov in \cite[Lemma 1]{Al1} that manage to obtain a breakthrough on this subject.

\begin{taggedtheorem}{A}\label{alikhanov} Consider $\alpha\in(0,1)$ and assume that $f:[0,T]\rightarrow\mathbb{R}$ is an absolutely continuous function. Then
$$cD_{0,t}^\alpha \big[f(t)\big]^2\leq2\Big[cD_{0,t}^\alpha f(t)\Big]f(t),$$
for almost every $t\in [0,T]$.
\end{taggedtheorem}

In the proof of Theorem \ref{alikhanov}, the author heavily uses the restrictive fact that $f$ is absolutely continuous (cf. Definition \ref{caputo}). This kind of hypothesis is tied closely to the following result: If $\alpha\in(0,1)$ and $f:[t_0,t_1]\subset\mathbb{R}\rightarrow\mathbb{R}$ is an absolutely continuous function, it holds that (see Remark \ref{caputo1} for more details)
$$cD_{t_0,t}^\alpha f(t)=J_{t_0,t}^{1-\alpha}f^\prime(t),\quad \textrm{for almost every }t\in [t_0,t_1],$$
where $J_{t_0,t}^{1-\alpha}$ denotes the Riemann-Liouville fractional integral of order $(1-\alpha)$ at $t_0$.

We point out that Zhou-Peng stated in their paper \cite[Lemma 2.3]{ZhPe1} that a natural generalization of the above inequality should be described by the following.

\begin{taggedtheorem}{Z-P}\label{zhou} Suppose that $\alpha\in(0,1)$, $H$ is a Hilbert space and $v:[0,T]\rightarrow H$ is such that $\|v(t)\|_H^2$ is absolutely continuous. Then it holds that
$$cD_{0,t}^\alpha \big\|v(t)\big\|_H^2\leq2\,\Big(v(t),cD_{0,t}^\alpha v(t)\Big)_H,$$
for almost every $t\in [0,T]$.
\end{taggedtheorem}

Motivated by the above result, Zhou-Peng in \cite{ZhPe1} applied the Faedo-Galerkin method to study the fractional version of the Navier Stokes equations in bounded domains. However, the solutions of the reduced Feado-Galerkin equations are not (in general) absolutely continuous and therefore it is not possible to directly apply Theorem \ref{zhou} to complete the steps of the method (for more details see Remark \ref{caputo1}, Sections \ref{applications} and \ref{conclusions}).

Hence, the main objective of this work is to introduce a generalization of Theorems \ref{alikhanov} and \ref{zhou}, besides proving that this kind of result also holds for Riemann-Liouville fractional derivative of order $\alpha\in(0,1)$. We also verify that the obtained inequalities cannot be improved.

The paper is organized as follows: we begin Section \ref{pre} by introducing some special functions and its properties. Then we recall some classical notions and results from the theory of fractional calculus that are recursively used in this manuscript. In Section \ref{matrixtheory} we introduce several new ideas by using  matrix theory in order to prove Theorem \ref{polinomio}, which is a weaker version of our main results. We end this section discussing the sharpness of this theorem. Section \ref{maintheorem} contains our main results, which are Theorems \ref{finalriemann}, \ref{finalcaputo} and \ref{finalvectorial}. In Section \ref{applications} we apply the results obtained in the previous sections to prove the existence and uniqueness of solution to the fractional version of the 2D Stokes equations in bounded domains. Finally, we dedicate Section \ref{conclusions} to discuss some statements done throughout this manuscript and also to point some other applications to the theory developed here.

\section{Theoretical prerequisites}
\label{pre}
In this section we introduce the main tools used in this paper. The subjects addressed here are mainly connected with fractional calculus, which nowadays is a theory well established in the literature. There are several papers, surveys and books which can be used as references for this topic; here follows few examples of them: \cite{Car1,KiSrTr1,Po1,SaKiMa1}.

\subsection{Special functions and related results}

We start, for the sake of completeness of this paper, by introducing the gamma function, the digamma function and some properties that are fundamental to this work.

\begin{definition}\label{gammagamma} Let $\Gamma:\mathbb{C}\backslash\{0,-1,-2,\ldots\}\rightarrow \mathbb{C}$ be the analytical gamma function, which possess the following properties:
    \begin{itemize}
    \item[(a)] If $\operatorname{Re}(z)>0$, then it holds
    $$\Gamma(z)=\displaystyle\int_0^{\infty}{s^{z-1}e^{-s}}\,ds\, .$$
    Above, $\operatorname{Re}(z)$ denotes the real part of $z$.
    \item[(b)] When $n\in\mathbb{N}:=\{0,1,2,\ldots\}$,
    $$\Gamma(n+1)=n!\, .$$
    \item[(c)] If $z\in \mathbb{C}\backslash\{0,-1,-2,\ldots\}$,
    $$z\Gamma(z)=\Gamma(z+1).$$
    \end{itemize}
  \end{definition}

  Between the several results already discussed in the literature regarding the useful properties of gamma function, we begin by introducing Gautschi's inequality (see equation (7) in \cite{Gau1} for details)

\begin{theorem}[Gautschi's Inequality]\label{gautschi} If $s\in[0,1]$, it holds that
$$\left(\dfrac{1}{n+1}\right)^{1-s}\leq\dfrac{\Gamma(n+s)}{\Gamma(n+1)}\leq\left(\dfrac{1}{n}\right)^{1-s},$$
for every $n\in N^*$.
\end{theorem}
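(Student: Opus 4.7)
The plan is to derive both sides of the inequality from the log-convexity of $\Gamma$ on $(0,\infty)$. Log-convexity itself comes from Hölder's inequality applied to the integral representation in Definition \ref{gammagamma}(a): for $x,y>0$ and $\lambda\in[0,1]$, writing $s^{\lambda x+(1-\lambda)y-1}e^{-s}=(s^{x-1}e^{-s})^{\lambda}(s^{y-1}e^{-s})^{1-\lambda}$ and integrating yields $\Gamma(\lambda x+(1-\lambda)y)\leq \Gamma(x)^{\lambda}\Gamma(y)^{1-\lambda}$. The endpoint values $s=0$ and $s=1$ reduce the statement to trivial equalities (or to $1/(n+1)\leq 1/n$), so I will focus on $s\in(0,1)$.

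For the upper bound, the idea is to notice that $n+s=(1-s)\,n+s\,(n+1)$ is a convex combination. Log-convexity then gives $\Gamma(n+s)\leq \Gamma(n)^{1-s}\,\Gamma(n+1)^{s}$, and substituting $\Gamma(n)=\Gamma(n+1)/n$ from Definition \ref{gammagamma}(c) collapses the right-hand side to $\Gamma(n+1)/n^{1-s}$. Dividing by $\Gamma(n+1)$ produces exactly $\Gamma(n+s)/\Gamma(n+1)\leq (1/n)^{1-s}$.

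For the lower bound, the key convex combination is $n+1=s(n+s)+(1-s)(n+1+s)$. Log-convexity yields $\Gamma(n+1)\leq \Gamma(n+s)^{s}\,\Gamma(n+1+s)^{1-s}$, and using $\Gamma(n+1+s)=(n+s)\Gamma(n+s)$ from Definition \ref{gammagamma}(c) this simplifies to $\Gamma(n+1)\leq (n+s)^{1-s}\,\Gamma(n+s)$. Since $s\leq 1$ implies $n+s\leq n+1$, we conclude $\Gamma(n+s)/\Gamma(n+1)\geq 1/(n+s)^{1-s}\geq (1/(n+1))^{1-s}$, which is the desired bound (in fact slightly sharper on the $(n+s)$ side).

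The only nontrivial ingredient is log-convexity, and it is standard; the rest is just spotting the two convex decompositions and invoking the functional equation once. So there is no real obstacle to anticipate here — the argument is essentially a two-line application of Jensen's inequality for $\log\Gamma$.
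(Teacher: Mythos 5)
Your argument is correct: both convex decompositions check out ($n+s=(1-s)n+s(n+1)$ and $n+1=s(n+s)+(1-s)(n+1+s)$), the Hölder derivation of log-convexity is standard and valid for $\lambda\in(0,1)$ with exponents $1/\lambda$ and $1/(1-\lambda)$, the functional equation $z\Gamma(z)=\Gamma(z+1)$ is applied correctly in both places, and the endpoint cases $s=0,1$ are handled. The comparison to make here is that the paper does not prove this statement at all — it simply imports it as equation (7) of Gautschi's 1959 article — so you have supplied a self-contained proof where the authors rely on a citation. Your route is the modern textbook one (Jensen for $\log\Gamma$ after spotting the two convex combinations), and it actually yields the slightly sharper lower bound $\Gamma(n+s)/\Gamma(n+1)\geq (n+s)^{-(1-s)}$, of which the stated $(n+1)^{-(1-s)}$ is a weakening; Gautschi's original derivation proceeds instead through estimates tied to the incomplete gamma function. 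For the purposes of this paper the extra sharpness buys nothing (the theorem is only used qualitatively, in Lemma \ref{gautschiaux}, to get monotonicity and the limits $\varphi_k\to\infty$ and $\varphi_{2k}/\varphi_k\to 2^{\alpha}$), but your proof has the advantage of making the manuscript's prerequisites fully verifiable from Definition \ref{gammagamma} alone.
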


We also emphasize the following theorem proved by Alzer in \cite[Theorem 10 and its remark]{Alz1}.

\begin{theorem}[Alzer's Theorem]\label{alzer} Consider $n\in\mathbb{N}^*$ and assume that $\{a_k\}_{k=1}^n\subset\mathbb{R}$ and $\{b_k\}_{k=1}^n\subset\mathbb{R}$ satisfies:
\begin{itemize}
\item[(a)] $0\leq a_1\leq \ldots \leq a_n$ and $0\leq b_1\leq \ldots \leq b_n$.
\item[(b)] If $n\geq2$,
$$\sum_{k=1}^p{a_k}\leq \sum_{k=1}^p{b_k},$$
for every $p\in\{1,2,\ldots,n-1\}$.
\item[(c)] $\sum_{k=1}^n{a_k}= \sum_{k=1}^n{b_k}.$
\end{itemize}
In the above situation, we obtain
$$\prod_{k=1}^n\dfrac{\Gamma(x+a_k)}{\Gamma(x+b_k)}\geq1,\quad \textrm{for every }x>0.$$
\end{theorem}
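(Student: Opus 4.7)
The plan is to recognize conditions (a)-(c) as a majorization relation and then invoke the log-convexity of the gamma function. Taking logarithms reduces the claim to the additive inequality
$$\sum_{k=1}^n \log\Gamma(x+a_k) \;\geq\; \sum_{k=1}^n \log\Gamma(x+b_k), \qquad x > 0.$$
If we reverse both sequences so that they are arranged in non-increasing order, conditions (a)-(c) say exactly that the reversed $\mathbf{a}$-vector majorizes the reversed $\mathbf{b}$-vector in the Hardy-Littlewood-Polya sense. Thus the theorem reduces to Karamata's inequality: for every convex $\varphi:[0,\infty)\to\mathbb{R}$ and every pair $(\mathbf{a},\mathbf{b})$ satisfying (a)-(c),
$$\sum_{k=1}^n \varphi(a_k) \;\geq\; \sum_{k=1}^n \varphi(b_k).$$

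I would prove Karamata's inequality by induction on the number of coordinates where $\mathbf{a}$ and $\mathbf{b}$ differ, using the classical \emph{Robin Hood} reduction. Assuming $\mathbf{a}\neq\mathbf{b}$, let $k$ be the smallest index with $a_k\neq b_k$; by (a) and (b) one must have $a_k<b_k$. Let $\ell>k$ be an index with $a_\ell>b_\ell$, which exists thanks to condition (c). Putting $\delta=\min(b_k-a_k,\,a_\ell-b_\ell)>0$, I would replace $a_k\mapsto a_k+\delta$ and $a_\ell\mapsto a_\ell-\delta$; the new multiset, after being re-sorted into non-decreasing order, still satisfies (a)-(c) with respect to $\mathbf{b}$ and agrees with $\mathbf{b}$ in strictly more coordinates. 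Two-point convexity of $\varphi$ applied to the pair $\{a_k,a_\ell\}$ yields
$$\varphi(a_k+\delta)+\varphi(a_\ell-\delta)\;\leq\;\varphi(a_k)+\varphi(a_\ell),$$
so the sum $\sum_j\varphi(a_j)$ does not increase at this step; after finitely many such transfers, $\mathbf{a}$ becomes $\mathbf{b}$ and the inequality follows.

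The theorem then follows by choosing $\varphi(y)=\log\Gamma(x+y)$, which is convex on $[0,\infty)$ for every fixed $x>0$ by the Bohr-Mollerup characterization of the gamma function, and exponentiating. The main obstacle is the combinatorial bookkeeping in the Robin Hood step: one must verify that after the transfer and subsequent re-sorting, the resulting sequence continues to satisfy the partial-sum conditions (a)-(c), and that the induction truly terminates in at most $n-1$ steps. This is a standard but delicate check; alternatively, the entire inductive argument may be bypassed by a direct appeal to Karamata's inequality as it appears in the majorization literature, after which only the log-convexity of $\Gamma$ and the identification of (a)-(c) as majorization remain.
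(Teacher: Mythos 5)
The paper contains no proof of this statement to compare against: Theorem \ref{alzer} is quoted from Alzer \cite{Alz1} (Theorem 10 and its remark) and used as a black box. Your reduction is nonetheless the standard route to results of this type and its skeleton is sound: conditions (a)--(c) say precisely that $(a_n,\ldots,a_1)$ majorizes $(b_n,\ldots,b_1)$; the map $y\mapsto\log\Gamma(x+y)$ is convex on $[0,\infty)$ for each fixed $x>0$ (most directly because $\digamma'(z)=\sum_{m\geq0}(z+m)^{-2}>0$, though Bohr--Mollerup also works); and Karamata's inequality then yields $\sum_k\log\Gamma(x+a_k)\geq\sum_k\log\Gamma(x+b_k)$, which is the claim after exponentiation. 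If you simply cite Karamata (Hardy--Littlewood--P\'olya), the argument is complete.

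The genuine defect lies in your self-contained proof of Karamata: the induction measure ``agrees with $\mathbf{b}$ in strictly more coordinates after re-sorting'' can fail. Take $a=(0,\,0.5,\,10)$ and $b=(1,\,2,\,7.5)$, which satisfy (a)--(c). Then $k=1$, the only admissible $\ell$ is $3$, and $\delta=\min(1,2.5)=1$, so the transfer produces the multiset $\{1,\,0.5,\,9\}$, which re-sorts to $(0.5,\,1,\,9)$: this agrees with $b$ in zero coordinates, exactly as before the step, because re-sorting displaced the newly created match $a_k+\delta=b_k$ away from position $k$. The transfer itself is fine (one checks $a_k+\delta\leq b_k\leq b_\ell\leq a_\ell-\delta$, so two-point convexity applies), but you should also take $\ell$ to be the \emph{smallest} index above $k$ with $a_\ell>b_\ell$, since otherwise the partial-sum condition (b) can be destroyed for indices $p$ with $k\leq p<\ell$. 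Rather than hunting for a monotone quantity that certifies termination, the cleanest repair is to abandon the Robin Hood scheme and prove Karamata by Abel summation: set $c_k=\bigl(\varphi(a_k)-\varphi(b_k)\bigr)/(a_k-b_k)$ (any subgradient value when $a_k=b_k$), note that convexity together with the common non-decreasing ordering of the two sequences makes $(c_k)$ non-decreasing, and write $\sum_k c_k(a_k-b_k)=-\sum_{p=1}^{n-1}(c_{p+1}-c_p)S_p$ with $S_p=\sum_{j\leq p}(a_j-b_j)\leq0$, whence the sum is nonnegative. That three-line computation closes the only gap in your proposal.
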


Another important function introduced bellow is the so-called Digamma function.

\begin{definition} Assume that $\digamma:\mathbb{C}\backslash\{0,-1,-2,\ldots\}\rightarrow \mathbb{C}$ denotes the analytical digamma function, which is given by the relation
$$\digamma(z):=\dfrac{\Gamma^\prime(z)}{\Gamma(z)},\quad \textrm{for every }z\in\mathbb{C}\backslash\{0,-1,-2,\ldots\}.$$
For more details see \cite{AbSt1}.\end{definition}

\begin{remark}\label{remrem1}
\begin{itemize}
\item[(a)] Digamma function is the first order derivative of the logarithm of the gamma function. More precisely,
$$\dfrac{d}{dz}\Big[\log{\Gamma(z)}\Big]=\dfrac{\Gamma^\prime(z)}{\Gamma(z)}=\digamma(z),\quad \textrm{for every } \operatorname{Re}(z)>0.$$
\item[(b)] The digamma function satisfies 
$$\digamma(z+1)=\dfrac{1}{z}+\digamma(z),\quad \textrm{for every }\operatorname{Re}(z)>0.$$
\end{itemize}
\end{remark}

\subsection{A small survey on fractional calculus of vectorial functions}\label{secfrac}

There are several fractional derivatives defined along the history (see \cite{MaKiMa1} for details). Nevertheless, among all these derivatives, in this work we only address the ones which nowadays are attributed to Riemann, Liouville and Caputo.

In order to establish a concise notation, throughout this section assume that $I\subset\mathbb{R}$ denotes a non-empty interval (bounded or unbounded) and $X$ a Banach space. Assume, for $1\leq p\leq \infty$, that $L^p(I;X)$ denotes the function space composed of all measurable functions (in Bochner's sense) $f:I\rightarrow X$, such that
\begin{itemize}
\item[(a)] $\|f(t)\|^p_X$ is integrable in $I$,\quad if $1\leq p<\infty$;
\item[(b)] $\esup_{t\in I}\|f(t)\|_X<\infty$,\quad if $p=\infty$.
\end{itemize}
This previous sets imbued with the respective norms
\begin{itemize}
\item[(a)] $\|f\|_{L^p(I;X)}=\displaystyle\int_{I}{\|f(s)\|^p_X}\,ds$,\quad if $1\leq p<\infty$, and
\item[(b)] $\|f\|_{L^\infty(I;X)}=\esup_{t\in I}\|f(t)\|_X$,\quad if $p=\infty$,
\end{itemize}
are Banach spaces.

The above function spaces are enough for us to define the following classical integral from the theory of fractional calculus.

\begin{definition}\label{riemannint} Let $\alpha\in(0,\infty)$, $t_0<t_1$ be fixed real numbers and $f\in L^1(t_0,t_1;X)$. The Riemann-Liouville fractional integral of order $\alpha$ at $t_0$ is defined by
\begin{equation*}J_{t_0,t}^\alpha f(t):=\dfrac{1}{\Gamma(\alpha)}\displaystyle\int_{t_0}^{t}{(t-s)^{\alpha-1}f(s)}\,ds,\quad \textrm{for almost every }t\in [t_0,t_1].\end{equation*}
\end{definition}

It is worth to pointing out that if we consider Gel'fand-Shilov function $g_\beta:\mathbb{R}\rightarrow\mathbb{R}$, for each fixed $\beta>0$, given by
$$g_\beta(t):=\left\{\begin{array}{cl} t^{\beta-1}/\Gamma(\beta),&t>0,\\
0,&t\leq0,\end{array}\right.$$
(for further information see \cite[Chapter 3]{GeSh1}) and let $f$ be equal to zero outside $[t_0,t_1]$, then we conclude that
$$J_{t_0,t}^\alpha f(t)=\big(g_{\alpha}*f\big)(t),\quad \textrm{for almost every }t\in[t_0,t_1].$$

With the purpose of discussing a broader definition to Riemann-Liouville and Caputo fractional derivatives, we consider the following function spaces:
\begin{itemize}
\item[(a)] If $n\in\mathbb{N}$, we define $C^{n}(I;X)$ as being the space composed of every $n$-times continuously differentiable function $f:I\rightarrow X$. If $I$ is a compact set, by introducing the norm
$$\|f\|_{C^{n}(I;X)}:=\left({\,\sum_{k=0}^n{\left[\sup_{t\in I}{\|f^{(k)}(t)\|_X}\right]^2}}\,\right)^{1/2},$$
it becomes a Banach space. Above $f^{(k)}(t)$ denotes the standard $k$-times derivative of $f(t)$. We also assume that $C^{\infty}(I;X)$ symbolizes the function space given by $\bigcap_{n=0}^\infty C^{n}(I;X)$.

\item[(b)] For $n\in\mathbb{N}^*$ and $1\leq p\leq \infty$, we define:
\begin{multline*}W^{n,p}(I;X):=\Big\{f\in L^p(I;X):f^{(k)}\textrm{ exists in the weak sense}\\
 \textrm{and belongs to }L^p(I;X),\textrm{ for every }k\in\{1,2,\ldots,n\}\Big\}.\end{multline*}
By considering $W^{n,p}(I;X)$ with norm
$$\|f\|_{W^{n,p}(I;X)}:=\left\{\begin{array}{ll}\left({\,\displaystyle\sum_{k=0}^n\big\|f^{(k)}\big\|^p_{L^p(I;X)}}\,\right)^{1/p},&\textrm{if }1\leq p<\infty,\\\\
\displaystyle\sum_{k=0}^n\big\|f^{(k)}\big\|_{L^\infty(I;X)}\,\,\,\,\,,&\textrm{if }p=\infty,\end{array}\right.$$
it becomes a Banach space.
\end{itemize}

From now on, for any $\alpha\in(0,\infty)$ we use the symbol $[\alpha]$ to denote the smallest integer that is greater or equal then $\alpha$.

\begin{definition}\label{riemannder} Let $\alpha\in(0,\infty)$ and $t_0<t_1$ be fixed real numbers. Assume that function $f\in L^1(t_0,t_1;X)$ and $(g_{[\alpha]-\alpha}*f)\in W^{[\alpha],1}(t_0,t_1;X)$. The Riemann-Liouville fractional derivative of order $\alpha$ at $t_0$ is given by
\begin{equation}\label{identitycaputo}
D_{t_0,t}^\alpha f(t):=\dfrac{d^{[\alpha]}}{dt^{[\alpha]}}\Big[g_{[\alpha]-\alpha}*f\Big](t),\quad\textrm{for almost every }t\in [t_0,t_1].
\end{equation}
Here $(d^{[\alpha]}/dt^{[\alpha]})$ is taken in the weak sense.
\end{definition}

\begin{remark} Observe that equality \eqref{identitycaputo} can be reinterpreted as
$$
D_{t_0,t}^\alpha f(t)=\dfrac{d^{[\alpha]}}{dt^{[\alpha]}}\left\{\dfrac{1}{\Gamma\big([\alpha]-\alpha\big)}\displaystyle\int_{t_0}^{t}{(t-s)^{[\alpha]-\alpha-1}f(s)}\,ds\right\},\quad \textrm{for almost every }t\in [t_0,t_1].
$$
\end{remark}

In spite of the fact that differential equations with Riemann-Liouville fractional derivative can be analyzed and solved (for classical surveys on this subject see {\cite{Po1,SaKiMa1}), it requires special initial conditions given in form of convolutions with the Gel'fand-Shilov function, and in general this kind of behavior lacks a clear physical interpretation.

Therefore, as an attempt to meet the requirements of physical reality, Caputo in his famous paper \cite{Ca1} reformulated the Riemann-Liouville fractional derivative, and nowadays in a more sophisticated and general version, several researchers study the Caputo fractional derivative which is completely describe below.

\begin{definition}\label{caputo} Assume that $\alpha\in(0,\infty)$ and consider $t_0<t_1$ fixed real numbers. If $f\in C^{[\alpha]-1}([t_0,t_1];X)$ and $(g_{[\alpha]-\alpha}*f)\in W^{[\alpha],1}(t_0,t_1;X)$, the Caputo fractional derivative of order $\alpha$ at $t_0$ is given by
$$cD_{t_0,t}^\alpha f(t):=D_{t_0,t}^\alpha\left[f(t)-\sum_{k=0}^{[\alpha]-1}\frac{f^{(k)}(t_0)}{k!}\big(t-t_0\big)^{k}\right],\quad \textrm{for almost every }t\in [t_0,t_1],$$
where $D_{t_0,t}^\alpha$ denotes the Riemann-Liouville fractional derivative of order $\alpha$ at $t_0$.
\end{definition}

\begin{remark}\label{caputo1} In the science areas where fractional calculus is applied, it is standard to assume that the domain of the Caputo fractional derivative of order $\alpha\in(0,1)$ is $AC([t_0,t_1],X)$, which denotes the space of every absolutely continuous function from $[t_0,t_1]$ to $X$. In this case we are able to prove, by straightforward computations (see \cite{Car1,KiSrTr1}), that
$$cD_{t_0,t}^\alpha f(t)=J_{t_0,t}^{1-\alpha}f^\prime(t),\quad \textrm{for almost every }t\in [t_0,t_1].$$

However, in order to address a more general result we avoid this kind of particularization (cf. Definition \ref{caputo}). In fact, recall that Morrey's Inequality to vector-valued functions ensures that
$$AC([t_0,t_1],X)\equiv W^{1,1}(t_0,t_1;X).$$
Nevertheless, the broader domain of Caputo fractional derivative of order $\alpha\in(0,1)$ contains several other functions besides $W^{1,1}(t_0,t_1;X)$; for instance, the Weierstrass function $w(t)$ is continuous in $[0,1]$, does not have weak derivative in $[0,1]$ and also satisfies $g_{1-\alpha}*w\in W^{1,1}(0,1;\mathbb{R})$. The proof of this fact can be found in \cite{RoWe1,RoSaLo1,We1}.
\end{remark}

There are several important properties concerning the two fractional derivatives introduced above, however in the following result we present just the ones that are referred in this manuscript.

\begin{proposition}\label{properties} Let $\alpha,\gamma\in(0,1)$, $t_0<t_1$ be fixed real numbers and consider functions $f,\tilde{f}\in L^1(t_0,t_1;X)$ and $h,\tilde{h}\in C^0([t_0,t_1];X)$. Then the following statements are true.
    \begin{itemize}
    \item[(a)] For Riemann-Liouville fractional integral:
    \begin{itemize}
    \item[(i)] given $\lambda,\mu\in\mathbb{R}$, it holds that
    $$J_{t_0,t}^{{\alpha}}\left[\lambda f+\mu\tilde{f}\right](t)=
    \lambda\, \left[J_{t_0,t}^{{\alpha}}f(t)\right]+\mu\, \left[J_{t_0,t}^{{\alpha}}\tilde{f}(t)\right],\quad \textrm{for almost every }t\in [t_0,t_1];$$
    \item [(ii)]  $J_{t_0,t}^{{\alpha}}\left[J_{t_0,t}^{{\gamma}}f(t)\right]=J_{t_0,t}^{\alpha+\gamma}f(t),\quad \textrm{for almost every }t\in [t_0,t_1].$\vspace*{0.2cm}
    \end{itemize}
    \item[(b)] For Riemann-Liouville fractional derivative:
    \begin{itemize}
    \item [(iii)] if functions $g_{1-{\alpha}}*f$, $g_{1-{\alpha}}*\tilde{f}$ belongs to $W^{1,1}(t_0,t_1;X)$ and $\lambda,\mu\in\mathbb{R}$, then
    $$D_{t_0,t}^{{\alpha}}\left[\lambda f+\mu\tilde{f}\right](t)=
    \lambda\left[D_{t_0,t}^{{\alpha}}f(t)\right]+\mu\left[D_{t_0,t}^{{\alpha}}\tilde{f}(t)\right],\quad \textrm{for almost every }t\in [t_0,t_1];$$
    \item [(iv)] $D_{t_0,t}^{{\alpha}}\left[J_{t_0,t}^{{\alpha}}f(t)\right]=f(t),\quad \textrm{for almost every }t\in [t_0,t_1];$\vspace*{0.2cm}
    \item [(v)] If $g_{1-{\alpha}}*f\in W^{1,1}(t_0,t_1;X)$, then
    $$J_{t_0,t}^{{\alpha}}\left[D_{t_0,t}^{{\alpha}}f(t)\right]=f(t)-\dfrac{1}{\Gamma({\alpha})}(t-t_0)^{{\alpha}-1}\left\{J^{1-{\alpha}}_{t_0,s}f(s)\right\}\Big|_{s=t_0},\quad \textrm{for almost every }t\in [t_0,t_1].$$
    Moreover, if there exists $\varphi\in L^1(t_0,t_1;X)$ such that $f(t)=J_{t_0,t}^{{\alpha}}\varphi(t)$, for almost every $t\in[t_0,t_1]$, or $f\in C^0([t_0,t_1];X)$, then
    $$J_{t_0,t}^{{\alpha}}D_{t_0,t}^{{\alpha}}f(t)=f(t),\quad \textrm{for almost every }t\in [t_0,t_1].$$
    \end{itemize}
    \item[(c)] For Caputo fractional derivative:
    \begin{itemize}
    \item [(vi)] if functions $g_{1-{\alpha}}*h$, $g_{1-{\alpha}}*\tilde{h}$ belongs to $W^{1,1}(t_0,t_1;X)$ and $\lambda,\mu\in\mathbb{R}$, then
    $$cD_{t_0,t}^{{\alpha}}\left[\lambda h+\mu\tilde{h}\right](t)=
    \lambda\left[cD_{t_0,t}^{{\alpha}}h(t)\right]+\mu\left[cD_{t_0,t}^{{\alpha}}\tilde{h}(t)\right],\quad \textrm{for almost every }t\in [t_0,t_1];$$
    \item [(vii)] $cD_{t_0,t}^{{\alpha}}\left[J_{t_0,t}^{{\alpha}}h(t)\right]=h(t),\quad \textrm{for almost every }t\in [t_0,t_1];$
    \item [(viii)] If $g_{1-{\alpha}}*h\in W^{1,1}(t_0,t_1;X)$, then
    $$J_{t_0,t}^{{\alpha}}\left[cD_{t_0,t}^{{\alpha}}h(t)\right]=h(t)-h(t_0),\quad \textrm{for almost every }t\in [t_0,t_1].$$
    \end{itemize}
    \end{itemize}
\end{proposition}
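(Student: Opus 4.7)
The unifying device is the convolution representation $J_{t_0,t}^{\alpha}f = g_{\alpha}*f$ (extending $f$ by zero outside $[t_0,t_1]$), together with the semigroup identity $g_{\alpha}*g_{\gamma}=g_{\alpha+\gamma}$, which is just an encoding of the Beta integral $B(\alpha,\gamma)=\Gamma(\alpha)\Gamma(\gamma)/\Gamma(\alpha+\gamma)$. The linearity statements (i), (iii), (vi) are immediate from linearity of the Bochner integral, of convolution with a fixed kernel, and of the distributional derivative, combined with Definitions \ref{riemannder} and \ref{caputo}. For (ii), I would write the iterated integral $J_{t_0,t}^{\alpha}J_{t_0,t}^{\gamma}f(t)$ as a double integral over $\{t_0\le r\le s\le t\}$, apply Fubini, reduce the inner integral by substitution $s=r+(t-r)u$ to the Beta function, and identify the result with $J_{t_0,t}^{\alpha+\gamma}f(t)$.

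For (iv), the semigroup relation gives $g_{1-\alpha}*g_{\alpha}*f = g_{1}*f$, which is just the Bochner primitive of $f$; its weak derivative is $f$, so $D_{t_0,t}^{\alpha}J_{t_0,t}^{\alpha}f=f$ a.e. The main obstacle is the general version of (v). Setting $F:=g_{1-\alpha}*f\in W^{1,1}(t_0,t_1;X)$, one has $D_{t_0,t}^{\alpha}f = F'$ in the weak sense, and the plan is to compute
\begin{equation*}
J_{t_0,t}^{\alpha}D_{t_0,t}^{\alpha}f(t) \;=\; \bigl(g_{\alpha}*F'\bigr)(t)
\end{equation*}
by integration by parts against the $W^{1,1}$ function $F$, which produces a boundary contribution $g_{\alpha}(t-t_0)F(t_0)=(t-t_0)^{\alpha-1}\{J^{1-\alpha}_{t_0,s}f(s)\}\bigr|_{s=t_0}/\Gamma(\alpha)$ and an interior term that, after a second application of Fubini and the semigroup identity (ii), collapses to $f(t)$. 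Care is needed because $g_{\alpha}$ is merely integrable near the origin when $\alpha<1$, so integration by parts must be carried out by approximating $g_{\alpha}$ by smooth truncations $g_{\alpha}^{\varepsilon}$ and passing to the limit, using the $W^{1,1}$ control on $F$ to justify the limit of the boundary term.

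The two special cases in (v) are then easy corollaries. If $f=J_{t_0,t}^{\alpha}\varphi$, then by (ii) $g_{1-\alpha}*f = g_{1}*\varphi$, which vanishes at $t_0$, so the boundary term disappears. If instead $f\in C^{0}$, one estimates
\begin{equation*}
\bigl\|J_{t_0,t}^{1-\alpha}f(t)\bigr\|_{X}\;\le\;\frac{\|f\|_{C^{0}([t_0,t_1];X)}}{\Gamma(2-\alpha)}(t-t_0)^{1-\alpha}\;\longrightarrow\;0 \quad\text{as } t\to t_0^{+},
\end{equation*}
so the boundary term again vanishes and the simplified identity holds.

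Finally, (vii) and (viii) drop out of (iv) and (v) via the Caputo definition. For (vii), continuity of $h$ gives $J_{t_0,t}^{\alpha}h(t_0)=0$ by the same estimate as above (with $\alpha$ in place of $1-\alpha$), so $[\,J_{t_0,t}^{\alpha}h\,]^{(0)}(t_0)=0$ and the Caputo correction in Definition \ref{caputo} is absent; then $cD_{t_0,t}^{\alpha}J_{t_0,t}^{\alpha}h = D_{t_0,t}^{\alpha}J_{t_0,t}^{\alpha}h = h$ by (iv). For (viii), apply (v) to $\tilde{h}(t):=h(t)-h(t_0)$: $\tilde{h}$ is continuous with $\tilde{h}(t_0)=0$, so the boundary term in (v) again vanishes and $J_{t_0,t}^{\alpha}D_{t_0,t}^{\alpha}\tilde{h}=\tilde{h}$, which is precisely $J_{t_0,t}^{\alpha}cD_{t_0,t}^{\alpha}h(t)=h(t)-h(t_0)$. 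The expected main technical obstacle is thus the justification of the integration by parts in (v) against the weakly differentiable convolution $F$, with a mildly singular kernel $g_{\alpha}$.
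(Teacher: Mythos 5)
The paper does not actually prove this proposition: its ``proof'' is a one-line citation to \cite{Car1,KiSrTr1,Po1}, so there is no in-paper argument to compare against. Your sketch reconstructs the standard proofs from those references and is essentially sound: the linearity claims and the Fubini--Beta computation for (ii) are routine; (iv) is exactly $g_{1-\alpha}*g_{\alpha}*f=g_{1}*f$ followed by weak differentiation; and your reductions of the special cases of (v), of (vii), and of (viii) to the vanishing of the boundary term are correct (for (viii) you should also note that $g_{1-\alpha}*(h-h(t_0))=g_{1-\alpha}*h-h(t_0)\,g_{2-\alpha}(\cdot-t_0)$ still lies in $W^{1,1}$, so (v) is applicable). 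The one step where your plan is more delicate than it needs to be is the core identity in (v): integrating $\int_{t_0}^{t}g_{\alpha}(t-s)F'(s)\,ds$ by parts produces a divergent boundary term $g_{\alpha}(0^{+})F(t)$ cancelling against a non-integrable kernel derivative $g_{\alpha}'\sim r^{\alpha-2}$, so the truncation-and-limit argument you gesture at requires genuine care. The cleaner standard route avoids this entirely: since $F:=g_{1-\alpha}*f\in W^{1,1}$ one has $F=F(t_0)+g_{1}*F'$, hence
\begin{equation*}
g_{1}*f \;=\; g_{\alpha}*F \;=\; F(t_0)\,g_{\alpha+1}(\cdot-t_0)+g_{\alpha+1}*F',
\end{equation*}
and differentiating (using $g_{\alpha+1}'=g_{\alpha}$ and $g_{\alpha+1}(0)=0$) gives $g_{\alpha}*F'=f-F(t_0)\,g_{\alpha}(\cdot-t_0)$ almost everywhere, which is precisely the claimed formula. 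I would recommend replacing the integration-by-parts step with this decomposition; with that substitution your argument is complete.
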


\begin{proof} We refer to \cite{Car1,KiSrTr1,Po1}.
\end{proof}

There are some examples, which are used forward in this manuscript, that should be evidenced here. For this purpose, assume that $\alpha\in(0,1)$, $k\in\mathbb{N}$, $t_0<t_1$ are fixed real numbers and that $\vartheta:\mathbb{R}\rightarrow\mathbb{R}$ is the function
    $$\vartheta(t):=(t-t_0)^{k}.$$
\begin{itemize}
\item[(a)] Using the definition of the Riemann-Liouville fractional derivative, we obtain
\begin{equation}\label{continha1}D_{t_0,t}^\alpha \vartheta(t)=\left\{\begin{array}{cll}\dfrac{\Gamma(k+1)}{\Gamma(k+1-\alpha)}(t-t_0)^{k-\alpha},& \textrm{for every }t\in [t_0,t_1],&\textrm{if } k>0,\vspace*{0.2cm}\\ \dfrac{1}{\Gamma(1-\alpha)}(t-t_0)^{-\alpha},& \textrm{for every }t\in (t_0,t_1],&\textrm{if } k=0.\end{array}\right.\end{equation}

\item[(b)] Like before, using the definition of Caputo fractional derivative we achieve
\begin{equation}\label{continha2}cD_{t_0,t}^\alpha \vartheta(t)=\left\{\begin{array}{cll}\dfrac{\Gamma(k+1)}{\Gamma(k+1-\alpha)}(t-t_0)^{k-\alpha},& \textrm{for every }t\in [t_0,t_1],&\textrm{if } k>0,\vspace*{0.2cm}\\0,& \textrm{for every }t\in [t_0,t_1],&\textrm{if } k=0.\end{array}\right.\end{equation}
\end{itemize}

\begin{remark}\label{caputo1supernovo} By analyzing the definitions of the fractional derivatives addressed here, observe that for functions $f\in C^{0}([t_0,t_1];X)$ such that $g_{1-\alpha}*f\in W^{1,1}(t_0,t_1;X)$ it always hold
$$cD_{t_0,t}^{\alpha}f(t)=D_{t_0,t}^{\alpha}f(t)-\dfrac{(t-t_0)^{-\alpha}}{\Gamma(1-\alpha)}f(t_0)\quad \textrm{for almost every }t\in (t_0,t_1].$$
This identity is recurrently used in this manuscript.
\end{remark}

\section{Matrix analysis and first results}
\label{matrixtheory}

Taking into account the considerations presented so far, we now state Theorem \ref{polinomio} which is our first main result.

\begin{theorem}\label{polinomio} Consider $\alpha\in(0,1)$, $t_0\in\mathbb{R}$ and $P:\mathbb{R}\rightarrow\mathbb{R}$ a polynomial function with real coefficients. Then we have
\begin{equation}\label{des1111}
D_{t_0,t}^\alpha\big[P(t)\big]^2\leq2\Big[D_{t_0,t}^\alpha P(t)\Big]P(t),\quad \textrm{for every }t>t_0,
\end{equation}
and
\begin{equation}\label{des2222}
 cD_{t_0,t}^\alpha\big[P(t)\big]^2\leq2\Big[cD_{t_0,t}^\alpha P(t)\Big]P(t),\quad \textrm{for every }t>t_0.
\end{equation}
\end{theorem}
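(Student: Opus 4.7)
My plan avoids the matrix-theoretic route sketched in the introduction and instead derives both inequalities from a single pointwise identity obtained via one integration by parts. Since any real polynomial $P$ is $C^{\infty}$ (hence absolutely continuous on every compact interval), Remark \ref{caputo1} applies and gives $cD_{t_0,t}^\alpha P(t) = J_{t_0,t}^{1-\alpha} P'(t)$. The idea is to first prove \eqref{des2222} with an \emph{explicit non-negative remainder} and then deduce \eqref{des1111} by passing from $cD_{t_0,t}^\alpha$ back to $D_{t_0,t}^\alpha$ through Remark \ref{caputo1supernovo}.

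For the Caputo case, combining the integral representations of $cD_{t_0,t}^\alpha P(t)$ and of $cD_{t_0,t}^\alpha [P(t)]^2$ (using $(P^2)'=2PP'$) gives
\begin{equation*}
2\bigl[cD_{t_0,t}^\alpha P(t)\bigr] P(t) - cD_{t_0,t}^\alpha \bigl[P(t)\bigr]^2 = \frac{2}{\Gamma(1-\alpha)} \int_{t_0}^t (t-s)^{-\alpha} \bigl[P(t) - P(s)\bigr] P'(s)\, ds.
\end{equation*}
I would then use the identity $[P(t) - P(s)] P'(s) = -\tfrac{1}{2}\tfrac{d}{ds}(P(t) - P(s))^2$ and integrate by parts in $s$: the boundary contribution at $s = t$ vanishes because $(P(t) - P(s))^2 = O((t-s)^2)$ dominates the weak singularity $(t-s)^{-\alpha}$, while the boundary at $s = t_0$ produces $(P(t) - P(t_0))^2(t-t_0)^{-\alpha}$. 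The result is the pointwise identity
\begin{equation*}
2\bigl[cD_{t_0,t}^\alpha P(t)\bigr] P(t) - cD_{t_0,t}^\alpha \bigl[P(t)\bigr]^2 = \frac{\bigl[P(t) - P(t_0)\bigr]^2}{\Gamma(1-\alpha)(t-t_0)^\alpha} + \frac{\alpha}{\Gamma(1-\alpha)} \int_{t_0}^t \frac{\bigl[P(t) - P(s)\bigr]^2}{(t-s)^{\alpha+1}}\, ds,
\end{equation*}
which is manifestly non-negative, establishing \eqref{des2222}.

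For the Riemann-Liouville case, Remark \ref{caputo1supernovo} applied to $f = P$ and to $f = P^2$ gives
\begin{equation*}
2\bigl[D_{t_0,t}^\alpha P(t)\bigr] P(t) - D_{t_0,t}^\alpha \bigl[P(t)\bigr]^2 = \Bigl(2\bigl[cD_{t_0,t}^\alpha P(t)\bigr] P(t) - cD_{t_0,t}^\alpha \bigl[P(t)\bigr]^2\Bigr) + \frac{2 P(t) P(t_0) - [P(t_0)]^2}{\Gamma(1-\alpha)(t-t_0)^\alpha}.
\end{equation*}
Substituting the previous identity and using the algebraic fact $[P(t) - P(t_0)]^2 + 2 P(t) P(t_0) - [P(t_0)]^2 = [P(t)]^2$ yields
\begin{equation*}
2\bigl[D_{t_0,t}^\alpha P(t)\bigr] P(t) - D_{t_0,t}^\alpha \bigl[P(t)\bigr]^2 = \frac{\bigl[P(t)\bigr]^2}{\Gamma(1-\alpha)(t-t_0)^\alpha} + \frac{\alpha}{\Gamma(1-\alpha)} \int_{t_0}^t \frac{\bigl[P(t) - P(s)\bigr]^2}{(t-s)^{\alpha+1}}\, ds \geq 0,
\end{equation*}
which proves \eqref{des1111}. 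The main technical point is the rigorous justification of the integration by parts, namely showing that the boundary term at $s = t$ vanishes and that the singular integral $\int_{t_0}^t (t-s)^{-\alpha-1}[P(t) - P(s)]^2\, ds$ converges; both facts follow immediately for polynomials from the smooth-function bound $(P(t) - P(s))^2 = O((t-s)^2)$, but they should be recorded carefully so that the pointwise identities hold for every $t > t_0$.
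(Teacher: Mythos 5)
Your proof is correct, but it follows a genuinely different route from the paper. The paper expands $P$ in powers of $(t-t_0)$, reduces both inequalities to the positive definiteness of the matrices $\mathcal{A}=\big(\psi(i,j)\big)$ and $\mathcal{B}$, and proves that by induction using Schur complements, Alzer's gamma-function inequality and a digamma monotonicity argument; the Riemann--Liouville inequality \eqref{des1111} is then deduced from the Caputo one \eqref{des2222} via the bordered-matrix Corollary \ref{auxteorema}. You instead run the Alikhanov-type integration by parts (Theorem \ref{alikhanov}, \cite{Al1}) directly --- legitimate here since polynomials are absolutely continuous, so Remark \ref{caputo1} applies to both $P$ and $P^2$ --- and then pass to the Riemann--Liouville case through Remark \ref{caputo1supernovo} together with the algebraic identity $[P(t)-P(t_0)]^2+2P(t)P(t_0)-[P(t_0)]^2=[P(t)]^2$. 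I checked your two displayed identities and the boundary analysis at $s=t$ and $s=t_0$; they are all correct, and the singular integral converges because the integrand is $O\big((t-s)^{1-\alpha}\big)$. What your approach buys: it is much shorter, and the explicit non-negative remainders immediately recover the strictness statements of Remark \ref{esperoqueultimo} (the Riemann--Liouville gap is at least $[P(t)]^2/\big(\Gamma(1-\alpha)(t-t_0)^\alpha\big)$, so \eqref{des1111} is strict unless $P\equiv 0$, while the Caputo gap vanishes exactly for constant $P$). What the paper's approach buys: it is deliberately independent of the kernel representation $cD^\alpha=J^{1-\alpha}\circ\tfrac{d}{dt}$ --- the authors explicitly note that Alikhanov's result plus Corollary \ref{ultiultiultiulti} would already suffice, but they want a combinatorial/matrix proof that they conjecture extends to other fractional derivatives lacking a completely monotone kernel. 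Your argument, by contrast, is tied to the specific integral form of the Caputo derivative. As a proof of Theorem \ref{polinomio} itself, and as a foundation for the density arguments in Section \ref{maintheorem} (which use only the statement, not the proof), your version is fully adequate.
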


Since the goal of this section is to prove Theorem \ref{polinomio}, in what follows we introduce all the results and technicalities that are needed to completely address it.

\subsection{Auxiliary results}

The proof of the aforementioned theorem is motivated by several matrix results. This is why we begin by introducing the following notion.

\begin{definition} For $m\in\mathbb{N}^*$, a symmetric matrix $M\in M^{m}(\mathbb{R})$ is said to be positive definite when
$$\big(x, Mx\big)>0,\quad \textrm{for every }x\in \mathbb{R}^m\setminus\{0\}.$$
Above, the symbol $(\cdot\,,\cdot)$ denotes the standard inner product of $\mathbb{R}^m$.\end{definition}

Now fix $n\in\mathbb{N}$, with $n\geq2$, and consider the partitioned matrix $B\in M^{n+1}(\mathbb{R})$, given by
\begin{equation}\label{eq10}
 B=\left[
  \begin{array}{cc}
    d & e^{T} \\
    e & A \\
  \end{array}
\right],
\end{equation}
where $A\in M^{n}(\mathbb{R})$ is a symmetric matrix, $e=(e_1,e_2,\ldots,e_n)\in \mathbb{R}^n$, which is viewed as a column matrix, $e^{T}$ a row matrix given by the transpose of $e$ and $d\in\mathbb{R}$.

By taking into account Schur complement theory, we may address the following result regarding the partitioned matrix  $B$ introduced above (see \cite[Theorem 7.7.6]{HoJo1} as a classical reference for this result).

\begin{theorem}\label{matrix01} Assume that $B\in M^{n+1}(\mathbb{R})$ is given by \eqref{eq10}, with $d\not=0$, and define matrix
$$E=\left[\begin{array}{cccc}\dfrac{e_1e_1}{d}&\dfrac{e_1e_2}{d}&\ldots&\dfrac{e_1e_n}{d}\vspace{0.3cm}\\
\dfrac{e_2e_1}{d}&\dfrac{e_2e_2}{d}&\ldots&\dfrac{e_2e_n}{d}\vspace{0.3cm}\\
\vdots&\vdots&\ddots&\vdots\vspace{0.3cm}\\
\dfrac{e_ne_1}{d}&\dfrac{e_ne_2}{d}&\ldots&\dfrac{e_ne_n}{d}\end{array}\right].$$
Then $B$ is positive definite if, and only if, $d>0$ and $D:=A-E$ is a positive definite matrix.
\end{theorem}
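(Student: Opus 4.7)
The approach I would take is to reduce the question to a simple \emph{complete the square} identity on the quadratic form $(v,Bv)$, exploiting the assumption $d\neq 0$. Writing a generic vector of $\mathbb{R}^{n+1}$ as $v=(x_0,x)^T$ with $x_0\in\mathbb{R}$ and $x=(x_1,\ldots,x_n)^T\in\mathbb{R}^n$, the definition of $B$ gives
\begin{equation*}
(v,Bv) \;=\; d\,x_0^2 + 2\,x_0\,(e,x) + (x,Ax).
\end{equation*}
Since $d\neq 0$, the first two terms can be absorbed into a single perfect square, leading to the key identity
\begin{equation*}
(v,Bv) \;=\; d\left(x_0 + \frac{(e,x)}{d}\right)^{\!2} + (x,Ax) - \frac{(e,x)^2}{d}.
\end{equation*}
Observing that $(e,x)^2 = (x,ee^T x)$ and that the matrix $E$ in the statement is precisely $ee^T/d$, the last two terms rearrange to $(x,Dx)$ with $D=A-E$, so one obtains the clean formula
\begin{equation*}
(v,Bv) \;=\; d\left(x_0 + \frac{(e,x)}{d}\right)^{\!2} + (x,Dx).
\end{equation*}

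From this identity both implications follow immediately. For the ``only if'' direction, assuming $B$ is positive definite, I would evaluate on $v=(1,0,\ldots,0)^T$ to conclude $d>0$, and then for a fixed $x\in\mathbb{R}^n\setminus\{0\}$ I would choose $x_0=-(e,x)/d$ so that $v=(x_0,x)\neq 0$ and the perfect square term vanishes; positive definiteness of $B$ then forces $(x,Dx)>0$, and hence $D$ is positive definite. For the ``if'' direction, assuming $d>0$ and $D$ positive definite, the identity expresses $(v,Bv)$ as a sum of two nonnegative terms, so $(v,Bv)\geq 0$ always; equality would force both $(x,Dx)=0$ and $x_0+(e,x)/d=0$, and the former gives $x=0$ (by positive definiteness of $D$), which pushed into the latter gives $x_0=0$, contradicting $v\neq 0$.

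I do not foresee any serious obstacle: the entire argument rests on the algebraic identity above, and the symmetry of $A$ (hence of $E$ and $D$) is used only to guarantee that ``positive definite'' makes sense in the usual symmetric sense. The only care needed is to make sure that the vector $v=(x_0,x)^T$ chosen in the forward implication is genuinely nonzero, which is automatic once $x\neq 0$. An alternative, equally short presentation would be to perform the congruence transformation $L^T B L$ with
\begin{equation*}
L=\begin{pmatrix} 1 & -e^T/d \\ 0 & I_n \end{pmatrix},
\end{equation*}
verifying directly that $L^T B L=\mathrm{diag}(d,D)$; since $L$ is invertible and congruence preserves positive definiteness, the equivalence is immediate, and this is essentially just the matrix reformulation of the completing-the-square computation above.
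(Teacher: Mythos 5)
Your proof is correct. Note that the paper does not actually prove this theorem: it is stated as a known consequence of Schur complement theory with a citation to Horn--Johnson, so there is nothing in the text to compare against step by step. Your completing-the-square identity $(v,Bv)=d\left(x_0+\tfrac{(e,x)}{d}\right)^2+(x,Dx)$, together with the observation that $E=ee^{T}/d$, is precisely the standard proof of that classical criterion, and both directions of the equivalence are handled cleanly (in particular you correctly ensure the test vector $(-(e,x)/d,\,x)$ is nonzero when $x\neq 0$). The congruence reformulation $L^{T}BL=\mathrm{diag}(d,D)$ you sketch at the end is an equivalent and equally valid packaging of the same computation. The only cosmetic remark is that the symmetry of $A$ (hence of $B$, $E$, $D$), which the paper assumes in the setup of \eqref{eq10}, is what makes the cross terms combine into $2x_0(e,x)$ and makes ``positive definite'' meaningful in the symmetric sense; you acknowledge this, so there is no gap.
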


In the following results our objective is to ensure enough conditions to verify that $D$, defined in the above theorem, is indeed a positive definite matrix. Thus, from this moment on, we use recursively the notations introduced by \eqref{eq10} and Theorem \ref{matrix01}.

\begin{theorem}\label{matrix02} If $d\in\mathbb{R}\setminus\{0\}$, $e\in\mathbb{R}^n\setminus\{0\}$ and $(e,Ae)>0$, then %
$$\mathbb{R}^n=\operatorname{span}(e)\oplus\operatorname{span}(Ae)^\perp=\operatorname{span}(e)\oplus \operatorname{Ker}E.$$
\end{theorem}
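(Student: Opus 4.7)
The plan is to analyze the rank-one matrix $E$ directly and then use an orthogonality/dimension argument for the first decomposition.

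First I would observe that $E$ is simply the rank-one outer product $E = \tfrac{1}{d}\, e e^{T}$, so that for every $x \in \mathbb{R}^n$ one has the explicit formula
$$Ex = \frac{(e,x)}{d}\, e.$$
Since $e \neq 0$ and $d \neq 0$, this gives at once $\operatorname{Ker} E = \{x \in \mathbb{R}^n : (e,x) = 0\} = \operatorname{span}(e)^{\perp}$. The second equality in the statement, namely $\mathbb{R}^n = \operatorname{span}(e) \oplus \operatorname{Ker} E$, then follows from the standard orthogonal decomposition associated with the nonzero vector $e$.

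For the first decomposition, I would first note that the hypothesis $(e, Ae) > 0$ forces $Ae \neq 0$, so $\operatorname{span}(Ae)^{\perp}$ is a hyperplane of dimension $n-1$. To see the sum is direct, take $x \in \operatorname{span}(e) \cap \operatorname{span}(Ae)^{\perp}$; writing $x = \lambda e$ and using $(Ae, x) = 0$ yields $\lambda (Ae, e) = 0$, and since $(e, Ae) > 0$ we conclude $\lambda = 0$, hence $x = 0$. A dimension count ($1 + (n-1) = n$) then promotes the trivial intersection to the direct-sum decomposition $\mathbb{R}^n = \operatorname{span}(e) \oplus \operatorname{span}(Ae)^{\perp}$.

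There is no serious obstacle here: the whole argument reduces to recognising that $E$ is rank one, combined with the observation that $(e, Ae) > 0$ prevents $e$ from lying in $\operatorname{span}(Ae)^{\perp}$. I would present the proof in the order above, first identifying $\operatorname{Ker} E$ with $\operatorname{span}(e)^{\perp}$ (which also clarifies, for later use in the section, that $E$ is the orthogonal projection onto $\operatorname{span}(e)$ up to the scalar $(e,e)/d$), and then handling the $\operatorname{span}(Ae)^{\perp}$ decomposition via the intersection-plus-dimension argument.
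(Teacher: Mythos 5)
Your proposal is correct and follows essentially the same route as the paper: both rest on the explicit rank-one formula $Ex=\tfrac{(e,x)}{d}e$ to identify $\operatorname{Ker}E$ with $\operatorname{span}(e)^{\perp}$, and both use the fact that $(e,Ae)>0$ keeps $e$ out of the hyperplane $\operatorname{span}(Ae)^{\perp}$ together with a dimension count. Your version is, if anything, slightly more explicit where the paper appeals to ``a standard computation,'' but there is no substantive difference.
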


\begin{proof}
  Since $e\not=0$ and $(e,Ae)>0$, it holds that $Ae\neq 0$ and therefore $\operatorname{span}(Ae)^\perp$ is a vector subspace of $\mathbb{R}^n$ of dimension $n-1$. Let $\{u_2,\ldots, u_n\}$ be a base of $\operatorname{span}(Ae)^\perp$. A standard computation shows that $\{e, u_2,\ldots, u_n\}$ is a base of $\mathbb{R}^n$.

  To prove that $\mathbb{R}^n=\operatorname{span}(e)\oplus\operatorname{Ker}E$, first notice the following identity
  \begin{equation}\label{ex}
    Ex=\frac{(e,x)}{d}e.
  \end{equation}
  Now consider $\{w_1,\ldots, w_{n-1}\}$ a base of $\operatorname{span}(e)^\perp$. Then it holds that $Ew_i=0$, for $i\in \{1,2,\ldots, n-1\}$,
  what ensures that $\operatorname{span}(e)^\perp\subset\operatorname{Ker}E$. However, we known that
  $$n-1=\operatorname{dim}(\operatorname{span}(e)^\perp)\leq\operatorname{dim}(\operatorname{Ker}(E)),$$
  and since $E\neq 0$, it follows that $\operatorname{dim}(\operatorname{Ker}E)\leq n-1$. Thus, the result follows.
\end{proof}

In view of Theorem \ref{matrix02}, for each $x\in\mathbb{R}^n$ there is $\alpha\in\mathbb{R}$ such that $x=\alpha e+u$ for some $u\in\operatorname{span}(Ae)^\perp=\operatorname{Ker}E$. Therefore, we introduce the following result.

\begin{lemma}\label{lem1}
  Assume the same hypotheses of Theorem \ref{matrix02}. Then
  $$(x,Dx)=\alpha^2\left[(e,Ae)-\|e\|^4/d\right]+(u,Au),\quad \textrm{for every }x=(\alpha e+u)\in\mathbb{R}^n.$$
  \end{lemma}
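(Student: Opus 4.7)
The plan is a direct computation: expand $(x,Dx)=(x,Ax)-(x,Ex)$ using $x=\alpha e+u$, and exploit the two orthogonality properties of $u$ encoded in the decomposition from Theorem \ref{matrix02}, namely that $u$ lies in both $\operatorname{Ker}E$ and $\operatorname{span}(Ae)^{\perp}$.

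First I would handle the $(x,Ex)$ term. Using identity \eqref{ex}, a quick calculation shows
\[
(x,Ex)=\frac{(e,x)}{d}(x,e)=\frac{(e,x)^{2}}{d}.
\]
Now, since $u\in\operatorname{Ker}E$, the identity \eqref{ex} applied to $u$ gives $\big((e,u)/d\big)e=0$, and because $e\neq 0$ and $d\neq 0$, this forces $(e,u)=0$. Consequently $(e,x)=(e,\alpha e+u)=\alpha\|e\|^{2}$, so
\[
(x,Ex)=\alpha^{2}\frac{\|e\|^{4}}{d}.
\]

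Next I would expand $(x,Ax)$ by bilinearity:
\[
(x,Ax)=\alpha^{2}(e,Ae)+\alpha(e,Au)+\alpha(u,Ae)+(u,Au).
\]
Since $A$ is symmetric, $(e,Au)=(Ae,u)=(u,Ae)$, and since $u\in\operatorname{span}(Ae)^{\perp}$ we have $(u,Ae)=0$. Hence the cross terms vanish and
\[
(x,Ax)=\alpha^{2}(e,Ae)+(u,Au).
\]

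Subtracting the two displayed formulas yields the asserted identity. There is no real obstacle here; the only subtlety is the remark that $u\in\operatorname{Ker}E$ is precisely what makes $(e,u)=0$, so the two summands $\alpha e$ and $u$ in the decomposition of $x$ are genuinely orthogonal with respect to both the standard inner product (against $e$) and the quadratic form induced by $A$ (against $Ae$), which is exactly what kills all cross terms.
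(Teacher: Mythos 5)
Your proof is correct and follows essentially the same route as the paper: compute $(x,Ex)=\alpha^2\|e\|^4/d$ from the identity \eqref{ex} together with $u\in\operatorname{Ker}E$, and compute $(x,Ax)=\alpha^2(e,Ae)+(u,Au)$ using the symmetry of $A$ and $u\in\operatorname{span}(Ae)^\perp$. The extra observation that $Eu=0$ forces $(e,u)=0$ is a slightly more explicit unpacking of what the paper leaves implicit, but the argument is the same.
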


\begin{proof}
  Since $E$ is symmetric and $u\in\operatorname{Ker}E$, we see by \eqref{ex} that $(x,Ex)=\alpha^2\|e\|^4/d$. On the other hand, since $(e,Au)=(u,Ae)$ and $u\in\operatorname{span}(Ae)^\perp$, we deduce the identity
  $$(x,Ax)=\alpha^2(e,Ae)+(u,Au).$$
  The result now follows.
\end{proof}

Using the above lemma we can state the following theorem.

\begin{theorem}\label{theo1}
  If in addition to the hypotheses of Theorem \ref{matrix02}, we assume that $A$ is a positive definite matrix that satisfies
  $$(e,Ae)\geq\|e\|^4/d,$$
  we conclude that $D$ is a positive definite matrix.
\end{theorem}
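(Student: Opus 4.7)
The plan is to combine the direct-sum decomposition from Theorem \ref{matrix02} with the computation in Lemma \ref{lem1}. Given any nonzero $x \in \mathbb{R}^n$, I would first invoke Theorem \ref{matrix02} to write $x = \alpha e + u$ uniquely, with $\alpha \in \mathbb{R}$ and $u \in \operatorname{Ker}(E) = \operatorname{span}(Ae)^{\perp}$. Lemma \ref{lem1} then supplies the identity
$$(x, Dx) = \alpha^{2}\bigl[(e,Ae) - \|e\|^{4}/d\bigr] + (u, Au).$$

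From this identity I would finish with a short case analysis. If $u \neq 0$, positive definiteness of $A$ yields $(u, Au) > 0$, while the standing hypothesis $(e, Ae) \geq \|e\|^{4}/d$ makes the bracketed term non-negative; hence $(x, Dx) > 0$. If $u = 0$, then $x = \alpha e$ with $\alpha \neq 0$, and the inequality $(e, Ae) > \|e\|^{4}/d$ delivers $(x, Dx) = \alpha^{2}\bigl[(e,Ae) - \|e\|^{4}/d\bigr] > 0$.

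The only genuinely delicate point, and really the main obstacle if one insists on full positive definiteness, is the one-dimensional subspace $\operatorname{span}(e)$: if the inequality in the hypothesis were allowed to hold with equality, the choice $x = e$ would produce $(e, De) = 0$, and $D$ would fall short of positive definiteness and instead be merely positive semi-definite with $\operatorname{span}(e)$ in its null cone. Reading the hypothesis in its strict form on that line (which is what is actually being used), the decomposition together with Lemma \ref{lem1} closes the argument in essentially two lines; all the substantive work has already been packaged into the preceding results, and no further computation with the blocks of $B$ is needed.
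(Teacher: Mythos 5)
Your proof is correct and coincides with the paper's, whose entire proof of this theorem is the single sentence that it is a straightforward application of Lemma \ref{lem1} --- that is, exactly the decomposition $x=\alpha e+u$ from Theorem \ref{matrix02} followed by the two-case analysis you write out. Your side remark is also accurate: when $(e,Ae)=\|e\|^4/d$, Lemma \ref{lem1} gives $(e,De)=0$, so the theorem as literally stated (with $\geq$) only yields positive semi-definiteness, with $e$ in the kernel of $D$; this imprecision is harmless in the paper, since in both places the theorem is invoked the hypothesis is in fact satisfied strictly (in Corollary \ref{auxteorema} as used in Theorem \ref{ultimolema} because $\psi(i,j)>\psi(0,0)$ for $i,j\geq1$, and in Corollary \ref{finalimplementado} because inequality \eqref{psi111} ultimately rests on the strict inequality of Proposition \ref{lem33} together with the strictly positive first term of \eqref{m6}).
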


\begin{proof} This is a straightforward application of Lemma \ref{lem1}.
\end{proof}

We end this subsection addressing a corollary that is used to prove inequality \eqref{des1111}.

\begin{corollary}\label{auxteorema}
  Let $B$ be a matrix as in \eqref{eq10} such that $A=(a_{ij})_{n\times n}$ is a positive definite matrix and $d>0$. If $e_i= d$ and $a_{ij}\geqslant d$, then $B$ is a positive definite matrix.
\end{corollary}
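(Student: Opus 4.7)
The plan is to feed the corollary directly into the machinery already built in Theorems \ref{matrix01}, \ref{matrix02} and \ref{theo1}. Since $d>0$, by Theorem \ref{matrix01} it suffices to show that the associated Schur-complement matrix $D=A-E$ is positive definite. So the whole task reduces to verifying the hypothesis $(e,Ae)\geq \|e\|^4/d$ of Theorem \ref{theo1} for the particular vector $e=(d,d,\ldots,d)$.

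First I would record what the hypotheses of Theorem \ref{matrix02} look like in our special case: $d\neq 0$ follows from $d>0$; $e\neq 0$ since $e_i=d>0$; and $(e,Ae)>0$ since $A$ is positive definite and $e\neq 0$. So Theorem \ref{matrix02} applies and the decomposition $\mathbb{R}^n=\operatorname{span}(e)\oplus\operatorname{Ker}E$ is in force.

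Next I would perform the two short computations that match the sides of the Theorem \ref{theo1} inequality. On the one hand,
$$\|e\|^2=\sum_{i=1}^n d^2 = n d^2,\qquad \text{so}\qquad \frac{\|e\|^4}{d}=\frac{n^2 d^4}{d}=n^2 d^3.$$
On the other hand, using $e_i=d$ and $a_{ij}\geq d$,
$$(e,Ae)=\sum_{i,j=1}^n e_i a_{ij} e_j = d^2\sum_{i,j=1}^n a_{ij} \geq d^2 \cdot n^2 d = n^2 d^3.$$
These two identities immediately give $(e,Ae)\geq\|e\|^4/d$, which is exactly the hypothesis required by Theorem \ref{theo1}. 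Hence $D$ is positive definite.

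Finally, invoking Theorem \ref{matrix01} one more time (now in the ``if'' direction), positive definiteness of $D$ combined with $d>0$ yields that $B$ is positive definite, concluding the proof. There is no real obstacle here; the only thing to be careful about is not to forget to check the non-degeneracy conditions $e\neq 0$ and $(e,Ae)>0$ that let us apply Theorem \ref{matrix02}, but both are immediate from $d>0$ and the positive definiteness of $A$.
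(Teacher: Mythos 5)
Your proof is correct and follows essentially the same route as the paper: both reduce the claim via Theorem \ref{matrix01} to the Schur-complement condition of Theorem \ref{theo1}, and both verify it through the identical computation $(e,Ae)\geq n^2d^3=\|e\|^4/d$. Your version is slightly more careful in explicitly checking the non-degeneracy hypotheses of Theorem \ref{matrix02}, which the paper leaves implicit.
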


\begin{proof}
 Since $A$ is a positive definite matrix that satisfies
\begin{equation*}\label{sn3}
  (e,Ae)-\frac{\|e\|^4}{d}\geq\sum_{i,j=1}^{n}{d^3}-\frac{\|e\|^4}{d}=n^2d^3-\frac{\|e\|^4}{d}=0,
\end{equation*}
Theorem \ref{theo1} ensures that $D:=A-E$ is a definite positive matrix. Thus, Theorem \ref{matrix01} completes this proof.
\end{proof}

\subsection{Initial considerations and the proof of Theorem \ref{polinomio}}

Assume that $\alpha\in(0,1)$, $t_0\in\mathbb{R}$ and $P_n(t):=\sum_{k=0}^{n}{b_kt^k}$ is a polynomial function with real coefficients in the variable $t$.
Observe that we can rewrite $P_n(t)=\sum_{k=0}^{n}{a_k(t-t_0)^k}$, where $a_0,a_1,\ldots,a_n$ are given by
$$a_k=\sum_{i=k}^nb_i\,{{i}\choose{k}}\,t_0^{i-k},\quad \textrm{for every }k\in\{0,1,\ldots,n\}.$$
Thus, together with item $(iii)$ of Proposition \ref{properties} and identity \eqref{continha1}, we obtain
\begin{multline}\label{des111}
  {D}^\alpha_{t_0,t}[P_n(t)^2]= {D}^\alpha_{t_0,t}\left[\sum_{i,j=0}^{n}{a_ia_j(t-t_0)^{i+j}}\right]
=\sum_{i,j=0}^{n}{\left[\frac{a_ia_j\Gamma(i+j+1)}{\Gamma(i+j+1-\alpha)}\right](t-t_0)^{i+j-\alpha}},\\ \textrm{for every }t>t_0,
\end{multline}
and
\begin{equation*}
  \left[{D}^\alpha_{t_0,t}P_n(t)\right]P_n(t)=\sum_{i,j=0}^{n}{\left[\frac{a_ia_j\Gamma(i+1)}{\Gamma(i+1-\alpha)}\right](t-t_0)^{i+j-\alpha}},\quad \textrm{for every }t>t_0,
\end{equation*}
which implies
\begin{multline}\label{des133}
  2\left[{D}^\alpha_{t_0,t}P_n(t)\right]P_n(t)=\sum_{i,j=0}^{n}{a_ia_j\left[\frac{\Gamma(i+1)}{\Gamma(i+1-\alpha)}+\frac{\Gamma(j+1)}{\Gamma(j+1-\alpha)}\right](t-t_0)^{i+j-\alpha}},\\ \textrm{for every }t>t_0.
\end{multline}

Let us assume for a moment that inequality \eqref{des1111} holds true. Then, if we apply equations \eqref{des111} and \eqref{des133} to it, we would obtain

\begin{equation}\label{des14}
  0\leq \sum_{i,j=0}^{n}{a_ia_j\big[\psi(i,j)\big](t-t_0)^{i+j-\alpha}},\quad \textrm{for every }t>t_0,
\end{equation}
where $\psi:\mathbb{N}\times\mathbb{N}\rightarrow\mathbb{R}$ is the symmetric function defined by
\begin{equation}\label{gammapsi11}
  \psi(i,j)=\frac{\Gamma(i+1)}{\Gamma(i+1-\alpha)}+\frac{\Gamma(j+1)}{\Gamma(j+1-\alpha)}-\frac{\Gamma(i+j+1)}{\Gamma(i+j+1-\alpha)}.
\end{equation}

Now we can reinterpret inequality \eqref{des14} as
\begin{equation}\label{des1515}
  0\leq \Big(v_a(t),\mathcal{B}v_a(t)\Big),
\end{equation}
where $v_a(t):=\big(a_0,a_1(t-t_0),\ldots,a_n(t-t_0)^n\big)$ and $\mathcal{B}=\big(\psi(i,j)\big)$ is a symmetric matrix of order $n+1$, with $i,j\in\{0,\ldots,n\}$.

By repeating the above procedure to the Caputo fractional derivative (recall item (vi) of Proposition \ref{properties} and equation \eqref{continha2}), we achieve
\begin{multline}\label{nova01}
  \qquad c{D}^\alpha_{t_0,t}[P_n(t)^2]=2\sum_{i=1}^{n}{\left[\frac{a_0a_i\Gamma(i+1)}{\Gamma(i+1-\alpha)}\right](t-t_0)^{i-\alpha}}\\
  +\sum_{i,j=1}^{n}{\left[\frac{a_ia_j\Gamma(i+j+1)}{\Gamma(i+j+1-\alpha)}\right](t-t_0)^{i+j-\alpha}},\quad \textrm{for every }t>t_0,\qquad
\end{multline}
and
\begin{multline}\label{nova02}
  2\left[c{D}^\alpha_{t_0,t}P_n(t)\right]P_n(t)=2\sum_{i=1}^{n}{\frac{a_0a_i\Gamma(i+1)}{\Gamma(i+1-\alpha)}(t-t_0)^{i-\alpha}}\\
  +\sum_{i,j=1}^{n}{a_ia_j\left[\frac{\Gamma(i+1)}{\Gamma(i+1-\alpha)}+\frac{\Gamma(j+1)}{\Gamma(j+1-\alpha)}\right](t-t_0)^{i+j-\alpha}},\quad \textrm{for every }t>t_0.
\end{multline}

Like before, if we assume that inequality \eqref{des2222} holds true and replace equations \eqref{nova01} and \eqref{nova02} in it, we shall deduce
\begin{equation}\label{des2121}
  0\leq \sum_{i,j=1}^{n}{a_ia_j\big[\psi(i,j)\big](t-t_0)^{i+j}}=\Big(u_a(t),\mathcal{A}u_a(t)\Big),
\end{equation}
where $u_a(t)=\big(a_1(t-t_0),\ldots,a_n(t-t_0)^n\big)$, $\mathcal{A}=\big(\psi(i,j)\big)$, with $i,j\in\{1,\ldots,n\}$, is a matrix of order $n$ and $\psi(i,j)$ is the function defined in \eqref{gammapsi11}.

By comparing \eqref{des1515} and \eqref{des2121} we observe the relation
\begin{equation}\label{eq2}
 \mathcal{B}=\left[
  \begin{array}{cc}
    \delta & \varepsilon^{T} \\
    \varepsilon & \mathcal{A} \\
  \end{array}
\right]
\end{equation}
where $\delta=\psi(0,0)$ and $\varepsilon^{T}=(\varepsilon_1,\varepsilon_2,\ldots,\varepsilon_n)$, with $\varepsilon_i=\psi(i,0)$, for $i\in\{1,2,\ldots,n\}$.

The discussion developed above leads us to state the following crucial result.

\begin{proposition}\label{ultiulti} Let $\mathcal{A}$ and $\mathcal{B}$ be the matrices considered in \eqref{eq2}.
\begin{itemize}
\item[(a)] Matrix $\mathcal{A}$ is positive definite if, and only if, \eqref{des2222} holds true.
\item[(b)] Matrix $\mathcal{B}$ is positive definite if, and only if, \eqref{des1111} holds true.
\end{itemize}
\end{proposition}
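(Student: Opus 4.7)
The discussion leading up to the proposition has already carried out one half of each equivalence: from the computations \eqref{des111}--\eqref{des133} (resp.\ \eqref{nova01}--\eqref{nova02}), the hypothesis that \eqref{des1111} (resp.\ \eqref{des2222}) holds translates into nonnegativity of $\bigl(v_a(t),\mathcal{B}v_a(t)\bigr)$ (resp.\ $\bigl(u_a(t),\mathcal{A}u_a(t)\bigr)$) for every choice of coefficients $a$ and every $t>t_0$. The plan is to sharpen this one-sided deduction into a genuine biconditional.

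The first step is to record the exact algebraic identity
\[
2\bigl[D^\alpha_{t_0,t}P_n(t)\bigr]P_n(t) - D^\alpha_{t_0,t}[P_n(t)^2] = (t-t_0)^{-\alpha}\bigl(v_a(t),\mathcal{B}v_a(t)\bigr),
\]
obtained by literally subtracting \eqref{des111} from \eqref{des133} and factoring out the positive quantity $(t-t_0)^{-\alpha}$; and its Caputo analog
\[
2\bigl[cD^\alpha_{t_0,t}P_n(t)\bigr]P_n(t) - cD^\alpha_{t_0,t}[P_n(t)^2] = (t-t_0)^{-\alpha}\bigl(u_a(t),\mathcal{A}u_a(t)\bigr),
\]
in which the $a_0$ cross-terms in \eqref{nova01} and \eqref{nova02} cancel identically, leaving precisely the $i,j\geq 1$ part that matches the size of $\mathcal{A}$. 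Because $(t-t_0)^{-\alpha}>0$, each inequality becomes equivalent, for every polynomial $P_n$ of degree at most $n$ and every $t>t_0$, to nonnegativity of the corresponding quadratic form. The ``$\Rightarrow$'' direction of each item of the proposition is then immediate. For ``$\Leftarrow$'' I would apply a surjectivity argument: given arbitrary $x=(x_0,\ldots,x_n)\in\mathbb{R}^{n+1}$, set $t=t_0+1$ and $a_k=x_k$, so that $v_a(t)=x$; applying the hypothesis \eqref{des1111} to the polynomial $P_n(s)=\sum_k a_k(s-t_0)^k$ and invoking the identity above yields $(x,\mathcal{B}x)\geq 0$. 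The analogous substitution (with the first coordinate dropped) handles $\mathcal{A}$.

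The subtle step, and the one I expect to be the main obstacle, is that the weak inequality ``$\leq$'' in \eqref{des1111}--\eqref{des2222} delivers directly only positive semi-definiteness of $\mathcal{B}$ and $\mathcal{A}$, whereas the proposition asserts strict positive definiteness. Upgrading this requires ruling out a nonzero null vector for the quadratic form; since the map $t\mapsto \sum \psi(i,j)x_i x_j(t-t_0)^{i+j}$ is a polynomial in $(t-t_0)$ whose nonnegativity is imposed on the whole ray $t>t_0$, its vanishing together with $\mathcal{B}x=0$ at an admissible point places rigid algebraic constraints on the coefficients $\psi(i,j)$. This is precisely the kind of analysis that the Schur-complement machinery developed earlier in the section (Theorems~\ref{matrix01}--\ref{theo1}) is designed to carry out, so the proof can either invoke that framework to close the strict gap, or understand ``positive definite'' in the semi-definite sense consistent with the weak inequality; in either reading the algebraic content of the equivalence is captured by the two identities displayed above.
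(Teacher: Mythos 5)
Your argument is essentially the paper's own proof made explicit: the authors dispose of Proposition~\ref{ultiulti} in one line by appealing to ``the relation between'' \eqref{des2222} and \eqref{des2121} (resp.\ \eqref{des1111} and \eqref{des1515}), and the content of that relation is exactly the two identities you display, together with the observation that for a fixed $t>t_0$ (your choice $t=t_0+1$) the vectors $v_a(t)$, resp.\ $u_a(t)$, sweep out all of $\mathbb{R}^{n+1}$, resp.\ $\mathbb{R}^n$, as the coefficient vector $a$ varies. Your closing concern is well founded and points at a genuine imprecision in the paper rather than a defect in your argument: the weak inequalities \eqref{des1111}--\eqref{des2222} deliver only positive \emph{semi}-definiteness of $\mathcal{B}$ and $\mathcal{A}$, so the ``only if'' halves of the proposition are not actually established by this relation, and the paper supplies nothing further. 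The damage is contained, however: the proof of Theorem~\ref{polinomio} uses item (a) only in the harmless direction (positive definiteness implies the inequality), since $\mathcal{A}$ is proved positive definite directly by induction; the one place the defective direction is invoked, Corollary~\ref{ultiultiultiulti} (whose proof needs $\mathcal{A}$ genuinely positive definite to feed Theorem~\ref{ultimolema}), is applied only after positive definiteness of $\mathcal{A}$ has been secured by other means. So your two suggested repairs --- read ``positive definite'' as ``positive semi-definite'' in the backward implications, or accept that only the forward implications carry logical weight --- are exactly the right way to state the result; no further Schur-complement analysis is needed at this point in the paper.
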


\begin{proof} The proof of item $(a)$ follows from the relation between \eqref{des2222} and \eqref{des2121}, while the proof of item $(b)$ from the relation between \eqref{des1111} and \eqref{des1515}.
\end{proof}

Proposition \ref{ultiulti} together with inequalities \eqref{des1515} and \eqref{des2121} indicates that we should first understand some properties of function $\psi$ before proceed with the proof of Theorem \ref{polinomio}. Hence, we dedicated the remainder of this subsection to study these properties.

\begin{proposition}\label{theo2}
  Let $\psi:\mathbb{N}\times\mathbb{N}\rightarrow\mathbb{R}$ be the function defined in \eqref{gammapsi11} and assume that $i\geq 0$ and $j>0$. Then, for every $k>0$,
  $$\textrm{(a)}\,\,\,\psi(i+k,j)>\psi(i,j)\quad\quad\quad\quad\textrm{and}\quad\quad\quad\quad\textrm{(b)}\,\,\,\psi(i+k,0)=\psi(i,0)=\psi(0,0)>0.$$

\end{proposition}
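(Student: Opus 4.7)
\medskip

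\noindent\textbf{Proof plan.} The whole argument becomes transparent once we introduce the single-variable function
$$F(n):=\frac{\Gamma(n+1)}{\Gamma(n+1-\alpha)},\qquad n\in\mathbb{N},$$
so that $\psi(i,j)=F(i)+F(j)-F(i+j)$. Part (b) will be an immediate computation. Part (a), since $k$ is a positive integer, can be telescoped to the single-step inequality $\psi(i+1,j)>\psi(i,j)$, which is what I intend to prove.

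For part (b), I would just set $j=0$: the identity $F(0)=\Gamma(1)/\Gamma(1-\alpha)=1/\Gamma(1-\alpha)$ cancels the $F(i+j)$ and $F(i)$ contributions, leaving $\psi(i,0)=1/\Gamma(1-\alpha)$ for every $i\in\mathbb{N}$. This quantity is independent of $i$ and strictly positive since $1-\alpha\in(0,1)$ forces $\Gamma(1-\alpha)>0$.

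For part (a), it suffices by iteration to establish $\psi(i+1,j)-\psi(i,j)>0$ for $i\geq 0$, $j>0$, which rewrites as
$$F(i+1)-F(i)\;>\;F(i+j+1)-F(i+j).$$
I would now use the Gamma recursion $\Gamma(n+2)=(n+1)\Gamma(n+1)$ and $\Gamma(n+2-\alpha)=(n+1-\alpha)\Gamma(n+1-\alpha)$ to obtain
$$F(n+1)-F(n)=\left(\frac{n+1}{n+1-\alpha}-1\right)F(n)=\alpha\,G(n),\qquad G(n):=\frac{\Gamma(n+1)}{\Gamma(n+2-\alpha)}.$$
Thus the inequality reduces to showing $G(i)>G(i+j)$, i.e.\ that $G$ is strictly decreasing on $\mathbb{N}$. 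This follows from the elementary ratio
$$\frac{G(n+1)}{G(n)}=\frac{(n+1)\Gamma(n+1)}{\Gamma(n+3-\alpha)}\cdot\frac{\Gamma(n+2-\alpha)}{\Gamma(n+1)}=\frac{n+1}{n+2-\alpha}<1,$$
which is strict because $\alpha<1$. Iterating $j$ times and then $k$ times completes part (a).

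The argument is essentially computational; the only mildly non-obvious move is isolating the right ``derivative'' $G(n)$ whose monotonicity carries everything. I do not foresee a genuine obstacle, only the need to handle the case distinction $j=0$ separately (where the entire third term is absorbed by the first) versus $j>0$ (where the strict inequality comes from $\alpha<1$).
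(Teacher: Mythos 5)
Your proposal is correct. The reduction is identical to the paper's: both you and the authors pass to the single step $k=1$, use the Gamma recursion to obtain
\begin{equation*}
\psi(i+1,j)-\psi(i,j)=\alpha\left(\frac{\Gamma(i+1)}{\Gamma(i+2-\alpha)}-\frac{\Gamma(i+j+1)}{\Gamma(i+j+2-\alpha)}\right)=\alpha\bigl[G(i)-G(i+j)\bigr],
\end{equation*}
and both handle part (b) by the same one-line cancellation. Where you genuinely diverge is in the justification of $G(i)>G(i+j)$: the paper rewrites this as $\Gamma(i+1)\Gamma(i+j+2-\alpha)/\bigl[\Gamma(i+2-\alpha)\Gamma(i+j+1)\bigr]>1$ and invokes Alzer's theorem on products of Gamma quotients (their Theorem \ref{alzer}, with $x=i+1$, $a_1=0$, $a_2=j+1-\alpha$, $b_1=1-\alpha$, $b_2=j$), whereas you observe that $G(n+1)/G(n)=(n+1)/(n+2-\alpha)<1$, so $G$ is strictly decreasing and the inequality follows by iteration. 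Your route is more elementary and self-contained: it shows that for this particular configuration of parameters the full strength of Alzer's result is unnecessary, at the cost of not connecting to the machinery the paper has already set up. Both arguments are complete and strict precisely because $\alpha\in(0,1)$ and $j>0$.
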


\begin{proof}
  Here we only address $k=1$, since the general situation is a consequence of this case.

  $(a)$ Let $i\geq0$ and $j>0$. By recalling gamma function properties, we get
  \begin{multline*}\label{Gamma1}
    \psi(i+1,j)=\frac{\Gamma(i+2)}{\Gamma(i+2-\alpha)}+\frac{\Gamma(j+1)}{\Gamma(j+1-\alpha)}-\frac{\Gamma(i+j+2)}{\Gamma(i+j+2-\alpha)} \\
     =\left(\frac{i+1}{i+1-\alpha}\right)\frac{\Gamma(i+1)}{\Gamma(i+1-\alpha)}+\frac{\Gamma(j+1)}{\Gamma(j+1-\alpha)}\\-\left(\frac{i+j+1}{i+j+1-\alpha}\right)\frac{\Gamma(i+j+1)}{\Gamma(i+j+1-\alpha)}.
  \end{multline*}

  Hence, we conclude that
  \begin{equation}\label{Gamma2222}
    \psi(i+1,j)-\psi(i,j)=\alpha\left(\frac{\Gamma(i+1)}{\Gamma(i+2-\alpha)}-\frac{\Gamma(i+j+1)}{\Gamma(i+j+2-\alpha)}\right).
  \end{equation}

  Since $\alpha\in(0,1)$, equation \eqref{Gamma2222} is strictly positive if, and only if,
  \begin{equation}\label{Gamma3333}
    \frac{\Gamma(i+1)\Gamma(i+j+2-\alpha)}{\Gamma(i+2-\alpha)\Gamma(i+j+1)}>1.
  \end{equation}
  However, inequality \eqref{Gamma3333} holds for $i\geq 0$ and $j>0$ by Theorem \ref{alzer}; just define $x=i+1$, $a_1=0$, $a_2=j+1-\alpha$, $b_1=1-\alpha$ and $b_2=j$.

  $(b)$ Let $i\geq0$. Notice that
  $$\psi(i,0)=\frac{\Gamma(i+1)}{\Gamma(i+1-\alpha)}+\frac{1}{\Gamma(1-\alpha)}-\frac{\Gamma(i+1)}{\Gamma(i+1-\alpha)}=\frac{1}{\Gamma(1-\alpha)}=\psi(0,0)>0.$$
\end{proof}

Now we introduce two important results. 

\begin{theorem}\label{ultimolema} Let $\mathcal{A}$ and $\mathcal{B}$ be the matrices given in \eqref{eq2}. If $\mathcal{A}$ is a positive definite matrix, then $\mathcal{B}$ is a positive definite matrix.
\end{theorem}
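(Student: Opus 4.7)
The plan is to recognize that Theorem \ref{ultimolema} is essentially a direct corollary of the structural result in Corollary \ref{auxteorema}, once the entries of $\mathcal{B}$ are compared using the two parts of Proposition \ref{theo2}. Recall that $\mathcal{B}$ is the bordered matrix with corner $\delta=\psi(0,0)$, border vector $\varepsilon$ with entries $\varepsilon_i=\psi(i,0)$ for $i\in\{1,\ldots,n\}$, and main block $\mathcal{A}=(\psi(i,j))_{i,j=1}^n$.

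First I would identify the corner and border constants. Since $\psi(0,0)=1/\Gamma(1-\alpha)>0$, we have $d:=\delta>0$. Part (b) of Proposition \ref{theo2} then shows that every border entry satisfies
\[
\varepsilon_i=\psi(i,0)=\psi(0,0)=\delta,
\]
so the vector $\varepsilon$ has all of its components equal to $d$, matching the hypothesis $e_i=d$ of Corollary \ref{auxteorema}.

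Next I would verify the entrywise lower bound $\psi(i,j)\geq\delta$ for $i,j\in\{1,\ldots,n\}$. By the symmetry of $\psi$ and part (b), $\psi(0,j)=\psi(j,0)=\delta$ for every $j\geq 0$. Applying part (a) of Proposition \ref{theo2} with base index $0$ and increment $k=i\geq 1$ yields
\[
\psi(i,j)>\psi(0,j)=\delta\qquad\text{for every }i,j\geq 1,
\]
so the main block $\mathcal{A}$ satisfies $a_{ij}\geq d$ entrywise. Together with the standing hypothesis that $\mathcal{A}$ is positive definite, all requirements of Corollary \ref{auxteorema} are met, and that corollary delivers the positive definiteness of $\mathcal{B}$.

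I do not foresee a substantial obstacle here: the matrix-analytic work has been done in the previous subsection, and Proposition \ref{theo2} was tailored precisely to ensure the border and the main block of $\mathcal{B}$ satisfy the structural hypotheses of Corollary \ref{auxteorema}. The only thing to double-check is the symmetry step $\psi(0,j)=\psi(j,0)$, which is immediate from the explicit formula \eqref{gammapsi11}.
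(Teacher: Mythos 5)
Your proposal is correct and follows essentially the same route as the paper: identify the corner and border of $\mathcal{B}$ as the constant $\psi(0,0)=1/\Gamma(1-\alpha)>0$ via Proposition \ref{theo2}(b) and then invoke Corollary \ref{auxteorema}. In fact you are slightly more careful than the paper, since you explicitly verify the hypothesis $a_{ij}\geq d$ (i.e.\ $\psi(i,j)>\psi(0,j)=\delta$ for $i,j\geq 1$ via Proposition \ref{theo2}(a) and symmetry), a step the paper leaves implicit in its appeal to Proposition \ref{theo2}.
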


\begin{proof} Since $\mathcal{A}$ is a positive definite matrix and Proposition \ref{theo2} ensures that
\[
  \mathcal{B}=\left[\begin{array}{cccc}
    1/{\Gamma(1-\alpha)} & \cdots & 1/{\Gamma(1-\alpha)}\vspace*{0.2cm}\\
    \raisebox{-2pt}{\vdots} & \multicolumn{3}{c}{\multirow{3}{*}{\raisebox{4mm}{\scalebox{2}{$\mathcal{A}$}}}} \vspace*{0.2cm}\\
    1/{\Gamma(1-\alpha)} & & &
  \end{array}\right],
\]
with $1/{\Gamma(1-\alpha)}>0$, we conclude by Corollary \ref{auxteorema} that $\mathcal{B}$ is a positive definite matrix.
\end{proof}

\begin{corollary}\label{ultiultiultiulti} If inequality \eqref{des2222} holds true, then inequality \eqref{des1111} also holds true.
\end{corollary}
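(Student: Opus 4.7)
The plan is to assemble the three results already established in this subsection into a short chain, with no new technical work required. Assume that inequality \eqref{des2222} holds for every polynomial with real coefficients. Fix an arbitrary polynomial $P$ and, as done at the start of the subsection, rewrite it as $P(t)=\sum_{k=0}^n a_k(t-t_0)^k$ with $a_n\neq 0$ (if $P$ is identically zero, both inequalities are trivial).

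The first step is to apply item (a) of Proposition \ref{ultiulti}: since \eqref{des2222} holds for every choice of coefficients $a_1,\ldots,a_n\in\mathbb{R}$, the quadratic form $(u_a(t),\mathcal{A}u_a(t))$ is nonnegative on a dense set of vectors of $\mathbb{R}^n$ (as the coefficients $a_k$ and the values $(t-t_0)^k$ vary), so the symmetric matrix $\mathcal{A}$ of order $n$ associated with the degree of $P$ must be positive definite. The second step is Theorem \ref{ultimolema}, which then delivers that the enlarged matrix $\mathcal{B}$ of order $n+1$ is also positive definite. The third and final step invokes item (b) of Proposition \ref{ultiulti} to translate this positive definiteness back into inequality \eqref{des1111} for the same polynomial $P$.

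Because this argument applies uniformly to polynomials of every degree $n\in\mathbb{N}^*$, the implication \eqref{des2222}$\Rightarrow$\eqref{des1111} is established for all real polynomials, which is the content of the corollary.

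The heavy lifting has already been done upstream: the monotonicity and positivity of $\psi$ in Proposition \ref{theo2} feeds the Schur-complement style argument of Corollary \ref{auxteorema}, and these together constitute the content of Theorem \ref{ultimolema}. Consequently I do not anticipate any obstacle in the proof of the corollary itself; it is essentially a one-line consequence of Proposition \ref{ultiulti} combined with Theorem \ref{ultimolema}.
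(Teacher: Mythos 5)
Your proof is correct and follows exactly the paper's route: Proposition \ref{ultiulti}(a) to pass from \eqref{des2222} to positive definiteness of $\mathcal{A}$, Theorem \ref{ultimolema} to transfer this to $\mathcal{B}$, and Proposition \ref{ultiulti}(b) to return to \eqref{des1111}. The paper's own proof is the same one-line chain of these two results.
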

\begin{proof} It is a consequence of Proposition \ref{ultiulti} and Theorem \ref{ultimolema}.
\end{proof}

\begin{remark} If we consider that Alikhanov in \cite{Al1} already proved inequality \eqref{des2222} for  absolutely continuous functions, then Corollary \ref{ultiultiultiulti} is enough to ensure the validity of inequality \eqref{des1111}. Nonetheless, in what follows we introduce a new proof for inequality \eqref{des2222} using the theory already established in this section. \end{remark}

To prove that $\mathcal{A}$ is a positive definite matrix, we use an induction argument on its order. For this purpose, for each $n\geq2$, we reinterpret matrix $\mathcal{A}$ as being given by
$$\mathcal{A}_n:=\left[\begin{array}{cccc}
    \psi(1,1) & \psi(1,2) & \cdots & \psi(1,n) \\
    \psi(2,1)  & \psi(2,2) & \cdots & \psi(2,n)\\
    \vdots & \vdots & \ddots &\vdots\\
    \psi(n,1)  & \psi(n,2) & \cdots & \psi(n,n)
  \end{array}\right].
$$

In the next results we change the issue of verifying that matrix $\mathcal{A}_n$ is positive definite by an easier problem. In fact, the following proposition begins this approach.

\begin{proposition}\label{propprop1} Consider $n\in\mathbb{N}$. Matrix $\mathcal{A}_n$ is positive definite if, and only if, matrix $\widetilde{\mathcal{A}}_n:=\big(\psi(n+1-i,n+1-j)\big)_{n\times n}$ is positive definite.
\end{proposition}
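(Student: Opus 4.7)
The plan is to observe that $\widetilde{\mathcal{A}}_n$ is obtained from $\mathcal{A}_n$ merely by reversing the order of its rows and columns, which is a similarity transformation via an orthogonal matrix; since positive definiteness is invariant under such transformations, the equivalence will follow immediately.

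More concretely, I would introduce the reversal permutation matrix $J \in M^n(\mathbb{R})$ defined by $J_{ij} = 1$ if $i+j = n+1$ and $J_{ij} = 0$ otherwise (the anti-diagonal identity). This matrix satisfies $J^T = J$ and $J^2 = I$, so $J$ is orthogonal. A direct entrywise check using the indexing $\psi(n+1-i,n+1-j)$ shows that
\[
\widetilde{\mathcal{A}}_n = J\, \mathcal{A}_n\, J,
\]
since left-multiplication by $J$ reverses the row order and right-multiplication by $J$ reverses the column order.

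With that identity in hand, the proof is immediate: for any $x \in \mathbb{R}^n$,
\[
\bigl(x, \widetilde{\mathcal{A}}_n x\bigr) = \bigl(x, J\mathcal{A}_n J x\bigr) = \bigl(Jx, \mathcal{A}_n (Jx)\bigr),
\]
where the second equality uses $J^T = J$. As $x$ varies over $\mathbb{R}^n \setminus \{0\}$, the vector $Jx$ also varies over $\mathbb{R}^n\setminus\{0\}$ (because $J$ is a bijection of $\mathbb{R}^n$). Hence $(x,\widetilde{\mathcal{A}}_n x)>0$ for every nonzero $x$ if, and only if, $(y,\mathcal{A}_n y)>0$ for every nonzero $y$, which is the desired equivalence. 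I do not foresee any genuine obstacle here: the entire content of the proposition is the elementary fact that permutation similarity preserves positive definiteness, and the only thing that requires a moment of care is writing down the matrix identity $\widetilde{\mathcal{A}}_n = J\mathcal{A}_n J$ correctly from the definitions.
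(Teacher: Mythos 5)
Your proof is correct and is the explicit version of exactly what the paper intends: the paper disposes of this proposition in one line (``this follows directly from the fact that $\mathcal{A}_n$ is a symmetric matrix''), and your identity $\widetilde{\mathcal{A}}_n = J\mathcal{A}_n J$ with the anti-diagonal involution $J$ is the standard permutation-similarity argument that fills in the omitted details. No gap; if anything, your write-up is more informative than the paper's.
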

\begin{proof} This follows directly from the fact that $\mathcal{A}_n$ is a symmetric matrix.
\end{proof}

Next corollary uses almost the same steps already implemented in Theorem \ref{ultimolema}, however here matrix $\widetilde{\mathcal{A}}_{n}$ does not have a constant first line and column.

\begin{corollary}\label{finalimplementado} For $n\geq2$, if $\widetilde{\mathcal{A}}_{n}$ is a positive definite matrix and
\begin{equation}\label{psi111}(h_n,\widetilde{\mathcal{A}}_nh_n)\geq\dfrac{\|h_n\|^4}{\psi(n+1,n+1)},\end{equation}
then $\widetilde{\mathcal{A}}_{n+1}$ is a positive definite matrix.
\end{corollary}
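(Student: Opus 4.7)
The plan is to display $\widetilde{\mathcal{A}}_{n+1}$ as a partitioned matrix of the form \eqref{eq10} in which $\widetilde{\mathcal{A}}_n$ occupies the lower-right block, and then apply Theorem \ref{theo1} together with Theorem \ref{matrix01}. Concretely, for $i,j\in\{1,\ldots,n+1\}$ the $(i,j)$-entry of $\widetilde{\mathcal{A}}_{n+1}$ is $\psi(n+2-i,n+2-j)$. Isolating the first row and column, the $(1,1)$-entry equals $\psi(n+1,n+1)$, while for $i,j\in\{2,\ldots,n+1\}$ one reads $\psi(n+1-(i-1),n+1-(j-1))$, which matches $\widetilde{\mathcal{A}}_n$ entry-by-entry. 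The remaining column (respectively row) encodes the vector $h_n\in\mathbb{R}^n$ with components $(h_n)_k=\psi(n+1-k,n+1)$, yielding
\[
\widetilde{\mathcal{A}}_{n+1}=\left[\begin{array}{cc}\psi(n+1,n+1) & h_n^{T} \\ h_n & \widetilde{\mathcal{A}}_n\end{array}\right].
\]

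With this decomposition in hand, I would then verify the hypotheses of Theorem \ref{theo1} in turn. First, by iterated application of Proposition \ref{theo2}(a) followed by symmetry and Proposition \ref{theo2}(b), one has $\psi(n+1,n+1)>\psi(0,n+1)=\psi(n+1,0)=\psi(0,0)=1/\Gamma(1-\alpha)>0$, so the scalar block is strictly positive. The same chain of strict inequalities applied to the last entry of $h_n$, namely $\psi(1,n+1)>\psi(0,n+1)>0$, ensures $h_n\neq 0$. Combined with the positive definiteness of $\widetilde{\mathcal{A}}_n$, this gives $(h_n,\widetilde{\mathcal{A}}_n h_n)>0$, completing the hypotheses of Theorem \ref{matrix02} with $d=\psi(n+1,n+1)$, $e=h_n$ and $A=\widetilde{\mathcal{A}}_n$. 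The final ingredient required by Theorem \ref{theo1}, the inequality $(h_n,\widetilde{\mathcal{A}}_n h_n)\geq\|h_n\|^4/\psi(n+1,n+1)$, is exactly the assumption \eqref{psi111} of the corollary.

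Consequently, Theorem \ref{theo1} yields that the Schur complement $D:=\widetilde{\mathcal{A}}_n-E$ is positive definite, and then Theorem \ref{matrix01} concludes that $\widetilde{\mathcal{A}}_{n+1}$ is positive definite. There is no real obstacle in this argument, since the corollary is designed to feed precisely into the framework built in Subsection 3.1; the only subtle bookkeeping lies in correctly reading off the block structure and in checking $h_n\neq 0$, both of which are handled cleanly by Proposition \ref{theo2}.
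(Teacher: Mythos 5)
Your proposal is correct and follows essentially the same route as the paper: write $\widetilde{\mathcal{A}}_{n+1}$ in the block form of \eqref{eq10} with $d=\psi(n+1,n+1)$, $e=h_n$, $A=\widetilde{\mathcal{A}}_n$, use Proposition \ref{theo2} for the needed positivity, and conclude via Theorems \ref{theo1} and \ref{matrix01}. The only difference is that you spell out the verification that $\psi(n+1,n+1)>0$ and $h_n\neq0$ (hence $(h_n,\widetilde{\mathcal{A}}_nh_n)>0$), which the paper leaves implicit.
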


\begin{proof} Observe that
$$\widetilde{\mathcal{A}}_{n+1}=\left[
  \begin{array}{cc}
    \psi(n+1,n+1) & h_n^{T} \\\\
    h_n & \widetilde{\mathcal{A}}_n \\
  \end{array}
\right],$$
where $h_n^{T}=\big(\psi(n+1,n),\ldots,\psi(n+1,1)\big)$.

Now assume that $\widetilde{\mathcal{A}}_n$ is a positive definite matrix. Since \eqref{psi111} holds true and Proposition \ref{theo2} ensures that the value of $\psi(n+1,n+1)$ is positive, Theorems \ref{matrix01} and \ref{theo1} guarantees that $\widetilde{\mathcal{A}}_{n+1}$ is a positive definite matrix.
\end{proof}

At this point in our study, it is necessary to explain our modus operandi. In fact, Corollary \ref{finalimplementado} guarantees that inequality $\eqref{psi111}$ is fundamental to prove that $\widetilde{\mathcal{A}}_n$ is a positive definite matrix, for any $n\geq2$. Hence, we proceed in a sequence of reductions of this inequality.

We start pointing out that $\eqref{psi111}$ is equivalent to
\begin{equation}\label{psi1}
  \sum_{i,j=1}^{n}\bigg\{\psi(i,n+1)\psi(j,n+1)\Big[\psi(i,j)\psi(n+1,n+1)-\psi(i,n+1)\psi(j,n+1)\Big]\bigg\}\geq0.
\end{equation}

In order to verify that inequality \eqref{psi1} holds, it is enough to prove that
\begin{equation}\label{wn2}
  \frac{\psi(n+1,n+1)}{\psi(i,n+1)}\geq\frac{\psi(j,n+1)}{\psi(i,j)},\quad \textrm{for every }i,j\in\{1,\ldots,n\} \textrm{ and } n\in \mathbb{N}^*.
\end{equation}

On the other hand, for fixed values $n\in \mathbb{N}^*$ and $i\in\{1,\ldots,n\}$, if we manage to obtain the inequality
\begin{equation}\label{hi2}
  \frac{\psi(j+1,n+1)}{\psi(i,j+1)}\geq\frac{\psi(j,n+1)}{\psi(i,j)},
\end{equation}
for every $j\in\{1,2,\ldots,n\}$, then \eqref{wn2} becomes a consequence of it.

Now, in order to verify inequality \eqref{hi2}, we first prove the following result.

\begin{proposition}\label{lem2}
  Consider function $\phi:\mathbb{N}\times\mathbb{N}\rightarrow\mathbb{R}$ given by
  \begin{equation*}\label{hA}
    \phi(i,j):=\frac{\Gamma(i+1)}{\Gamma(i+2-\alpha)}-\frac{\Gamma(i+j+1)}{\Gamma(i+j+2-\alpha)}.
  \end{equation*}
  Then the following relations are true:
  \begin{itemize}
    \item[(a)] $\psi(i+1,j)=\psi(i,j)+\alpha \phi(i,j)$,\,\, for every\,\, $i,j\in\mathbb{N}$;
    \item[(b)] $\phi(i,j)>0$,\,\, for\,\, $i\in\mathbb{N}$ and $j\in\mathbb{N}^*$;
    \item[(c)] $\phi(i,j+1)=\phi(i,j)+\phi(i+j,1)$,\,\, for every\,\, $i,j\in\mathbb{N}$;
    \item[(d)]$\phi(i,j)=\sum_{k=0}^{j-1}{\phi(i+k,1)}$,\,\, for every\,\, $i,j\in\{1,2,\ldots,n\}$,\,\, where $n\in\mathbb{N}^*$;
    \item[(e)]$\psi(i,j)= \psi(1,1)-\alpha \phi(0,1)+\alpha \left[\sum_{k=0}^{i-1}{\sum_{l=0}^{j-1}{\phi(k+l,1)}}\right]$, for every \\$i,j\in\{1,2,\ldots,n\}$, where $n\in\mathbb{N}^*$.
  \end{itemize}
\end{proposition}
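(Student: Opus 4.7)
The plan is to prove the five items essentially in the order listed, as each builds on the previous.

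For item (a), I would observe that the identity is, up to notation, already contained in equation \eqref{Gamma2222} from the proof of Proposition \ref{theo2}: a direct computation using $\Gamma(z+1)=z\Gamma(z)$ yields
$$\psi(i+1,j)-\psi(i,j)=\alpha\left(\frac{\Gamma(i+1)}{\Gamma(i+2-\alpha)}-\frac{\Gamma(i+j+1)}{\Gamma(i+j+2-\alpha)}\right),$$
which is exactly $\alpha\,\phi(i,j)$. Item (b) is then immediate: $\phi(i,j)>0$ is equivalent to inequality \eqref{Gamma3333}, which was verified using Alzer's Theorem by setting $x=i+1$, $a_1=0$, $a_2=j+1-\alpha$, $b_1=1-\alpha$, $b_2=j$ (for $j\ge 1$, so that hypothesis (b) of Theorem \ref{alzer} is satisfied).

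For item (c), I would carry out the direct telescoping computation: expand both $\phi(i,j)$ and $\phi(i+j,1)$, notice that the term $\Gamma(i+j+1)/\Gamma(i+j+2-\alpha)$ cancels, and obtain exactly $\phi(i,j+1)$. Item (d) then follows by a one-line induction on $j$: the base $j=1$ is tautological, and the inductive step applies (c) to write
$$\phi(i,j+1)=\phi(i,j)+\phi(i+j,1)=\sum_{k=0}^{j-1}\phi(i+k,1)+\phi(i+j,1)=\sum_{k=0}^{j}\phi(i+k,1).$$

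Item (e) is the most delicate and will be the main piece of bookkeeping. I would first iterate (a) in the first argument to obtain
$$\psi(i,j)=\psi(1,j)+\alpha\sum_{k=1}^{i-1}\phi(k,j),$$
and, exploiting the symmetry $\psi(p,q)=\psi(q,p)$ together with (a) applied in the second slot, write
$$\psi(1,j)=\psi(1,1)+\alpha\sum_{l=1}^{j-1}\phi(l,1).$$
Substituting (d) into the inner sum $\phi(k,j)=\sum_{l=0}^{j-1}\phi(k+l,1)$ and combining with the $l=0$ contributions to produce a joint sum over $k\in\{0,\dots,i-1\}$ and $l\in\{0,\dots,j-1\}$, the extra term $-\alpha\phi(0,1)$ appears precisely because the index $k$ starts at $1$ in the first iteration rather than at $0$. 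After collecting, the claimed formula follows. The main obstacle here is keeping track of the index ranges so that the ``missing'' $k=0$ contribution is correctly accounted for by the $-\alpha\phi(0,1)$ correction; apart from this indexing care, no new analytic ingredient beyond (a), (c) and (d) is needed.
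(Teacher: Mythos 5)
Your proposal is correct and follows essentially the same route as the paper: items (a) and (b) are read off from the computations \eqref{Gamma2222} and \eqref{Gamma3333} already carried out in Proposition \ref{theo2}, (c) is a direct cancellation, (d) is the telescoping consequence of (c), and (e) is obtained by iterating (a), invoking the symmetry $\psi(1,j)=\psi(j,1)$, inserting (d), and absorbing the missing $k=0$ contribution via the $-\alpha\phi(0,1)$ correction. The index bookkeeping in your sketch of (e) is exactly the paper's ``adding and subtracting $\alpha\phi(0,1)$ and reorganizing the sums.''
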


\begin{proof}
Items $(a)$ and $(b)$ are direct consequences of Proposition \ref{theo2}. Item $(c)$ is just a straightforward computation and item $(d)$ is a consequence of item $(c)$.

Now we focus in the proof of item $(e)$. Assume that $i\geq2$ and $j\geq2$ (the case $i=1$ or $j=1$ is obtained from analogous arguments). Then, item $(a)$ ensures that
$$\psi(i,j) = \psi(1,j)+\alpha \left[\sum_{k=1}^{i-1}{\phi(k,j)}\right]= \psi(1,1)+\alpha \left[\sum_{l=1}^{j-1}{\phi(l,1)}\right]+\alpha \left[\sum_{k=1}^{i-1}{\phi(k,j)}\right],$$
since $\psi(1,j)=\psi(j,1)$.

But item $(d)$ allows us to conclude
$$\psi(i,j) = \psi(1,1)+\alpha \left[\sum_{l=1}^{j-1}{\phi(l,1)}\right] +\alpha \left[\sum_{k=1}^{i-1}{\sum_{l=0}^{j-1}{\phi(k+l,1)}}\right].$$

By adding and subtracting $\alpha\phi(0,1)$ from the right side of the above equality, and by reorganizing the sums, we achieve item $(e)$.
 \end{proof}

Since $\psi(i,j)$ is symmetric and considering item $(a)$ of Proposition \ref{lem2}, inequality \eqref{hi2} can be reinterpreted as
\begin{equation*}
  \frac{\psi(j,n+1)+\alpha \phi(j,n+1)}{\psi(j,i)+\alpha\phi(j,i)}\geq\frac{\psi(j,n+1)}{\psi(i,j)},
\end{equation*}
which is equivalent to
\begin{equation}\label{hA3}
  \frac{\psi(i,j)}{\phi(j,i)}\geq\frac{\psi(j,n+1)}{\phi(j,n+1)},
\end{equation}
for every $i,j\in\{1,2,\ldots,n\}$ and $n\in \mathbb{N}^*$.

However, inequality \eqref{hA3} can be derived directly from 
\begin{equation}\label{m2}
  \frac{\psi(i,j)}{ \phi(j,i)}\geq\frac{\psi(j,i+1)}{ \phi(j,i+1)},
\end{equation}
if we prove it for every $i,j\geq1$. Finally, observe that items $(a)$ and $(c)$ of Proposition \ref{lem2} and a straightforward computation ensures the equivalence of \eqref{m2} and
\begin{equation}\label{m5}
  \psi(i,j)\phi(j+i,1)\geq\alpha \phi(i,j)\phi(j,i),
\end{equation}
for every $i,j\geq1$.

Our final equivalent inequality, which is obtained by applying items $(d)$ and $(e)$ of Proposition \ref{lem2} in inequality \eqref{m5}, is given by
\begin{multline}\label{m6}
   \left[\psi(1,1)-\alpha \phi(0,1)\right]\phi(i+j,1)\\+\alpha \left[\sum_{k=0}^{i-1}{\sum_{l=0}^{j-1}{\phi(k+l,1)\phi(i+j,1)-\phi(i+l,1)\phi(j+k,1)}}\right]\geq0.
\end{multline}

 Now, item $(b)$ of Proposition \ref{lem2} allows us to conclude that
 $$\left[\psi(1,1)-\alpha \phi(0,1)\right]\phi(i+j,1)>0\Longleftrightarrow \psi(1,1)-\alpha \phi(0,1)>0.$$
 But since
 $$\psi(1,1)-\alpha \phi(0,1)>0\Longleftrightarrow (2-\alpha)(1-\alpha)>0,$$
 we conclude that the first term of \eqref{m6} is positive.

 Thus, to prove that inequality \eqref{m6} holds, and therefore that inequality \eqref{psi1} is true, we present our last proposition.

\begin{proposition}\label{lem33}
  Given $i,j\in\mathbb{N}$ and $k,l$ such that $0\leq k\leq i-1$ and $0\leq l\leq j-1$, then
\begin{equation}\label{A0}
  {\phi(k+l,1)\phi(i+j,1)-\phi(i+l,1)\phi(j+k,1)}>0.
\end{equation}
\end{proposition}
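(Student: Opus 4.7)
The plan is to reduce inequality \eqref{A0} to the strict positivity of a suitable double integral by exploiting a Beta-integral representation of $\phi(\cdot,1)$.

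First I would compute $\phi(m,1)$ explicitly. Using $\Gamma(m+2)=(m+1)\Gamma(m+1)$ and $\Gamma(m+3-\alpha)=(m+2-\alpha)\Gamma(m+2-\alpha)$ in the definition of $\phi$, a short algebraic manipulation gives
$$\phi(m,1)=\frac{(1-\alpha)\,\Gamma(m+1)}{\Gamma(m+3-\alpha)},\qquad m\in\mathbb{N}.$$
Recognizing the right-hand side as a multiple of the Beta function $B(m+1,2-\alpha)$, this becomes
$$\phi(m,1)=C\int_0^1 t^{m}(1-t)^{1-\alpha}\,dt,\qquad C:=\frac{1-\alpha}{\Gamma(2-\alpha)}>0.$$

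Next I would write both products appearing in \eqref{A0} as double integrals over $[0,1]^2$ against the positive weight $w(s,t):=(1-s)^{1-\alpha}(1-t)^{1-\alpha}$, and symmetrize each product via the change of variables $s\leftrightarrow t$. The difference then takes the form
$$\phi(k+l,1)\phi(i+j,1)-\phi(i+l,1)\phi(j+k,1)=\frac{C^{2}}{2}\int_{0}^{1}\!\!\int_{0}^{1} w(s,t)\,\Delta(s,t)\,ds\,dt,$$
where $\Delta(s,t):=s^{k+l}t^{i+j}+t^{k+l}s^{i+j}-s^{i+l}t^{j+k}-t^{i+l}s^{j+k}$.

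The crucial step is then factoring $\Delta$. Setting $p:=i-k\geq 1$ and $q:=j-l\geq 1$, elementary algebra gives
$$\Delta(s,t)=(st)^{k+l}(t^{p}-s^{p})(t^{q}-s^{q}).$$
Since $p,q\geq 1$, each trailing factor has the same sign as $(t-s)$, so their product is non-negative on $[0,1]^{2}$ and strictly positive on $\{(s,t)\in(0,1)^{2}:s\neq t\}$, a set of positive Lebesgue measure. Combined with the strict positivity of $w$ and of $C^{2}$, this yields the strict inequality in \eqref{A0}. The only genuinely delicate moment is spotting the factorization of $\Delta$; once it is in hand, the argument collapses to the trivial observation that a product of two differences of equal sign is non-negative.
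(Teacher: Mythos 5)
Your proposal is correct, and it takes a genuinely different route from the paper. You both start from the same explicit formula $\phi(m,1)=(1-\alpha)\Gamma(m+1)/\Gamma(m+3-\alpha)$, but from there the paper rewrites \eqref{A0} as an inequality between two ratios of Gamma functions, introduces $f(s)=\Gamma(x+s)\Gamma(y+s)/\bigl(\Gamma(z+s)\Gamma(w+s)\bigr)$ with $x=k+l+1$, $y=i+j+1$, $z=i+l+1$, $w=j+k+1$, and proves $f'(s)<0$ by telescoping the digamma function, so that $f(0)>f(2-\alpha)$ gives the claim. You instead represent $\phi(m,1)$ as a Beta integral, symmetrize the products into a double integral, and factor the resulting kernel as $(st)^{k+l}(t^{p}-s^{p})(t^{q}-s^{q})$ with $p=i-k\geq1$, $q=j-l\geq1$; I checked that this factorization does reproduce $\Delta(s,t)$ exactly (using $k+l+p=i+l$, $k+l+q=j+k$, $k+l+p+q=i+j$), and the two trailing factors indeed share the sign of $t-s$, so the integrand is nonnegative and strictly positive off the diagonal, yielding the strict inequality. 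Both arguments ultimately exploit the same majorization structure $x+y=z+w$ with $x\le z,w\le y$; your Chebyshev-correlation argument is more elementary (no digamma identities needed) and makes the positivity mechanism transparent, while the paper's monotonicity argument yields the slightly stronger fact that the Gamma-ratio $f(s)$ is decreasing in the shift $s$ on all of $[0,\infty)$, not just the comparison between $s=0$ and $s=2-\alpha$.
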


\begin{proof}  By definition $\phi(i,1)=(1-\alpha)\Gamma(i+1)/\Gamma(i+3-\alpha)>0$ and therefore
\begin{equation}\label{A1}
  \phi(k+l,1)\phi(i+j,1)=(1-\alpha)^2\left[\frac{\Gamma(k+l+1)\Gamma(i+j+1)}{\Gamma(k+l+3-\alpha)\Gamma(i+j+3-\alpha)}\right]
\end{equation}
and
\begin{equation}\label{A2}
  \phi(i+l,1)\phi(j+k,1)=(1-\alpha)^2\left[\frac{\Gamma(i+l+1)\Gamma(j+k+1)}{\Gamma(i+l+3-\alpha)\Gamma(j+k+3-\alpha)}\right].
\end{equation}
Hence, by considering \eqref{A1} and \eqref{A2}, inequality \eqref{A0} can be rewritten as
\begin{equation*}
 \frac{\Gamma(k+l+1)\Gamma(i+j+1)}{\Gamma(k+l+3-\alpha)\Gamma(i+j+3-\alpha)}>\frac{\Gamma(i+l+1)\Gamma(j+k+1)}{\Gamma(i+l+3-\alpha)\Gamma(j+k+3-\alpha)},
\end{equation*}
which is equivalent to
\begin{equation}\label{A4}
 \frac{\Gamma(k+l+1)\Gamma(i+j+1)}{\Gamma(i+l+1)\Gamma(j+k+1)}>\frac{\Gamma(k+l+3-\alpha)\Gamma(i+j+3-\alpha)}{\Gamma(i+l+3-\alpha)\Gamma(j+k+3-\alpha)}.
\end{equation}

Now, consider the infinitely differentiable function $f:[0,\infty)\rightarrow\mathbb{R}$ given by
\begin{equation*}
 f(s):=\frac{\Gamma(x+s)\Gamma(y+s)}{\Gamma(z+s)\Gamma(w+s)},
\end{equation*}
for fixed numbers $x,y,z,w\in(0,\infty)$.

By considering item (a) of Remark \ref{remrem1}, we conclude that the derivative of $f(s)$ is given by
\begin{equation*}
 \frac{\Gamma(x+s)\Gamma(y+s)\Gamma(z+s)\Gamma(w+s)\left[\digamma(x+s)+\digamma(y+s)-\digamma(z+s)-\digamma(w+s)\right]}{\Gamma(z+s)^2\Gamma(w+s)^2},
\end{equation*}
where $\digamma(z)$ denotes de digamma function. Then, $f'(s)<0$ if, and only if,
$$\digamma(x+s)+\digamma(y+s)-\digamma(z+s)-\digamma(w+s)<0.$$

Firstly observe that item (b) of Remark \ref{remrem1} ensures the equality
$$\digamma(p+s)=\left[\dfrac{1}{p-1+s}\right]+\digamma(p-1+s)=\ldots=\sum_{q=0}^{p-1}{\left[\dfrac{1}{q+s}\right]}+\digamma(s),$$
for each $p\in\mathbb{N}^*$. Thus, by choosing $x=k+l+1$, $y=i+j+1$, $z=i+l+1$ and $w=j+k+1$, the above equality allows us to conclude that
\begin{multline}\label{ultimoteste}
  \digamma(x+s)+\digamma(y+s)-\digamma(z+s)-\digamma(w+s)\\=\sum_{q=0}^{k+l}{\left[\frac{1}{q+s}\right]}+\sum_{q=0}^{i+j}{\left[\frac{1}{q+s}\right]}-\sum_{q=0}^{i+l}{\left[\frac{1}{q+s}\right]}-\sum_{q=0}^{j+k}{\left[\frac{1}{q+s}\right]}.
\end{multline}

Now, since by hypotheses $k\leq i-1$ and $l\leq j-1$, we rewrite \eqref{ultimoteste} as
\begin{equation*}
  \digamma(x+s)+\digamma(y+s)-\digamma(z+s)-\digamma(w+s)=\sum_{q=i+l+1}^{i+j}{\left[\frac{1}{q+s}\right]}-\sum_{q=k+l+1}^{k+j}{\left[\frac{1}{q+s}\right]},
\end{equation*}
and therefore,
\begin{align*}
  \digamma(x+s)+\digamma(y+s)-\digamma(z+s)-\digamma(w+s)&=\sum_{q=l+1}^{j}{\left[\frac{1}{q+i+s}\right]}-\sum_{q=l+1}^{j}{\left[\frac{1}{q+k+s}\right]}\\
  &=\sum_{q=l+1}^{j}{\left[\frac{1}{q+i+s}-\frac{1}{q+k+s}\right]}<0.
\end{align*}

The above computations ensure that $f'(s)<0$, that is, $f$ is a decreasing function in $s$. Then, since $\alpha\in(0,1)$, we have $f(0)>f(2-\alpha)$ which is exactly inequality \eqref{A4}. This completes the proof.\end{proof}

Finally we are able to present the whole proof of Theorem \ref{polinomio}.

\begin{proof}[Proof of Theorem~\ref{polinomio}] We begin by proving inequality \eqref{des2222}. Recall that item (a) of Proposition \ref{ultiulti} ensures that it is enough for us to prove that $\mathcal{A}$, defined in \eqref{des2121}, is a positive definite matrix. Now, observe that Proposition \ref{propprop1} ensures that $\mathcal{A}$ is a positive definite matrix if, and only if, $\widetilde{\mathcal{A}}_n$ is a positive definite matrix, for every $n\geq2$.

From now on, we proceed with an induction argument. At first notice that
$$\widetilde{\mathcal{A}}_{2}=\left[
  \begin{array}{cc}
    \psi(2,2) & \psi(1,2) \\\\
    \psi(2,1) & \psi(1,1) \\
  \end{array}
\right],$$
and the determinant of its leading principal minors are
$$\psi(2,2)=4\dfrac{(6-\alpha)(1-\alpha)}{\Gamma(5-\alpha)}>0$$
and
$$\psi(1,1)\psi(2,2)-\psi(1,2)^2=
\dfrac{\alpha(1-\alpha)^2(2-\alpha)(6-\alpha)}{\Gamma(4-\alpha)\Gamma(5-\alpha)}>0.$$
Thus, $\widetilde{\mathcal{A}}_2$ is a positive definite matrix.

Now, assume that $\widetilde{\mathcal{A}}_n$ is a positive definite matrix and let us show that $\widetilde{\mathcal{A}}_{n+1}$ is positive definite matrix. Recall that inequality \eqref{psi111} derives from Proposition \ref{lem33} (as it was discussed throughout this subsection), and therefore Corollary \ref{finalimplementado} ensures that $\widetilde{\mathcal{A}}_{n+1}$ is positive definite matrix. This finishes the induction argument, completing in this way the proof of inequality \eqref{des2222}.

Lastly, \eqref{des1111} derives from Corollary \ref{ultiultiultiulti}.\end{proof}

\begin{remark}\label{esperoqueultimo} \begin{itemize}\item[(a)] At first we emphasize that the proof developed in this section, actually, ensures that inequality \eqref{des1111} holds strictly when $P(t)$ is not the null polynomial function.
\item[(b)] On the other hand, inequality \eqref{des2222} does not own this property; recall that the Caputo fractional derivative of a constant function is indeed zero (see the identity \eqref{continha2}), therefore \eqref{des2222} is strict just when we consider non constant polynomial functions.
\item[(c)] By continuity, inequality \eqref{des2222} holds for every $t\geq t_0$. The same argument does not hold for \eqref{des1111}. This is due to the fact that Riemann-Liouville fractional derivative sometimes is not defined at $t_0$; and example of this is the constant polynomial function.
\end{itemize}
\end{remark}

We end this section by presenting two results that ensure the sharpness of Theorem \ref{polinomio}; in fact, they are obtained as a consequence of Theorem \ref{gautschi}.

\begin{lemma}\label{gautschiaux} Define the values $\varphi_k:=\Gamma(k+1)/\Gamma(k+1-\alpha)$, for each $k\in\mathbb{N}$.
\begin{itemize}
\item[(a)] The sequence $\{\varphi_k\}_{k=1}^\infty$ is increasing and $\lim_{k\rightarrow\infty}{\varphi_k}=\infty.$

\item[(b)] The sequence $\{{\varphi_{2k}}/{\varphi_k}\}_{k=1}^\infty$ is increasing and converges to $2^\alpha$.\end{itemize}
\end{lemma}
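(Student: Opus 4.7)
The plan is to handle the two parts by direct ratio computations, using the telescoping structure $\varphi_k/\varphi_{k-1} = k/(k-\alpha)$ that comes from the identity $z\Gamma(z) = \Gamma(z+1)$, and to identify the limits via Gautschi's inequality (Theorem \ref{gautschi}).

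For part (a), I would first observe that
\[
\frac{\varphi_{k+1}}{\varphi_k} = \frac{\Gamma(k+2)}{\Gamma(k+2-\alpha)}\cdot\frac{\Gamma(k+1-\alpha)}{\Gamma(k+1)} = \frac{k+1}{k+1-\alpha} > 1,
\]
since $\alpha \in (0,1)$; this immediately gives monotonicity. For the limit, I would apply Gautschi's inequality with $s = 1-\alpha$ and $n = k$, which yields $\Gamma(k+1-\alpha)/\Gamma(k+1) \leq k^{-\alpha}$; taking reciprocals gives $\varphi_k \geq k^\alpha$, and so $\varphi_k \to \infty$.

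For part (b), I would iterate the ratio from part (a) to obtain the product representation
\[
\frac{\varphi_{2k}}{\varphi_k} = \prod_{j=k+1}^{2k}\frac{j}{j-\alpha}.
\]
To show monotonicity in $k$, I would compute the ratio of consecutive terms and reduce it to proving
\[
(2k+1)(2k+2)(k+1-\alpha) \geq (2k+1-\alpha)(2k+2-\alpha)(k+1).
\]
After setting $t = k+1$ and dividing by $t$, this simplifies to checking that $2(2t-1)(t-\alpha) - (2t-1-\alpha)(2t-\alpha) \geq 0$. The expected outcome is that this difference equals exactly $\alpha(1-\alpha)$, which is strictly positive; this is the main (though elementary) computational step. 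For the limit, I would sandwich: by Gautschi's inequality applied to both $\varphi_{2k}$ and $\varphi_k$, one obtains
\[
\frac{(2k)^\alpha}{(k+1)^\alpha} \leq \frac{\varphi_{2k}}{\varphi_k} \leq \frac{(2k+1)^\alpha}{k^\alpha},
\]
and both extreme quantities tend to $2^\alpha$ as $k \to \infty$, so the squeeze theorem finishes the proof.

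The only real obstacle is the algebraic simplification needed for monotonicity in part (b); everything else is a direct consequence of the gamma function functional equation and Gautschi's inequality. I would therefore present the computation for (b) carefully, making sure to exhibit the factor $\alpha(1-\alpha)$ explicitly so the sign is transparent.
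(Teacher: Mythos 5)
Your proposal is correct and follows essentially the same route as the paper: monotonicity in both parts reduces, via the functional equation $z\Gamma(z)=\Gamma(z+1)$, to the very same elementary inequality $2(2k+1)(k+1-\alpha)>(2k+1-\alpha)(2k+2-\alpha)$, and the limits come from Gautschi's inequality. The only difference is presentational — you make explicit the $\alpha(1-\alpha)$ difference and the squeeze bounds $n^\alpha\leq\varphi_n\leq(n+1)^\alpha$, which the paper leaves as "holds true" and "a consequence of Theorem \ref{gautschi}".
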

\begin{proof} $(a)$ At first, observe that
$$\varphi_{k+1}>\varphi_k\Longleftrightarrow \dfrac{k+1}{k+1-\alpha}>1,$$
which holds for each $k\in\mathbb{N}$. Now Theorem \ref{gautschi} implies that $\lim_{k\rightarrow\infty}{\varphi_k}=\infty.$

$(b)$ Note that
    $$\dfrac{\varphi_{2(k+1)}}{\varphi_{k+1}}>\dfrac{\varphi_{2k}}{\varphi_k}
    \quad\Longleftrightarrow\quad\dfrac{\Gamma(2k+3)\Gamma(k+1)\Gamma(k+2-\alpha)\Gamma(2k+1-\alpha)}{\Gamma(2k+1)\Gamma(k+2)\Gamma(k+1-\alpha)\Gamma(2k+3-\alpha)}>1.$$
    By classical properties of Gamma function (see item $(c)$ of Definition \ref{gammagamma}), the latest inequality is equivalent to
    $$\dfrac{2(2k+1)(k+1-\alpha)}{(2k+1-\alpha)(2k+2-\alpha)}>1.$$
    Since last inequality holds true, we conclude the first part of the proof. The second part is a consequence of Theorem \ref{gautschi}.
\end{proof}

\begin{theorem} Assume that $\lambda\in\mathbb{R}\setminus\{2\}$.
\begin{itemize}
\item[(a)] Then, there exists a polynomial function with real coefficients $P_\lambda(t)$ satisfying
\begin{equation}\label{finalfinaltest00}
D_{t_0,t}^\alpha\big[P_\lambda(t)\big]^2>\lambda\Big[D_{t_0,t}^\alpha P_\lambda(t)\Big]P_\lambda(t),\quad \textrm{for some }t>t_0.
\end{equation}
\item[(b)] Also, there exists a polynomial function with real coefficients $Q_\lambda(t)$ satisfying
\begin{equation}\label{finalfinaltest}
cD_{t_0,t}^\alpha\big[Q_\lambda(t)\big]^2>\lambda\Big[cD_{t_0,t}^\alpha Q_\lambda(t)\Big]Q_\lambda(t),\quad \textrm{for some }t>t_0.
\end{equation}
\end{itemize}
\end{theorem}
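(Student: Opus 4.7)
The plan is to produce explicit polynomial witnesses for each case. For the Caputo case (b) the construction is direct because $cD^\alpha$ annihilates constants; for the Riemann--Liouville case (a) the degree of the polynomial must grow as $\lambda$ approaches $2$, since the extra singular term $P(t_0)^2 s^{-\alpha}/\Gamma(1-\alpha)$ introduced when $D^\alpha$ hits a constant must be dominated.

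For statement (b), I take $Q_\lambda(t) = 1 + \sigma(t - t_0)$ with $\sigma := \operatorname{sign}(2 - \lambda)$. Writing $s := t - t_0$ and $\varphi_k := \Gamma(k+1)/\Gamma(k+1-\alpha)$, equation \eqref{continha2} and Proposition~\ref{properties}(vi) yield
\begin{equation*}
cD^{\alpha}_{t_0,t} Q_\lambda^2(t) - \lambda\, cD^{\alpha}_{t_0,t} Q_\lambda(t)\, Q_\lambda(t)
= \sigma(2 - \lambda)\varphi_1\, s^{1-\alpha} + (\varphi_2 - \lambda\varphi_1)\, s^{2-\alpha},
\end{equation*}
whose leading coefficient is strictly positive by the choice of $\sigma$; hence the right-hand side is positive for every small $s > 0$, which is \eqref{finalfinaltest} at those $t$.

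For statement (a), I try $P_\lambda(t) = a_0 + a_k(t-t_0)^k$ with $a_0, a_k \in \{-1, +1\}$ and $k \in \mathbb{N}^*$ to be chosen depending on $\lambda$. Setting $\kappa := 1/\Gamma(1-\alpha)$ and $X := s^k$, equation \eqref{continha1} together with Proposition~\ref{properties}(iii) gives
\begin{equation*}
D^{\alpha}_{t_0,t}P_\lambda^2(t) - \lambda\, D^{\alpha}_{t_0,t}P_\lambda(t)\, P_\lambda(t)
= s^{-\alpha}\bigl(A X^2 + B X + C\bigr),
\end{equation*}
where $A = a_k^2(\varphi_{2k} - \lambda\varphi_k)$, $B = a_0 a_k[(2-\lambda)\varphi_k - \lambda\kappa]$, and $C = a_0^2\kappa(1-\lambda)$. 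The task reduces to exhibiting an $X > 0$ with $f(X) := AX^2 + BX + C > 0$. Here Lemma~\ref{gautschiaux} is the decisive tool: since $\varphi_k \to \infty$ and $\varphi_{2k}/\varphi_k \to 2^\alpha$, a direct expansion shows that $(B^2 - 4AC)/(a_0^2 a_k^2 \varphi_k^2) \to (2-\lambda)^2$ as $k \to \infty$, and this limit is strictly positive because $\lambda \neq 2$, so $B^2 - 4AC > 0$ for every sufficiently large $k$. With the discriminant secured, a short sign analysis finishes the proof: if $\lambda < 1$, then $C > 0$ and $f(X) > 0$ for all small $X > 0$; if $\lambda \in [1, 2^\alpha)$, one can choose $k$ so large that $A > 0$, and then $f(X) > 0$ for all large $X$; finally, if $\lambda \in [2^\alpha, 2) \cup (2, +\infty)$, then both $A < 0$ and $C < 0$ for large $k$, the two real roots of $f$ have positive product $C/A$, and choosing $\operatorname{sign}(a_0 a_k)$ to agree with $\operatorname{sign}(2-\lambda)$ forces $B > 0$, so both roots are positive and $f > 0$ on the interval between them.

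I expect the main obstacle to be this last regime: when $\lambda$ is close to $2$ (on either side), $f(0) = C < 0$ and $f(X) \to -\infty$ as $X \to \infty$, so the positivity of $f$ must be squeezed entirely out of the cross term $B$, whose effective size against $A$ and $C$ is controlled only through the gap $(2-\lambda)^2$ via the Gautschi asymptotic of Lemma~\ref{gautschiaux}. This is also precisely why the hypothesis $\lambda \neq 2$ cannot be dropped: at $\lambda = 2$ the dominant contribution $(2-\lambda)^2\varphi_k^2$ to the discriminant collapses and the argument degenerates, in agreement with the strict inequalities of Theorem~\ref{polinomio} noted in Remark~\ref{esperoqueultimo}(a).
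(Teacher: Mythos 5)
Your proposal is correct and follows essentially the same route as the paper: degree-one witnesses $1\pm(t-t_0)$ for the Caputo case, and $\pm1\pm(t-t_0)^k$ with $k$ large for the Riemann--Liouville case, reducing to a quadratic in $X=(t-t_0)^k$ whose positivity is extracted from Lemma~\ref{gautschiaux} exactly as in the paper's vertex-evaluation argument (your observation that $(B^2-4AC)/\varphi_k^2\to(2-\lambda)^2$ is just a cleaner packaging of the paper's discriminant computation). The only cosmetic caveat is that in the regime $\lambda\in[2^\alpha,2)$ the sign choice $\operatorname{sign}(a_0a_k)=\operatorname{sign}(2-\lambda)$ yields $B>0$ only once $k$ is large enough that $(2-\lambda)\varphi_k>\lambda\kappa$, which Lemma~\ref{gautschiaux}(a) supplies and which matches the paper's condition $2\varphi_k/(\varphi_k+\varphi_0)>\lambda$.
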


\begin{proof} For simplicity we assume that $t_0=0$. Nonetheless, it worths to point out that the general situation follows the same ideas used bellow.
\begin{itemize}
\item[(b)] To obtain the proof of this item, we split it in three cases.
\begin{itemize}
\item[(i)] If $\lambda\leq2/(2-\alpha)$, choose $Q_\lambda(t):=t+1$ and observe that \eqref{continha2}, Proposition \ref{properties} and Gamma function properties ensure the equivalence between \eqref{finalfinaltest} and the inequality
    \begin{equation}\label{finalfinaltest2}
    t\big[2-\lambda(2-\alpha)\big]+(2-\alpha)(2-\lambda)>0,
    \end{equation}
    which holds for every $t>0$.
\item[(ii)] For $2/(2-\alpha)<\lambda<2$ and $Q_\lambda(t):=t+1$, identity \eqref{finalfinaltest2} holds for every %
    $$0<t<\dfrac{(2-\alpha)(2-\lambda)}{\lambda(2-\alpha)-2}.$$
\item[(iii)] Finally, if $\lambda>2$ and we consider $Q_\lambda(t):=t-1$, then \eqref{finalfinaltest} is equivalent to
    \begin{equation*}
    t\big[2-\lambda(2-\alpha)\big]+(2-\alpha)(\lambda-2)>0,
    \end{equation*}
    which holds for every
    $$0<t<\dfrac{(2-\alpha)(\lambda-2)}{\lambda(2-\alpha)-2}.$$
\end{itemize}

\item[(a)]  We also split this proof in three cases. Notice that in this situation we need another approach.

\begin{itemize}
\item[(i)] Assume that $\lambda<2^\alpha$ is fixed and consider $P_k(t):=t^k+1$, where $k\in\mathbb{N}^*$. Observe that the existence of $\tilde{t}>0$ such that \eqref{finalfinaltest00} holds, is equivalent to
\begin{equation}\label{classicalcalc}f_k(\tilde{t}):=\underbrace{\left[\varphi_{2k}-\lambda\varphi_k\right]}_{A_k}\big(\,\tilde{t}\,\big)^{2k}+
\underbrace{\big[(2-\lambda)\varphi_k-\lambda\varphi_0\big]}_{B_k}\big(\,\tilde{t}\,\big)^k+\underbrace{(1-\lambda)\varphi_0}_{C}>0,\end{equation}
where $\varphi_k:=\Gamma(k+1)/\Gamma(k+1-\alpha)$, for each $k\in\mathbb{N}^*$.

Since Lemma \ref{gautschiaux} ensures that $\{{\varphi_{2k}}/{\varphi_k}\}$ is an increasing sequence that converges to $2^\alpha$, there exists $k_0=k_0(\lambda)\in\mathbb{N}$ such that ${\varphi_{2k}}/{\varphi_k}>\lambda$, for every $k\geq k_0$. Hence, we conclude that $A_k>0$, for every $k\geq k_0$, and therefore that
$$\lim_{t\rightarrow\infty}{f_{k_0}(t)}=\infty.$$
This implies that there exists $t_0> 0$ such that $f_{k_0}(t)>0$, for all $t\in [t_0,\infty)$.

\item[(ii)] For a fixed value $2^\alpha\leq\lambda<2$, consider $P_k(t):=t^k+1$. Like before, inequality \eqref{finalfinaltest00} is equivalent to \eqref{classicalcalc}, but now $A_k<0$, for every $k\in\mathbb{N}^*$. On the other hand, observe that
$$B_k>0\quad\Longleftrightarrow \quad\dfrac{2\varphi_k}{\varphi_k+\varphi_0}>\lambda.$$

Since Lemma \ref{gautschiaux} ensures that $\{2\varphi_k/(\varphi_k+\varphi_0)\}_{k=1}^\infty$ is an increasing sequence that satisfies
$$\dfrac{2\varphi_k}{\varphi_k+\varphi_0}\rightarrow 2,$$
when $k\rightarrow \infty$, there should exist a natural number $k_1=k_1(\lambda)$ such that
$$\dfrac{2\varphi_k}{\varphi_k+\varphi_0}>\lambda,$$
for any $k\geq k_1$.

By taking $\tilde{t}_k:=\sqrt[k]{-B_{k}/(2A_{k})}>0$, for any $k\geq k_1$, we compute that
%
$$f_{k}(\tilde{t}_k)>0\quad\Longleftrightarrow \quad B_{k}^2-4A_{k}C>0.$$

However, $B_{k}^2-4A_{k}C>0$ can be reinterpreted as
$$(2-\lambda)^2\varphi_{k}^2-2\lambda^2\varphi_0\varphi_{k}+\lambda^2\varphi_0-4(1-\lambda)\varphi_0\varphi_{2k}>0,$$
which in turn, is equivalent to
$$(2-\lambda)^2\varphi_{k}-2\lambda^2\varphi_0+\lambda^2\left[\dfrac{\varphi_0}{\varphi_{k}}\right]-4(1-\lambda)\varphi_0\left[\dfrac{\varphi_{2k}}{\varphi_{k}}\right]>0.$$
By applying Lemma \ref{gautschiaux} we conclude that this inequality holds for sufficiently large values of $k$; this ensures that the proof of this item is completed.

\item[(iii)] For the case $\lambda>2$, consider $P_k(t):=t^k-1$ and observe that inequality \eqref{finalfinaltest00} is equivalent to
\begin{equation*}\underbrace{\left[\varphi_{2k}-\lambda\varphi_k\right]}_{A_k}t^{2k}-
\underbrace{\big[(2-\lambda)\varphi_k-\lambda\varphi_0\big]}_{B_k}t^k+\underbrace{(1-\lambda)\varphi_0}_{C}>0,\end{equation*}
for some $t>0$. By the conclusions obtained in the previous items, we already know that in this case $A_k<0$ and $-B_k>0$, for any $k\in\mathbb{N}$. But then, by the same arguments implemented in item $(ii)$ we conclude that this item holds true for sufficiently large values of $k$.
\end{itemize}
\end{itemize}
\end{proof}

\section{Main results}
\label{maintheorem}

Last section discussed the proof of a sharp inequality that relates Leibniz rule with the fractional derivatives (Caputo and Riemann-Liouville) of polynomial functions. However, these inequalities are valid in a much broader aspect. Observe that throughout all this section we assume that $\alpha\in(0,1)$ and $t_0<t_1$ are fixed real numbers.

\subsection{A first generalization of Theorem \ref{polinomio}}

 We begin with an auxiliary proposition that is recurrently used in this manuscript.

\begin{proposition}\label{samkokilbas} Assume that $X$ is a Banach space and $1\leq p\leq\infty$. Then, there exists a constant $K=K(p,\alpha,t_0,t_1)>0$, such that
$${\left\|J_{t_0,t}^\alpha f\right\|_{L^p(t_0,t_1;X)}}\leq K\left\|f\right\|_{L^p(t_0,t_1;X)}\,\,,$$
for any function $f\in L^p(t_0,t_1;X)$.
\end{proposition}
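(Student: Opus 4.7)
The plan is to split into three cases according to the value of $p$ and, in each, exploit the convolution representation $J_{t_0,t}^{\alpha}f(t)=(g_{\alpha}\ast f)(t)$ already recorded after Definition \ref{riemannint}, where $f$ is understood to be extended by zero outside $[t_0,t_1]$. Note that
\[
\|g_\alpha\|_{L^1(0,t_1-t_0;\mathbb{R})}=\frac{1}{\Gamma(\alpha)}\int_0^{t_1-t_0}s^{\alpha-1}\,ds=\frac{(t_1-t_0)^{\alpha}}{\Gamma(\alpha+1)},
\]
so a single constant
\[
K:=\frac{(t_1-t_0)^{\alpha}}{\Gamma(\alpha+1)}
\]
will turn out to work for $p=1$ and $p=\infty$, while the $1<p<\infty$ case gives the same $K$ via a Young-type inequality.

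First I would handle $p=1$. Starting from Definition \ref{riemannint}, applying the norm $\|\cdot\|_X$ inside the Bochner integral, and invoking Fubini--Tonelli (everything is non-negative and measurable) yields
\[
\int_{t_0}^{t_1}\|J_{t_0,t}^{\alpha}f(t)\|_X\,dt\leq\frac{1}{\Gamma(\alpha)}\int_{t_0}^{t_1}\|f(s)\|_X\int_{s}^{t_1}(t-s)^{\alpha-1}\,dt\,ds,
\]
and the inner integral equals $(t_1-s)^{\alpha}/\alpha\leq(t_1-t_0)^{\alpha}/\alpha$, giving the bound with constant $K$. Next, for $p=\infty$, I would simply pull the essential supremum out of the defining integral and compute $\int_{t_0}^{t}(t-s)^{\alpha-1}\,ds=(t-t_0)^{\alpha}/\alpha\leq(t_1-t_0)^{\alpha}/\alpha$, again yielding the same $K$.

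For the intermediate range $1<p<\infty$ my plan is to invoke Young's convolution inequality in the Bochner setting: with $f$ extended by zero, $J_{t_0,t}^{\alpha}f=g_{\alpha}\ast f$ and Young's inequality (proved by Minkowski's integral inequality applied to $(t,s)\mapsto g_{\alpha}(t-s)f(s)$, which is valid because $X$ is a Banach space and $g_{\alpha}\in L^1$) gives
\[
\|g_{\alpha}\ast f\|_{L^p(t_0,t_1;X)}\leq\|g_{\alpha}\|_{L^1(0,t_1-t_0;\mathbb{R})}\,\|f\|_{L^p(t_0,t_1;X)}=K\,\|f\|_{L^p(t_0,t_1;X)}.
\]
Alternatively one can argue by Hölder's inequality, writing $(t-s)^{\alpha-1}=(t-s)^{(\alpha-1)/p'}(t-s)^{(\alpha-1)/p}$, and then Fubini, which is the classical argument presented in \cite{SaKiMa1,KiSrTr1}.

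I do not foresee a genuine obstacle: the result is standard and the only mildly delicate point is the justification of the vector-valued version of Young/Minkowski, which is routine once one observes that for a measurable $f:[t_0,t_1]\to X$ the map $(t,s)\mapsto\|g_\alpha(t-s)f(s)\|_X$ is non-negative and jointly measurable, so scalar Fubini--Tonelli applies to the norm. Hence one could also simply appeal to \cite{SaKiMa1,KiSrTr1,Car1}, where this mapping property of the Riemann--Liouville integral is explicitly proved.
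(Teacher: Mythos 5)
Your proof is correct and is essentially the argument the paper itself invokes: the paper's proof consists of deferring to Samko--Kilbas--Marichev (p.~48), whose estimate is exactly the Young/Minkowski convolution bound $\|g_\alpha * f\|_{L^p}\le\|g_\alpha\|_{L^1}\|f\|_{L^p}$ that you carry out, adapted to the Bochner setting by passing the $X$-norm inside the integral. Your explicit constant $K=(t_1-t_0)^\alpha/\Gamma(\alpha+1)$ and the treatment of the endpoint cases are all consistent with that standard route.
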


\begin{proof} The proof is an adaptation (to Banach spaces) of Samko-Kilbas-Marichev \cite[page 48]{SaKiMa1}.
\end{proof}

Next we present a first improvement of Theorem \ref{polinomio}, which is not our most robust result, however, plays an important role in our forward proofs.

\begin{theorem}\label{c1functions} If $f$ belongs to $C^1([t_0,t_1];\mathbb{R})$, then
\begin{equation}\label{ultultrl}
D_{t_0,t}^\alpha\big[f(t)\big]^2\leq2\Big[D_{t_0,t}^\alpha f(t)\Big]f(t),\quad \textrm{for every }t\in (t_0,t_1],
\end{equation}
and
\begin{equation}\label{ultultcpt}
cD_{t_0,t}^\alpha\big[f(t)\big]^2\leq2\Big[cD_{t_0,t}^\alpha f(t)\Big]f(t),\quad \textrm{for every }t\in [t_0,t_1].
\end{equation}
\end{theorem}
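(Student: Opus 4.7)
The plan is to extend Theorem \ref{polinomio} from polynomials to $C^{1}$ functions by a density argument in the $C^{1}$ topology. Since $f' \in C([t_{0},t_{1}];\mathbb{R})$, the Weierstrass approximation theorem supplies polynomials $Q_{n}$ converging uniformly to $f'$ on $[t_{0},t_{1}]$. Setting
\[
P_{n}(t) := f(t_{0}) + \int_{t_{0}}^{t} Q_{n}(s)\,ds,
\]
produces a sequence of polynomial functions with $P_{n}(t_{0}) = f(t_{0})$, $P_{n}' = Q_{n} \to f'$ uniformly on $[t_{0},t_{1}]$, and consequently $P_{n} \to f$ uniformly as well. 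Hence $\bigl([P_{n}]^{2}\bigr)' = 2 P_{n} P_{n}' \to 2 f f' = (f^{2})'$ uniformly on $[t_{0},t_{1}]$.

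To prove \eqref{ultultcpt}, apply Theorem \ref{polinomio} to each polynomial $P_{n}$. Since $P_{n}, f \in C^{1}([t_{0},t_{1}];\mathbb{R}) \subset AC([t_{0},t_{1}];\mathbb{R})$, Remark \ref{caputo1} identifies the Caputo derivative with $J_{t_{0},t}^{1-\alpha}$ acting on the classical derivative. Proposition \ref{samkokilbas} (taken with $p = \infty$) guarantees that $J_{t_{0},t}^{1-\alpha}$ is bounded on $L^{\infty}$, so the uniform convergences $P_{n}' \to f'$ and $([P_{n}]^{2})' \to (f^{2})'$ propagate to uniform convergences $cD_{t_{0},t}^{\alpha} P_{n} \to cD_{t_{0},t}^{\alpha} f$ and $cD_{t_{0},t}^{\alpha} [P_{n}]^{2} \to cD_{t_{0},t}^{\alpha} [f]^{2}$ on $[t_{0},t_{1}]$. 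Passing to the limit in the polynomial inequality yields \eqref{ultultcpt} for $t \in (t_{0},t_{1}]$, and at $t = t_{0}$ the identity $cD_{t_{0},t}^{\alpha} f(t_{0}) = J_{t_{0},t}^{1-\alpha} f'(t_{0}) = 0$ makes both sides vanish.

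For \eqref{ultultrl}, invoke Remark \ref{caputo1supernovo} to write
\[
D_{t_{0},t}^{\alpha} P_{n}(t) = cD_{t_{0},t}^{\alpha} P_{n}(t) + \frac{(t-t_{0})^{-\alpha}}{\Gamma(1-\alpha)} P_{n}(t_{0}),
\]
together with the analogous identity for $[P_{n}(t)]^{2}$, both valid for $t \in (t_{0},t_{1}]$. Because $P_{n}(t_{0}) = f(t_{0})$ by construction and the Caputo derivatives converge as established above, we obtain $D_{t_{0},t}^{\alpha} P_{n}(t) \to D_{t_{0},t}^{\alpha} f(t)$ and $D_{t_{0},t}^{\alpha} [P_{n}(t)]^{2} \to D_{t_{0},t}^{\alpha} [f(t)]^{2}$ pointwise on $(t_{0},t_{1}]$ (uniform convergence cannot be expected due to the $(t-t_{0})^{-\alpha}$ singularity, but pointwise convergence suffices here). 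Passing to the limit in Theorem \ref{polinomio} at each fixed $t > t_{0}$ delivers \eqref{ultultrl}.

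The principal obstacle is not conceptual but technical: the approximants must be close to $f$ in the full $C^{1}$ norm, not merely uniformly, so that all four fractional expressions appearing in the inequality converge to their $f$-counterparts. The Weierstrass-plus-antidifferentiation construction resolves this in one stroke, after which the passage to the limit is essentially mechanical, relying only on the boundedness of $J_{t_{0},t}^{1-\alpha}$ on $L^{\infty}$.
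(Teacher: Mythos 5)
Your proof is correct and follows essentially the same route as the paper: approximate $f$ by polynomials in the $C^1$ topology, use $cD_{t_0,t}^{\alpha}=J_{t_0,t}^{1-\alpha}\circ\frac{d}{dt}$ together with the $L^{\infty}$-boundedness of $J_{t_0,t}^{1-\alpha}$ (Proposition \ref{samkokilbas}) to pass to the limit in Theorem \ref{polinomio}, and handle the Riemann--Liouville case via the relation of Remark \ref{caputo1supernovo}. Your only (harmless) refinement is arranging $P_n(t_0)=f(t_0)$ exactly so the singular boundary terms cancel, where the paper instead simply bounds $|P_k(t_0)-f(t_0)|$ and absorbs the $(t-t_0)^{-\alpha}$ factor at each fixed $t>t_0$.
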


\begin{proof} Since the polynomial functions are dense in $C^1([t_0,t_1];\mathbb{R})$ (with its standard topology), for every $f\in C^1([t_0,t_1];\mathbb{R})$ there exists a sequence of polynomial functions $\{P_k(t)\}_{k=1}^\infty$ such that
\begin{equation}\label{ini01imp}\sup_{s\in[t_0,t_1]}{\big|P_k(s)-f(s)\big|}+\sup_{s\in[t_0,t_1]}{\left|P^\prime_k(s)-f^\prime(s)\right|}\rightarrow 0,\end{equation}
when $k\rightarrow\infty$.

But then Remarks \ref{caputo1} and \ref{caputo1supernovo} ensure that
\begin{align*}\left|D_{t_0,t}^\alpha P_k(t)-D_{t_0,t}^\alpha f(t)\right|&\leq\left|J_{t_0,t}^{1-\alpha} P^\prime_k(t)-J_{t_0,t}^{1-\alpha} f^\prime(t)\right|+(t-t_0)^{-\alpha}\left|P_k(t_0)-f(t_0)\right|/\Gamma(1-\alpha),
\end{align*}
for every $t\in(t_0,t_1]$. Hence, by applying Proposition \ref{samkokilbas} (when $p=\infty$) we obtain
\begin{equation}\label{ini02imp}\left|D_{t_0,t}^\alpha P_k(t)-D_{t_0,t}^\alpha f(t)\right|\leq K_1\left[(t-t_0)^{-\alpha}\sup_{s\in[t_0,t_1]}{\big|P_k(s)-f(s)\big|}+\sup_{s\in[t_0,t_1]}{\left|P^\prime_k(s)-f^\prime(s)\right|}\right],\end{equation}
for every $t\in (t_0,t_1]$.

Following the same idea presented above, we deduce
\begin{align*}\left|D_{t_0,t}^\alpha [P_k(t)]^2-D_{t_0,t}^\alpha [f(t)]^2\right|\leq&\left|D_{t_0,t}^\alpha \left\{[P_k(t)]^2-[P_k(t_0)]^2\right\}-D_{t_0,t}^\alpha \left\{[f(t)]^2-[f(t_0)]^2\right\}\right|\\&+\left|D_{t_0,t}^\alpha [P_k(t_0)]^2-D_{t_0,t}^\alpha [f(t_0)]^2\right|\\
=&\,\,2\left|J_{t_0,t}^{1-\alpha} [P^\prime_k(t)P_k(t)]-J_{t_0,t}^{1-\alpha} [f^\prime(t)f(t)]\right|\\&+(t-t_0)^{-\alpha}\left|[P_k(t_0)]^2-[f(t_0)]^2\right|/\Gamma(1-\alpha),
\end{align*}
for every $t\in (t_0,t_1]$, and therefore
\begin{multline}\label{ini015imp}\left|D_{t_0,t}^\alpha [P_k(t)]^2-D_{t_0,t}^\alpha [f(t)]^2\right|\leq K_2\left[(t-t_0)^{-\alpha}\sup_{s\in[t_0,t_1]}{\big|[P_k(s)]^2-[f(s)]^2\big|}\right.
\\\left.+\sup_{s\in[t_0,t_1]}{\left|P^\prime_k(s)-f^\prime(s)\right|}\sup_{s\in[t_0,t_1]}{\big|P_k(s)\big|}+
\sup_{s\in[t_0,t_1]}{\big|f^\prime(s)\big|}\sup_{s\in[t_0,t_1]}{\left|P_k(s)-f(s)\right|}\right],\end{multline}
for every $t\in (t_0,t_1]$.

The proof now follows when we apply \eqref{ini01imp}, \eqref{ini02imp} and \eqref{ini015imp} in the first inequality of Theorem \ref{polinomio}. The second inequality proposed by this theorem has an analogous proof (however in this situation without the restriction $t>t_0$; see also item $(c)$ of Remark \ref{esperoqueultimo}) and therefore is omitted.
\end{proof}

\begin{remark}
\begin{itemize}
\item[(a)] Observe that for any function in $C^1([t_0,t_1];\mathbb{R})$ we can compute Riemann-Liouville and Caputo fractional derivatives of order $\alpha$; this is mainly due to Remarks \ref{caputo1} and \ref{caputo1supernovo}.
\item[(b)] In general, if $f$ belongs to $C^{1}\big([t_0,t_1]);\mathbb{R}\big)$, there is no way to ensure that $D_{t_0,t}^\alpha f$ belongs to $C^{0}\big([t_0,t_1]);\mathbb{R}\big)$. In fact, $D_{t_0,t}^\alpha f\in C^{0}\big((t_0,t_1]);\mathbb{R}\big)$; take as an example the constant function. This is why \eqref{ultultrl} does not necessarily holds at $t_0$.
\end{itemize}
\end{remark}

\subsection{An improvement of inequality (\ref{ultultrl})}

Our objective at this point is to improve the first inequality of Theorem \ref{c1functions}. To this end, we first recall a classical functional analysis result.

\begin{proposition}\label{novamolli1} Consider $\varrho\in C^\infty(\mathbb{R};\mathbb{R})$ with compact support and $\int_{-\infty}^\infty\varrho(s)ds=1$. For each $\varepsilon>0$, define function $\varrho_\varepsilon(t):=\varepsilon^{-1}\varrho(t\varepsilon^{-1})$. Assume that $X$ is a Banach space.
\begin{itemize}
\item[(a)] If $1\leq p<\infty$ and $f\in L^p\big(\mathbb{R};X)$, define $f_\varepsilon=\varrho_\varepsilon*f$. Then $f_\epsilon\in C_c^\infty(\mathbb{R};X)$ and
$$\big\|f_\varepsilon-f\big\|_{L^p(\mathbb{R};X)}\rightarrow0,$$
as $\varepsilon\rightarrow 0$.
\item[(b)] If $f\in L^\infty\big(\mathbb{R};X)$ and $f$ is continuous in an open set $U\subset\mathbb{R}$, consider as before $f_\varepsilon=\varrho_\varepsilon*f$. Hence, $f_\epsilon\in C_c^\infty(\mathbb{R};X)$ and for any compact $K\subset U$ it holds that
$$\big\|f_\varepsilon-f\big\|_{C^0(K;X)}\rightarrow0,$$
as $\varepsilon\rightarrow 0$.
\end{itemize}
\end{proposition}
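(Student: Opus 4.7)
The argument follows the familiar mollifier scheme from classical real analysis, with minor adaptations needed to accommodate the Bochner-integral setting. I would begin with item $(a)$. To see that $f_\varepsilon$ is smooth, I would differentiate under the Bochner integral sign: since $\varrho_\varepsilon$ is smooth with compact support and $f\in L^p(\mathbb{R};X)$ with $p\geq 1$, each derivative $\varrho_\varepsilon^{(k)}(t-\cdot)f(\cdot)$ is dominated in norm by an integrable function on the compact $t$-slice support, justifying the interchange of derivative and integral; smoothness in $t$ then propagates through the convolution.

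For the $L^p$ convergence the plan is the standard three-epsilon argument relying on density of $C_c(\mathbb{R};X)$ in $L^p(\mathbb{R};X)$. Given $\delta>0$, I would pick $g\in C_c(\mathbb{R};X)$ with $\|f-g\|_{L^p(\mathbb{R};X)}<\delta/3$. Using Young's inequality for convolutions (valid for Bochner integrals and giving $\|\varrho_\varepsilon*h\|_{L^p(\mathbb{R};X)}\leq\|\varrho_\varepsilon\|_{L^1(\mathbb{R};\mathbb{R})}\|h\|_{L^p(\mathbb{R};X)}=\|h\|_{L^p(\mathbb{R};X)}$), one controls $\|f_\varepsilon-g_\varepsilon\|_{L^p(\mathbb{R};X)}\leq\|f-g\|_{L^p(\mathbb{R};X)}<\delta/3$. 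The remaining piece $\|g_\varepsilon-g\|_{L^p(\mathbb{R};X)}$ is handled directly: the function $g$ is uniformly continuous on $\mathbb{R}$ (continuous with compact support), so $\sup_{t\in\mathbb{R}}\|g(t-\varepsilon s)-g(t)\|_X\to 0$ as $\varepsilon\to 0$ uniformly for $s$ in the support of $\varrho$. Writing
$$g_\varepsilon(t)-g(t)=\int_{\mathbb{R}}\varrho(s)\big[g(t-\varepsilon s)-g(t)\big]\,ds,$$
and noting that the supports of all the $g_\varepsilon$ remain inside a common compact set for sufficiently small $\varepsilon$, yields $\|g_\varepsilon-g\|_{L^p(\mathbb{R};X)}<\delta/3$, and summing the three estimates closes the argument.

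For item $(b)$, the smoothness of $f_\varepsilon$ is established exactly as above, now using $f\in L^\infty(\mathbb{R};X)$ to dominate. For the uniform convergence on a compact $K\subset U$, I would select a slightly larger compact set $K'\subset U$ with $K\subset\operatorname{int}(K')$, chosen so that for all sufficiently small $\varepsilon$ and every $t\in K$ the support of $\varrho_\varepsilon(t-\cdot)$ is contained in $K'$. Since $f$ is continuous on the compact set $K'$ it is uniformly continuous there, and the identity
$$f_\varepsilon(t)-f(t)=\int_{\mathbb{R}}\varrho(s)\big[f(t-\varepsilon s)-f(t)\big]\,ds$$
shows that $\sup_{t\in K}\|f_\varepsilon(t)-f(t)\|_X\to 0$ as $\varepsilon\to 0$.

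The main obstacle, such as it is, lies in verifying that the familiar scalar tools (Young's inequality, density of $C_c(\mathbb{R};X)$ in $L^p(\mathbb{R};X)$, differentiation under the integral sign) transfer faithfully to the Banach-valued context. None of these are genuinely deep once Bochner measurability is in hand, but each requires a careful invocation of Bochner integration theorems rather than a direct citation of the scalar result. I would therefore simply indicate, as the authors do for Proposition \ref{samkokilbas}, that the proof is an adaptation to Banach-space valued functions of the classical arguments, pointing the reader to a standard reference.
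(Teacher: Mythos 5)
Your argument is the standard mollification proof and it is correct; the paper offers no proof of this proposition at all, presenting it as a recalled classical fact, so your level of detail already exceeds the source and your concluding suggestion to cite a standard reference is exactly what the authors do. The only caveat worth recording is that the assertion $f_\varepsilon\in C_c^\infty(\mathbb{R};X)$ in the statement is only true when $f$ itself has compact support (as it does in every application the paper makes, e.g.\ the extension $F$ by zero in the proof of Theorem \ref{finalriemann}); your proof, correctly, establishes only smoothness together with the two convergence claims.
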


\begin{remark} We emphasize that $*$ is used to denote the standard convolution. More specifically, if $h:\mathbb{R}\rightarrow \mathbb{R}$ is a measurable function and $g:\mathbb{R}\rightarrow X$ is measurable in Bochner's sense, then
$$h*g(t):=\int_{-\infty}^\infty{h(t-s)g(s)}\,ds,$$
for every $t\in\mathbb{R}$, assuming that the integral in question exists.
\end{remark}

Now we address an identity related to the Riemann-Liouville fractional integral.

\begin{proposition}\label{novamolli2} If $X$ is a Banach space, $p>1/(1-\alpha)$ and $f\in L^p(t_0,t_1;X)$, it holds that
$$J_{t_0,s}^\alpha f(s)\big|_{s=t_0}=0.$$
\end{proposition}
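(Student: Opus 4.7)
The plan is a direct H\"older estimate on the pointwise norm $\|J_{t_0,s}^\alpha f(s)\|_X$, followed by showing the resulting bound vanishes as $s \to t_0^+$. Nothing beyond the Bochner triangle inequality is needed to reduce the argument to a scalar computation; in particular, no density argument or finer structure of $X$ enters.

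First I would write
\begin{equation*}
\left\|J_{t_0,s}^\alpha f(s)\right\|_X \leq \frac{1}{\Gamma(\alpha)}\int_{t_0}^s (s-\tau)^{\alpha-1}\|f(\tau)\|_X\,d\tau,
\end{equation*}
and then apply H\"older's inequality with conjugate exponent $p' = p/(p-1)$ to the factorization $(s-\tau)^{\alpha-1}\cdot \|f(\tau)\|_X$. This produces
\begin{equation*}
\left\|J_{t_0,s}^\alpha f(s)\right\|_X \leq \frac{1}{\Gamma(\alpha)}\left(\int_{t_0}^s (s-\tau)^{(\alpha-1)p'}\,d\tau\right)^{1/p'}\|f\|_{L^p(t_0,s;X)}.
\end{equation*}

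The next step is to exploit the hypothesis on $p$ to secure that $(\alpha-1)p' + 1 > 0$, so that the kernel integral converges and equals $(s-t_0)^{(\alpha-1)p'+1}/[(\alpha-1)p'+1]$. Taking the $1/p'$-th power and simplifying using $1/p' = 1-1/p$ collapses the exponent of $(s-t_0)$ to $\alpha - 1/p$, which is again strictly positive by the same hypothesis. Combining this with the uniform bound $\|f\|_{L^p(t_0,s;X)} \leq \|f\|_{L^p(t_0,t_1;X)}$ yields an estimate of the form
\begin{equation*}
\left\|J_{t_0,s}^\alpha f(s)\right\|_X \leq C(\alpha,p)\,(s-t_0)^{\alpha - 1/p}\,\|f\|_{L^p(t_0,t_1;X)}.
\end{equation*}

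Finally, letting $s \to t_0^+$ forces the left-hand side to $0$, which is precisely the meaning of $J_{t_0,s}^\alpha f(s)\big|_{s=t_0}=0$. I do not anticipate any serious obstacle; the only delicate point is matching the H\"older exponent against the hypothesis on $p$, that is, checking that the assumed lower bound on $p$ is exactly the algebraic condition which makes $(\alpha-1)p' + 1$ strictly positive and the factor $(s-t_0)^{\alpha - 1/p}$ vanishing at $s=t_0$.
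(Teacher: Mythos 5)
Your strategy is exactly the paper's: bound $\|J_{t_0,s}^\alpha f(s)\|_X$ via H\"older's inequality with exponents $p$ and $p'=p/(p-1)$ applied to the kernel and to $\|f\|_X$, and read off a positive power of $(s-t_0)$. The computation is fine, but the ``only delicate point'' you defer to the end is precisely where the argument breaks. The condition $(\alpha-1)p'+1>0$ is equivalent to $p'<1/(1-\alpha)$, which is equivalent to $p>1/\alpha$, \emph{not} to the stated hypothesis $p>1/(1-\alpha)$. For $\alpha<1/2$ these differ: take $\alpha=1/3$ and $p=2$, which satisfies $p>1/(1-\alpha)=3/2$ but gives $(\alpha-1)p'+1=-1/3<0$, so your kernel integral $\int_{t_0}^s(s-\tau)^{(\alpha-1)p'}\,d\tau$ diverges and the H\"older bound yields nothing; likewise your final exponent $\alpha-1/p$ is positive exactly when $p>1/\alpha$. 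So the verification you assert at the end does not hold as stated.

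To be fair, the mismatch is inherited from the paper itself: in its own proof the kernel is written as $(t-s)^{-\alpha}$ rather than the $(t-s)^{\alpha-1}$ of Definition \ref{riemannint}, i.e.\ the computation is really carried out for $J^{1-\alpha}_{t_0,t}$, for which $p>1/(1-\alpha)$ \emph{is} the correct threshold; and indeed the proposition is only ever invoked in the form $J^{1-\alpha}_{0,s}f(s)\big|_{s=0}=0$ in the proof of Theorem \ref{finalriemann}. Either the order in the statement should be $1-\alpha$, or the hypothesis should read $p>1/\alpha$. If you restate the result for $J^\beta$ with $f\in L^p$ and $p>1/\beta$, your argument closes verbatim and gives $\|J^\beta_{t_0,s}f(s)\|_X\le C(\beta,p)\,(s-t_0)^{\beta-1/p}\,\|f\|_{L^p(t_0,t_1;X)}$, which also yields the continuity of $J^\beta f$ noted in the remark following the proposition. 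As written against the literal statement, however, your proof (like the paper's) has a genuine gap in the exponent bookkeeping.
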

\begin{proof} Just observe that Holder's inequality ensures
\begin{multline*}\big\|J_{t_0,t}^\alpha f(t)\big\|_X=\left\|\dfrac{1}{\Gamma(\alpha)}\int_{t_0}^t{(t-s)^{-\alpha}f(s)}\,ds\right\|_X
\\\leq\dfrac{1}{\Gamma(\alpha)}\left(\int_{t_0}^t{(t-s)^{-p^*\alpha}}\,ds\right)^{1/p^*}\left(\int_{t_0}^t{\|f(s)\|_X^p}\,ds\right)^{1/p},\end{multline*}
where $p^*=p/(p-1)$. Since $p>1/(1-\alpha)$, then $1-p^*\alpha>0$ and therefore we deduce
$$\big\|J_{t_0,t}^\alpha f(t)\big\|_X\leq\left(\dfrac{(t-t_0)^{(1-\alpha p^*)/p^*}}{\big[(1-\alpha p^*)\big]^{1/p^*}\Gamma(\alpha)}\right)\|f\|_{L^p(t_0,t_1;X)},$$
which ensures the desired result.
\end{proof}

\begin{remark} Observe that the above proof also ensure that $J_{t_0,t}^\alpha f(t)$ is continuous in $[t_0,t_1]$.
\end{remark}

Bearing in mind the results discussed so far, we finally present the main theorem of this subsection.

\begin{theorem}\label{finalriemann} Let $p>1/(1-\alpha)$ and $p\geq2$. If $f\in L^p(t_0,t_1;\mathbb{R})$, $g_{1-\alpha}*f\in W^{1,2}(t_0,t_1;\mathbb{R})$ and $g_{1-\alpha}*f^2\in W^{1,1}(t_0,t_1;\mathbb{R})$, we obtain
\begin{equation*}
D_{t_0,t}^\alpha\big[f(t)\big]^2\leq2\Big[D_{t_0,t}^\alpha f(t)\Big]f(t),\quad \textrm{for almost every }t\in [t_0,t_1].
\end{equation*}
\end{theorem}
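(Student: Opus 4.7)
The approach is an approximation argument building on Theorem \ref{c1functions}. Extend $f$ by zero outside $[t_0,t_1]$ and set $f_\varepsilon:=\varrho_\varepsilon*f$, where $\varrho_\varepsilon$ is a \emph{right-sided} mollifier (supported in $[0,\varepsilon]$). Then $f_\varepsilon\in C^\infty(\mathbb{R};\mathbb{R})$, so in particular $f_\varepsilon\in C^1([t_0,t_1];\mathbb{R})$, and Theorem \ref{c1functions} immediately gives
\begin{equation*}
D_{t_0,t}^\alpha\bigl[f_\varepsilon(t)\bigr]^2\leq 2\bigl[D_{t_0,t}^\alpha f_\varepsilon(t)\bigr]f_\varepsilon(t),\qquad\textrm{for every }t\in (t_0,t_1].
\end{equation*}
The proof then consists of passing to the limit $\varepsilon\to 0^+$ in this pointwise inequality, which I would do by testing against non-negative $\eta\in C_c^\infty((t_0,t_1);\mathbb{R})$ and invoking the fundamental lemma of the calculus of variations at the end.

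The easier convergences are: $f_\varepsilon\to f$ in $L^p(t_0,t_1)$ by Proposition \ref{novamolli1}$(a)$, hence in $L^2$ (since $p\geq 2$), and therefore $f_\varepsilon^2\to f^2$ in $L^1(t_0,t_1)$ by Cauchy--Schwarz. Coupling this with Proposition \ref{samkokilbas} gives $J_{t_0,t}^{1-\alpha}f_\varepsilon^2\to J_{t_0,t}^{1-\alpha}f^2$ in $L^1(t_0,t_1)$. For any $\eta\in C_c^\infty((t_0,t_1);\mathbb{R})$, a double integration by parts (the second one justified by the hypothesis $g_{1-\alpha}*f^2\in W^{1,1}$) then transfers the limit to the Riemann--Liouville derivative of $f^2$:
\begin{equation*}
\int_{t_0}^{t_1}\eta(t)\, D_{t_0,t}^\alpha f_\varepsilon^2(t)\, dt
=-\int_{t_0}^{t_1}\eta'(t)\, J_{t_0,t}^{1-\alpha}f_\varepsilon^2(t)\, dt
\longrightarrow \int_{t_0}^{t_1}\eta(t)\, D_{t_0,t}^\alpha f^2(t)\, dt.
\end{equation*}

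The central difficulty is to prove the strong convergence $D_{t_0,t}^\alpha f_\varepsilon\to D_{t_0,t}^\alpha f$ in $L^2(t_0,t_1)$, since this (combined with $f_\varepsilon\to f$ in $L^2$) will give $\int\eta\, D^\alpha f_\varepsilon\, f_\varepsilon\to\int\eta\, D^\alpha f\cdot f$ and close the argument. The right-sided choice of $\varrho_\varepsilon$ is crucial: a Fubini-type calculation shows that on $[t_0,t_1]$ one has $J_{t_0,t}^{1-\alpha}f_\varepsilon(t)=(\varrho_\varepsilon*G)(t)$, where $G$ denotes $J_{t_0,\cdot}^{1-\alpha}f$ extended by zero on $(-\infty,t_0)$. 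The integrability hypothesis $p>1/(1-\alpha)$ now enters via Proposition \ref{novamolli2}: it forces $G(t_0^+)=0$, so the zero-extension of $G$ is continuous across $t_0$ and actually lies in $W^{1,2}_{\mathrm{loc}}((-\infty,t_1);\mathbb{R})$. Standard mollifier theory then yields $\varrho_\varepsilon*G\to G$ in $W^{1,2}(t_0,t_1)$, and differentiation gives the desired strong $L^2$-convergence.

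Combining the pieces, one obtains $\int_{t_0}^{t_1}\eta(t)\bigl[2 D_{t_0,t}^\alpha f(t)\, f(t)-D_{t_0,t}^\alpha f^2(t)\bigr]dt\geq 0$ for every non-negative $\eta\in C_c^\infty((t_0,t_1);\mathbb{R})$; since the bracketed expression belongs to $L^1(t_0,t_1)$, a standard density argument yields that it is non-negative almost everywhere on $[t_0,t_1]$, as required.
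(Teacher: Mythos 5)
Your proposal is correct and is essentially the paper's own argument: the authors also mollify the zero-extension of $f$ with a mollifier supported on the positive side, use Proposition \ref{novamolli2} (hence the hypothesis $p>1/(1-\alpha)$) to kill the boundary term at $t_0$ and obtain $D_{t_0,t}^\alpha f_\varepsilon=\varrho_\varepsilon*\bigl(D_{t_0,t}^\alpha f\bigr)$, apply Theorem \ref{c1functions} to $f_\varepsilon$, pass to the limit in the weak (tested) form with an integration by parts justified by $g_{1-\alpha}*f^2\in W^{1,1}$, and conclude via Du Bois--Reymond. The only cosmetic difference is that you phrase the key convergence at the level of $J_{t_0,t}^{1-\alpha}f_\varepsilon$ in $W^{1,2}$ rather than of $D_{t_0,t}^\alpha f_\varepsilon$ in $L^2$, which is the same computation.
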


\begin{proof} At first assume that $t_0=0$. Consider $F\in L^p(\mathbb{R};\mathbb{R})$, with
$$F(t)=\left\{\begin{array}{ll}f(t),&\textrm{for almost every }t\in[0,t_1],\\
0,&\textrm{otherwise}.
\end{array}\right.$$
Choose $\varrho\in C^\infty(\mathbb{R};\mathbb{R})$ with compact support contained in $(0,t_1)$ satisfying 
$$\int_{-\infty}^\infty\varrho(s)ds=1.$$
In this way, Proposition \ref{novamolli1} ensures
\begin{equation}\label{ccc1}\left\{\begin{array}{l}F_\varepsilon\in C^\infty([0,t_1];\mathbb{R})\,\,\textrm{and}\vspace{0.2cm}\\
F_\varepsilon\rightarrow f,\textrm{ when }\varepsilon\rightarrow0,\,\,\textrm{in the topology of }L^p(0,t_1;\mathbb{R}).\end{array}\right.\end{equation}

Now, observe that for any $t\in[0,t_1]$
$$D_{0,t}^\alpha F_\varepsilon(t)=\dfrac{d}{dt}\left\{\dfrac{1}{\Gamma(1-\alpha)}\int_0^t{(t-s)^{-\alpha}\left[\int_0^s{\varrho_\varepsilon(s-\tau)f(\tau)}\,d\tau\right]}\,ds\right\},$$
by the commutative and associative properties of convolutions and Leibniz integral rule, we achieve
$$D_{0,t}^\alpha F_\varepsilon(t)=\dfrac{d}{dt}\left[\int_0^t{\varrho_\varepsilon(t-s) J_{0,s}^{1-\alpha}f(s)}\,ds\right]=\int_0^t{\left[\frac{d}{dt}\varrho_\varepsilon(t-s)\right]J_{0,s}^{1-\alpha}f(s)}\,ds.$$

Since Proposition \ref{novamolli2} ensures that $J_{0,s}^{1-\alpha}f(s)|_{s=0}=0$, we finally obtain
\begin{multline}\label{star}D_{0,t}^\alpha F_\varepsilon(t)=-\int_0^t{\frac{d}{ds}\left[\varrho_\varepsilon(t-s)\right]J_{0,s}^{1-\alpha}f(s)}\,ds=
\int_0^t{\varrho_\varepsilon(t-s)\frac{d}{ds}\left[J_{0,s}^{1-\alpha}f(s)\right]}\,ds\\=
\int_0^t{\varrho_\varepsilon(t-s)D_{0,s}^\alpha f(s)}\,ds=\varrho_\varepsilon*G(t)=G_\varepsilon(t),\end{multline}
where $G\in L^2(\mathbb{R};\mathbb{R})$ is given by
$$G(t)=\left\{\begin{array}{ll}D_{0,t}^\alpha f(t),&\textrm{for almost every }t\in(0,t_1],\\
0,&\textrm{otherwise}.
\end{array}\right.$$

Hence, Proposition \ref{novamolli1} and identity \eqref{star} ensure that
\begin{equation}\label{ccc2}\left\{\begin{array}{l}D_{0,t}^\alpha F_\varepsilon=G_\varepsilon\in C^\infty\big([0,t_1];\mathbb{R}\big)\,\,\textrm{and}\vspace{0.2cm}\\
D_{0,t}^\alpha F_\varepsilon\rightarrow D_{0,t}^\alpha f,\textrm{ when }\varepsilon\rightarrow0,\,\,\textrm{in the topology of }L^2(0,t_1;\mathbb{R}).\end{array}\right.\end{equation}

In order to give continuity to this proof, just recall that Theorem \ref{c1functions} guarantees
\begin{equation*}
D_{0,t}^\alpha\big[F_\varepsilon(t)\big]^2\leq2\Big[D_{0,t}^\alpha F_\varepsilon(t)\Big]F_\varepsilon(t),\quad \textrm{for every }t\in (0,t_1].
\end{equation*}
Hence,  if $\phi\in C^\infty\big([0,t_1];\mathbb{R}\big)$ has compact support contained in $(0,t_1)$ and $\phi(t)\geq0$, for every $t\in(0,t_1)$, we deduce the inequality
$$0\leq\int_{0}^{t_1}{\bigg\{2\Big[D_{0,s}^\alpha F_\varepsilon(s)\Big]F_\varepsilon(s)-D_{0,s}^\alpha\big[F_\varepsilon(s)\big]^2\bigg\}}\phi(s)\,ds,$$
which is equivalent to
\begin{equation}\label{cccmainlimite}0\leq\int_{0}^{t_1}{2\Big[D_{0,s}^\alpha F_\varepsilon(s)\Big]F_\varepsilon(s)}\phi(s)\,ds+\int_{0}^{t_1}\bigg\{{J_{0,s}^{1-\alpha}\big[F_\varepsilon(s)\big]^2}\bigg\}\phi^\prime(s)\,ds.\end{equation}

By applying \eqref{ccc1}, \eqref{ccc2} and Proposition \ref{samkokilbas} in \eqref{cccmainlimite}, we obtain
\begin{equation*}0\leq\int_{0}^{t_1}{2\Big[D_{0,s}^\alpha f(s)\Big]f(s)}\phi(s)\,ds+\int_{0}^{t_1}\bigg\{{J_{0,s}^{1-\alpha}\big[f(s)\big]^2}\bigg\}\phi^\prime(s)\,ds.\end{equation*}

Since $g_{1-\alpha}*f^2\in W^{1,1}(0,t_1;\mathbb{R})$, we achieve
\begin{equation*}0\leq\int_{0}^{t_1}{\bigg\{2\Big[D_{0,s}^\alpha f(s)\Big]f(s)-D_{0,s}^{\alpha}\big[f(s)\big]^2\bigg\}}\phi(s)\,ds.\end{equation*}
%

By an argument coming from Du Bois-Reymond's lemma we complete the proof of the case $t_0=0$.

When $t_0\not=0$, just observe that for $t\in[t_0,t_1]$
\begin{multline*}D_{t_0,t}^\alpha\big[f(t)\big]^2=\dfrac{d}{dt}\left\{\dfrac{1}{\Gamma(1-\alpha)}\int_{t_0}^{t}{(t-s)^{1-\alpha}[f(s)]^2}\,ds\right\}\\
=\dfrac{d}{dt}\left\{\dfrac{1}{\Gamma(1-\alpha)}\int_{0}^{t-t_0}{(t-t_0-s)^{1-\alpha}[f(s+t_0)]^2}\,ds\right\}=D_{0,s}^\alpha\big[H(s)\big]^2\big|_{s=t-t_0}\,\,,\end{multline*}
with $H(t)=f(t+t_0)$. But then the first part of this proof ensures that
$$D_{t_0,t}^\alpha\big[f(t)\big]^2=D_{0,s}^\alpha\big[H(s)\big]^2\big|_{s=t-t_0}\leq2 \left[D_{0,s}^\alpha H(s)|_{s=t-t_0}\right]H(t-t_0)=2\left[D_{t_0,t}^\alpha f(t)\right]f(t),$$
as we wanted. This completes the proof.
\end{proof}

\begin{remark} \begin{itemize}
\item[(a)] The regularity assumptions in Theorem \ref{finalriemann} are not artificial. In fact, they are a mix between:
\begin{itemize}
\item[(i)] The weaker necessary conditions to ensure the existence of $D_{t_0,t}[f(t)]^2$ and $D_{t_0,t}^\alpha f(t)$; $f\in L^p(t_0,t_1;\mathbb{R})$, for some value $p\geq2$, $g_{1-\alpha}*f\in W^{1,1}(t_0,t_1;\mathbb{R})$ and $g_{1-\alpha}*f^2\in W^{1,1}(t_0,t_1;\mathbb{R})$;
\item[(ii)] Function $f$ needs to satisfy the hypotheses of Theorem \ref{novamolli2}; more specifically, we assume that $f\in L^p(t_0,t_1;\mathbb{R})$ with $p>1/(1-\alpha)$;
\item[(iii)] The regularity that is necessary to apply Holder's inequality in \eqref{cccmainlimite} and apply the limits; more precisely $g_{1-\alpha}*f\in W^{1,2}(t_0,t_1;\mathbb{R})$.
\end{itemize}
\item[(b)] We conjecture that the restriction over $p$ and $\alpha$, which were presented in Theorem \ref{finalriemann} and discussed in item $(ii)$ above, cannot be removed; recall that Hardy-Littlewood have already struggled with this kind of restriction (see \cite[Theorem 4]{HaLi1} for details).
\end{itemize}
\end{remark}

\subsection{An improvement of inequality (\ref{ultultcpt})}

Now we address the Caputo fractional derivative. For this scenario we need a slightly distinct approach, since here the functions are at least continuous.

We begin by proving an auxiliary result, which can be considered a more regular version of Theorem \ref{finalriemann}.

\begin{lemma}\label{finalriemann2} Let $f\in C^0([t_0,t_1];\mathbb{R})$ which also satisfies $g_{1-\alpha}*f\in W^{1,1}(t_0,t_1;\mathbb{R})$ and $g_{1-\alpha}*f^2\in W^{1,1}(t_0,t_1;\mathbb{R})$. Then,
\begin{equation*}
D_{t_0,t}^\alpha\big[f(t)\big]^2\leq2\Big[D_{t_0,t}^\alpha f(t)\Big]f(t),\quad \textrm{for almost every }t\in [t_0,t_1].
\end{equation*}
\end{lemma}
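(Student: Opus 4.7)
The plan is to adapt the mollification argument of Theorem~\ref{finalriemann} to the present setting, exploiting the continuity of $f$ (which gives $f\in L^\infty$, hence $f\in L^p$ for every $p\geq1$) to compensate for the weakened integrability assumption $g_{1-\alpha}*f\in W^{1,1}$ instead of $W^{1,2}$. As in the final paragraph of that proof, I would first reduce to the case $t_0=0$ by the translation $H(s):=f(s+t_0)$, so that the problem becomes proving the inequality on $[0,t_1]$.

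Next, I would extend $f$ by zero outside $[0,t_1]$ to obtain $F\in L^\infty(\mathbb{R};\mathbb{R})$, and mollify: pick $\varrho\in C^\infty_c(\mathbb{R};\mathbb{R})$ with $\operatorname{supp}\varrho\subset(0,t_1)$ and $\int\varrho=1$, and set $F_\varepsilon:=\varrho_\varepsilon*F\in C_c^\infty(\mathbb{R};\mathbb{R})$. Two properties are crucial: (i) $\|F_\varepsilon\|_{L^\infty}\leq\|f\|_{L^\infty}$ uniformly in $\varepsilon$, and (ii) by Proposition~\ref{novamolli1}(b) applied on the open set $U=(0,t_1)$ where $f$ is continuous, $F_\varepsilon\to f$ uniformly on every compact $K\subset(0,t_1)$. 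Since $f\in L^\infty\subset L^p$ for any $p>1/\alpha$, Proposition~\ref{novamolli2} (applied to exponent $1-\alpha$) yields $J_{0,s}^{1-\alpha}f(s)\big|_{s=0}=0$. Then the very same derivation that produced identity \eqref{star} gives $D_{0,t}^\alpha F_\varepsilon(t)=\varrho_\varepsilon*G(t)=G_\varepsilon(t)$, where $G$ denotes $D_{0,\cdot}^\alpha f$ on $(0,t_1]$ extended by zero. Because $g_{1-\alpha}*f\in W^{1,1}(0,t_1;\mathbb{R})$, we have $G\in L^1(\mathbb{R};\mathbb{R})$, and Proposition~\ref{novamolli1}(a) gives $G_\varepsilon\to G$ in $L^1(\mathbb{R};\mathbb{R})$.

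Applying Theorem~\ref{c1functions} to $F_\varepsilon\in C^\infty([0,t_1];\mathbb{R})$, testing against an arbitrary nonnegative $\phi\in C_c^\infty((0,t_1);\mathbb{R})$, and integrating by parts exactly as in Theorem~\ref{finalriemann} yields
\begin{equation*}
0\leq\int_0^{t_1}2\bigl[D_{0,s}^\alpha F_\varepsilon(s)\bigr]F_\varepsilon(s)\phi(s)\,ds+\int_0^{t_1}\bigl\{J_{0,s}^{1-\alpha}[F_\varepsilon(s)]^2\bigr\}\phi'(s)\,ds.
\end{equation*}
Passing to the limit $\varepsilon\to 0$ is the technical heart of the argument. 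For the first integral, $F_\varepsilon\phi$ is uniformly bounded and, since $\operatorname{supp}\phi$ is compact in $(0,t_1)$, converges uniformly to $f\phi$; combined with $G_\varepsilon\to G$ in $L^1$, this gives convergence to $\int 2[D^\alpha_{0,s}f(s)]f(s)\phi(s)\,ds$. For the second, bounded convergence yields $F_\varepsilon^2\to f^2$ in $L^1(0,t_1;\mathbb{R})$, and then Proposition~\ref{samkokilbas} (with $p=1$) provides $J_{0,s}^{1-\alpha}[F_\varepsilon^2]\to J_{0,s}^{1-\alpha}[f^2]$ in $L^1$.

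Finally, the hypothesis $g_{1-\alpha}*f^2\in W^{1,1}(0,t_1;\mathbb{R})$ lets me integrate the remaining boundary term back, turning the limiting inequality into
\begin{equation*}
0\leq\int_0^{t_1}\Bigl\{2\bigl[D_{0,s}^\alpha f(s)\bigr]f(s)-D_{0,s}^\alpha[f(s)]^2\Bigr\}\phi(s)\,ds
\end{equation*}
for every nonnegative test function $\phi$, and Du~Bois-Reymond's lemma closes the proof. The main obstacle I anticipate is organizing the limit passage so that only $L^1$-type convergence of $G_\varepsilon$ is needed; this works precisely because the continuity of $f$ forces $F_\varepsilon$ to be uniformly bounded, and uniform boundedness together with uniform convergence of $F_\varepsilon\phi$ on $\operatorname{supp}\phi$ is enough to pair with an $L^1$ convergent sequence.
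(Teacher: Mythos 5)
Your proposal is correct and follows essentially the same route as the paper's own proof, which likewise adapts the mollification argument of Theorem~\ref{finalriemann} by replacing $L^p$/$L^2$ convergence with uniform convergence of $F_\varepsilon$ on compact subsets of $(0,t_1)$ and $L^1$ convergence of $D_{0,t}^\alpha F_\varepsilon$, before repeating the test-function and Du~Bois-Reymond step. In fact you supply more detail than the paper does on why the pairing of an $L^1$-convergent sequence with uniformly bounded, locally uniformly convergent factors suffices in the limit passage.
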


\begin{proof} Assume that $t_0=0$. With the ideas and notation introduced in the proof of Theorem \ref{finalriemann}, however here with new regularities, we deduce that
\begin{equation}\label{cccc1}\left\{\begin{array}{l}F_\varepsilon\in C^\infty([0,t_1];\mathbb{R})\,\,\textrm{and}\vspace{0.2cm}\\
F_\varepsilon\rightarrow f,\textrm{ when }\varepsilon\rightarrow0,\,\,\textrm{in the topology of }C^0([\tau_0,\tau_1];\mathbb{R}).\end{array}\right.\end{equation}
for any $[\tau_0,\tau_1]\subset(0,t_1)$ and
\begin{equation}\label{cccc2}\left\{\begin{array}{l}D_{0,t}^\alpha F_\varepsilon\in C^\infty\big([0,t_1];\mathbb{R}\big)\,\,\textrm{and}\vspace{0.2cm}\\
D_{0,t}^\alpha F_\varepsilon\rightarrow D_{0,t}^\alpha f,\textrm{ when }\varepsilon\rightarrow0,\,\,\textrm{in the topology of }L^1(0,t_1;\mathbb{R}).\end{array}\right.\end{equation}

Statements \eqref{cccc1} and \eqref{cccc2} are enough for us to repeat the last part of the proof presented in Theorem \ref{finalriemann} in any interval $[\tau_0,\tau_1]\subset(0,t_1)$, what allows us to conclude the desired result. For the case $t_0\not=0$ just replicate the argument used in the last part of the proof of Theorem \ref{finalriemann}.
\end{proof}

We end this subsection with the theorem that improves inequality \eqref{ultultcpt}.

\begin{theorem}\label{finalcaputo} Assume the same hypothesis of Lemma \ref{finalriemann2}. Then,
\begin{equation*}
cD_{t_0,t}^\alpha\big[f(t)\big]^2\leq2\Big[cD_{t_0,t}^\alpha f(t)\Big]f(t),\quad \textrm{for almost every }t\in [t_0,t_1].
\end{equation*}
\end{theorem}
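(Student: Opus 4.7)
My plan is to reduce to the Riemann-Liouville case already established in Lemma \ref{finalriemann2} by translating $f$ to make it vanish at $t_0$, exploiting the fact that the Caputo derivative of a constant is zero.

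Define $g(t) := f(t) - f(t_0)$, so that $g \in C^0([t_0,t_1];\mathbb{R})$ with $g(t_0) = 0$. I would first verify that $g$ inherits the hypotheses of Lemma \ref{finalriemann2}: since $g_{1-\alpha}*g = g_{1-\alpha}*f - f(t_0)\,g_{1-\alpha}*\mathbf{1}$ and $g_{1-\alpha}*\mathbf{1}$ is a smooth multiple of $(t-t_0)^{1-\alpha}$, the membership $g_{1-\alpha}*g \in W^{1,1}(t_0,t_1;\mathbb{R})$ follows from the assumption on $f$. The same linearity trick, together with the expansion $g^2 = f^2 - 2f(t_0)f + [f(t_0)]^2$, shows that $g_{1-\alpha}*g^2 \in W^{1,1}(t_0,t_1;\mathbb{R})$.

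Next I would use Definition \ref{caputo} to write $cD_{t_0,t}^\alpha f(t) = D_{t_0,t}^\alpha[f(t)-f(t_0)] = D_{t_0,t}^\alpha g(t)$, and similarly, expanding
\[
[f(t)]^2 = [g(t)]^2 + 2f(t_0)\,g(t) + [f(t_0)]^2,
\]
I would apply $cD_{t_0,t}^\alpha$ (using item (vi) of Proposition \ref{properties}, together with \eqref{continha2} which kills the constant term and the analogous identity for $g$) to obtain
\[
cD_{t_0,t}^\alpha [f(t)]^2 = cD_{t_0,t}^\alpha [g(t)]^2 + 2f(t_0)\,cD_{t_0,t}^\alpha f(t).
\]
Since $g(t_0)=0$, Remark \ref{caputo1supernovo} gives $cD_{t_0,t}^\alpha [g(t)]^2 = D_{t_0,t}^\alpha[g(t)]^2$ for a.e.\ $t$ (the boundary term $[g(t_0)]^2=0$ disappears).

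Now I would invoke Lemma \ref{finalriemann2} applied to $g$, which yields
\[
D_{t_0,t}^\alpha [g(t)]^2 \leq 2\bigl[D_{t_0,t}^\alpha g(t)\bigr] g(t) = 2\bigl[cD_{t_0,t}^\alpha f(t)\bigr]\bigl(f(t)-f(t_0)\bigr),
\]
for almost every $t \in [t_0,t_1]$. Combining this with the identity above gives
\[
cD_{t_0,t}^\alpha[f(t)]^2 \le 2\bigl[cD_{t_0,t}^\alpha f(t)\bigr]\bigl(f(t)-f(t_0)\bigr) + 2f(t_0)\,cD_{t_0,t}^\alpha f(t) = 2\bigl[cD_{t_0,t}^\alpha f(t)\bigr] f(t),
\]
which is exactly the desired inequality. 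I do not anticipate a serious obstacle here; the only point requiring care is the bookkeeping that shows $g$ satisfies all three regularity hypotheses of Lemma \ref{finalriemann2}, and this reduces to linearity of the convolution together with the explicit regularity of $g_{1-\alpha}*\mathbf{1}$.
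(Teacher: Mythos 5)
Your proposal is correct and follows essentially the same route as the paper: both proofs set $g(t):=f(t)-f(t_0)$, check that $g$ inherits the hypotheses of Lemma \ref{finalriemann2}, apply that lemma to $g$, and then translate back to the Caputo derivative of $f$ via the linearity of the fractional operators and the vanishing of the constant terms. The only difference is cosmetic bookkeeping (you cancel $f(t_0)$ terms on the Caputo side, the paper on the Riemann--Liouville side), so there is nothing further to add.
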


\begin{proof}Define function $g(t):=f(t)-f(t_0)$. Since function $g\in C^0([t_0,t_1];\mathbb{R})$ and also $g_{1-\alpha}*g\in W^{1,1}(t_0,t_1;\mathbb{R})$ and $g_{1-\alpha}*g^2\in W^{1,1}(t_0,t_1;\mathbb{R})$, by Lemma \ref{finalriemann2} it holds that
\begin{equation*}
D_{t_0,t}^\alpha\big[g(t)\big]^2\leq2\Big[D_{t_0,t}^\alpha g(t)\Big]g(t),\quad \textrm{for almost every }t\in [t_0,t_1].
\end{equation*}

On the other hand, observe that Proposition \ref{properties} ensures the identities
\begin{align*}
D_{t_0,t}^\alpha\big[g(t)\big]^2&=D_{t_0,t}^\alpha\big[f(t)\big]^2
-2D_{t_0,t}^\alpha\big[f(t)\big]f(t_0)+D_{t_0,t}^\alpha\big[f(t_0)\big]^2
\\&=D_{t_0,t}^\alpha\left\{\big[f(t)\big]^2-\big[f(t_0)\big]^2\right\}
-2D_{t_0,t}^\alpha\big[f(t)\big]f(t_0)+2D_{t_0,t}^\alpha\big[f(t_0)\big]^2
\\&=cD_{t_0,t}^\alpha\big[f(t)\big]^2
-2D_{t_0,t}^\alpha\big[f(t)\big]f(t_0)+2D_{t_0,t}^\alpha\big[f(t_0)\big]^2,
\end{align*}
and
\begin{align*}
2\Big[D_{t_0,t}^\alpha g(t)\Big]g(t)&=2D_{t_0,t}^\alpha\Big[f(t)-f(t_0)\Big]f(t)
-2D_{t_0,t}^\alpha\Big[f(t)-f(t_0)\Big]f(t_0)\\
&=2\Big[cD_{t_0,t}^\alpha f(t)\Big]f(t)-2D_{t_0,t}^\alpha\big[f(t)\big]f(t_0)+2D_{t_0,t}^\alpha\big[f(t_0)\big]^2.
\end{align*}

Thus the proof of the theorem follows.
\end{proof}

\subsection{Theorem \ref{polinomio} for vectorial functions}

The classical generalization of \eqref{desifrac} to functions with values in a Hilbert space can be summarized by the following theorem (this result is discussed in several literatures; we may cite \cite{LiMa1,Te1} as examples).
\begin{theorem}\label{originalteman} Suppose that $H$ and $V$ are Hilbert spaces satisfying:
\begin{itemize}
\item[(a)] $V$ is dense in $H$ and also is continuously included in $H$.
\item[(b)] If $H^\prime$ represents the dual of $H$, by the Riesz
representation theorem, we consider the identification  $H\equiv H^\prime$.
\end{itemize}

With the conditions introduced by items $(a)$ and $(b)$, if $V^\prime$ represents the dual of $V$, we arrive at the continuous inclusions
$$V\subset H\equiv H^\prime\subset V^\prime.$$

In this case, if
$$u\in L^2(0,T;V)\qquad\textrm{and}\qquad u^\prime\in L^2(0,T;V^\prime),$$
then $u$ is almost everywhere equal to a continuous function from $[0,T]$ into $H$ and
$$\dfrac{d}{dt}\big\|u(t)\big\|_H^2=2\left\langle\dfrac{d}{dt}u(t),u(t)\right\rangle_{V^\prime,V},\quad \textrm{for almost every }t\in [0,T].$$
Above the symbol $\langle\cdot,\cdot\rangle_{V^\prime,V}$ denotes the duality pairing.
\end{theorem}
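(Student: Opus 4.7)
The plan is to follow the classical density argument of Lions--Magenes. For smooth $V$-valued functions the identity is immediate from the ordinary chain rule applied to $t\mapsto (u(t),u(t))_H$, since the inclusion $V\subset H\equiv H'\subset V'$ reduces $(u'(t),u(t))_H$ to the duality pairing $\langle u'(t),u(t)\rangle_{V',V}$. Everything then comes down to producing smooth approximations in the appropriate norms and passing to the limit.

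First I would regularize. Extend $u\in L^2(0,T;V)$ with $u'\in L^2(0,T;V')$ to a function $\tilde u$ defined on a slightly larger interval by reflection across the endpoints $0$ and $T$, preserving the property that $\tilde u\in L^2(I;V)$ and $\tilde u'\in L^2(I;V')$. Then set $u_\varepsilon:=\rho_\varepsilon*\tilde u$ for a standard mollifier $\rho_\varepsilon$, in analogy with Proposition~\ref{novamolli1}. This produces $u_\varepsilon\in C^\infty([0,T];V)$ with $u_\varepsilon\to u$ in $L^2(0,T;V)$ and $u_\varepsilon'\to u'$ in $L^2(0,T;V')$, since differentiation and convolution commute.

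Second, since $u_\varepsilon$ is smooth with values in $V\hookrightarrow H$, the map $t\mapsto\|u_\varepsilon(t)\|_H^2$ is $C^1$ and
$$\|u_\varepsilon(t)\|_H^2-\|u_\varepsilon(s)\|_H^2=2\int_s^t\bigl(u_\varepsilon'(\tau),u_\varepsilon(\tau)\bigr)_H\,d\tau=2\int_s^t\langle u_\varepsilon'(\tau),u_\varepsilon(\tau)\rangle_{V',V}\,d\tau,$$
for every $s,t\in[0,T]$. Applying this to the difference $u_\varepsilon-u_\delta$ and using the duality Cauchy--Schwarz bound $|\langle v',v\rangle_{V',V}|\leq\|v'\|_{V'}\|v\|_{V}$, together with a starting-time argument (choose any Lebesgue point at which $u_\varepsilon(t_0)$ is Cauchy in $H$), one obtains
$$\sup_{t\in[0,T]}\|u_\varepsilon(t)-u_\delta(t)\|_H^2\longrightarrow 0\quad\text{as }\varepsilon,\delta\to 0.$$
Hence $\{u_\varepsilon\}$ is Cauchy in $C([0,T];H)$, its uniform limit is a continuous $H$-valued representative of $u$, and passing to the limit in the integral identity yields
$$\|u(t)\|_H^2-\|u(s)\|_H^2=2\int_s^t\langle u'(\tau),u(\tau)\rangle_{V',V}\,d\tau.$$
The stated differential formula follows because the right-hand side is absolutely continuous in $t$ with the prescribed derivative for a.e. $t$.

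The main obstacle I expect is the extension/mollification step: one must extend $u$ outside $[0,T]$ in a way that simultaneously preserves the $L^2(I;V)$ regularity of $u$ and the $L^2(I;V')$ regularity of $u'$, so that convolution with $\rho_\varepsilon$ yields approximations converging in \emph{both} norms. In the Banach-valued setting this requires Bochner-measurability care and a boundary extension (reflection combined with cutoff) compatible with distributional differentiation. Once this is in place, every subsequent step is a routine continuity argument.
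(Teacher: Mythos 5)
Your proof is correct and is essentially the classical argument of Lions--Magenes and Temam; the paper itself offers no proof of Theorem \ref{originalteman}, deferring entirely to the citations \cite{LiMa1,Te1}, and your mollification-plus-Cauchy-in-$C([0,T];H)$ scheme is precisely the proof found there (and is also the template the authors later adapt for the fractional analogue, Theorem \ref{finalvectorial}). The only point deserving a remark is the choice of the base time $s$ in the Cauchy estimate: rather than hunting for a Lebesgue point along a subsequence, it is cleaner to average the identity over $s\in[0,T]$, which bounds $\sup_t\|u_\varepsilon(t)-u_\delta(t)\|_H^2$ by $\frac{1}{T}\|u_\varepsilon-u_\delta\|_{L^2(0,T;H)}^2+2\|u_\varepsilon'-u_\delta'\|_{L^2(0,T;V')}\|u_\varepsilon-u_\delta\|_{L^2(0,T;V)}$ and avoids any subsequence extraction; your version still works, but this is the standard shortcut.
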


Hence, in order to generalize Theorems \ref{finalriemann} and \ref{finalcaputo} to functions with values in a Hilbert space, we add several new ideas to the classical proof of Theorem \ref{originalteman}, so that we can overcome the barriers imposed by the non-local definition of Riemann-Liouville and Caputo fractional derivatives.

\begin{theorem}\label{firsthilber} Consider $W$ a Hilbert space and define the set
\begin{multline*}C^1([t_0,t_1];\mathbb{R})\otimes W\\=\bigcup_{n=1}^\infty\left\{\sum_{k=1}^n{\phi_k(t)w_k}:\phi_k\in C^1([t_0,t_1];\mathbb{R})\textrm{ and }w_k\in W,\textrm{ for each }k\in\{1,2,\ldots,n\}\right\}.\end{multline*}

For every function $u\in C^1([t_0,t_1];\mathbb{R})\otimes W$, it holds that:
$$D_{t_0,t}^\alpha \big\|u(t)\big\|_W^2\leq2\Big(D_{t_0,t}^\alpha u(t),u(t)\Big)_W\,\,,\quad \textrm{for every }t\in (t_0,t_1],$$
and
$$cD_{t_0,t}^\alpha \big\|u(t)\big\|_W^2\leq2\Big(cD_{t_0,t}^\alpha u(t),u(t)\Big)_W,\quad \textrm{for every }t\in [t_0,t_1].$$
\end{theorem}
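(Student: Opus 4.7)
The plan is to reduce everything to the scalar case already handled by Theorem~\ref{c1functions}. Given $u(t)=\sum_{k=1}^n \phi_k(t)w_k \in C^1([t_0,t_1];\mathbb{R})\otimes W$, the vectors $w_1,\dots,w_n$ span a finite-dimensional subspace $V\subset W$ of some dimension $m\leq n$. First I would apply Gram--Schmidt to obtain an orthonormal basis $\{e_1,\dots,e_m\}$ of $V$ and write $w_k=\sum_{j=1}^m c_{kj}e_j$. Substituting back yields
$$u(t)=\sum_{j=1}^m \psi_j(t)\,e_j,\qquad \psi_j(t):=\sum_{k=1}^n c_{kj}\phi_k(t)\in C^1([t_0,t_1];\mathbb{R}).$$
Because $\{e_j\}$ is orthonormal, one immediately has the scalar identity
$$\|u(t)\|_W^2=\sum_{j=1}^m \bigl[\psi_j(t)\bigr]^2,\qquad t\in[t_0,t_1].$$

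Next I would apply Theorem~\ref{c1functions} componentwise. Since each $\psi_j\in C^1([t_0,t_1];\mathbb{R})$, the theorem gives
$$D_{t_0,t}^\alpha\bigl[\psi_j(t)\bigr]^2\leq 2\bigl[D_{t_0,t}^\alpha \psi_j(t)\bigr]\psi_j(t), \qquad t\in(t_0,t_1],$$
and analogously for the Caputo derivative on $[t_0,t_1]$. Using the linearity of the Riemann--Liouville (resp.\ Caputo) fractional derivative established in item~(iii) (resp.\ item~(vi)) of Proposition~\ref{properties}, I can sum these $m$ inequalities and commute the derivative past the finite sum to obtain
$$D_{t_0,t}^\alpha \|u(t)\|_W^2=\sum_{j=1}^m D_{t_0,t}^\alpha\bigl[\psi_j(t)\bigr]^2\leq 2\sum_{j=1}^m \bigl[D_{t_0,t}^\alpha\psi_j(t)\bigr]\psi_j(t).$$

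Finally, by the same linearity and the orthonormality of $\{e_j\}$, the fractional derivative of the vector-valued $u$ is
$$D_{t_0,t}^\alpha u(t)=\sum_{j=1}^m \bigl[D_{t_0,t}^\alpha \psi_j(t)\bigr]e_j,$$
so that
$$\bigl(D_{t_0,t}^\alpha u(t),u(t)\bigr)_W=\sum_{j=1}^m \bigl[D_{t_0,t}^\alpha \psi_j(t)\bigr]\psi_j(t),$$
which combined with the previous inequality produces the claimed Riemann--Liouville bound. The Caputo case is handled identically, with the improvement that the resulting inequality extends to $t=t_0$ because of item~(c) of Remark~\ref{esperoqueultimo}.

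There is no really hard step here: the proof is essentially bookkeeping once one notices that the orthonormalization converts a Hilbert-valued statement into $m$ independent scalar statements, to which the already-proven scalar theorem applies. The only delicate point worth checking carefully is that the linearity in Proposition~\ref{properties} is invoked on functions whose fractional derivatives genuinely exist---but this is automatic since each $\psi_j\in C^1([t_0,t_1];\mathbb{R})$ satisfies $g_{1-\alpha}*\psi_j\in W^{1,1}(t_0,t_1;\mathbb{R})$ (cf.\ Remarks~\ref{caputo1} and~\ref{caputo1supernovo}), and the sum is finite.
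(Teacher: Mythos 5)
Your proof is correct and follows essentially the same route as the paper: orthonormalize so that $\|u(t)\|_W^2$ becomes a finite sum of squares of scalar $C^1$ functions, apply Theorem~\ref{c1functions} componentwise, and recombine via linearity. The only (cosmetic) difference is that you run Gram--Schmidt on the finitely many vectors actually present, whereas the paper invokes Zorn's lemma to fix an orthonormal Hamel basis of all of $W$; your local version is if anything cleaner and avoids that set-theoretic detour.
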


\begin{proof} Recall that Zorn's lemma ensures the existence of an orthonormal Hamel basis to $W$, which we denote by $B$. Thus, we rewrite
\begin{multline*}C^1([t_0,t_1];\mathbb{R})\otimes W\\=\bigcup_{n=1}^\infty\left\{\sum_{k=1}^n{\phi_k(t)v_k}:\phi_k\in C^1([t_0,t_1];\mathbb{R})\textrm{ and }v_k\in B,\textrm{ for each }k\in\{1,2,\ldots,n\}\right\}.\end{multline*}

Therefore, since each $v\in C^1([t_0,t_1])\otimes W$ can be expressed as $v(t):=\sum_{k=1}^n{\phi_k(t)v_k}$, Proposition \ref{properties} and Theorem \ref{c1functions} guarantees that
\begin{equation*}D_{t_0,t}^\alpha \big\|u(t)\big\|_W^2=\sum_{k=1}^nD_{t_0,t}^\alpha\big[\phi_k(t)\big]^2\leq2\sum_{k=1}^n\left[D_{t_0,t}^\alpha\phi_k(t)\right]\phi_k(t)=2\Big(D_{t_0,t}^\alpha u(t), u(t)\Big)_W\,\,,\end{equation*}
for every $t\in (t_0,t_1].$  The inequality involving Caputo fractional derivative relies on similar arguments.
\end{proof}

We point out that function space $C^1\big([t_0,t_1]\big)\otimes W$ denotes a standard structure from the approximation theory which was extensively studied in several classical books from this area; see \cite{Ll1,Pro1} as standard references on this subject.

\begin{proposition}\label{densitynova} Let $W$ be a Hilbert space. Then $C^1([t_0,t_1])\otimes W$ is a dense subset of $C^1([t_0,t_1];W)$.
\end{proposition}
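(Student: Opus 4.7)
The plan is to approximate any $u \in C^1([t_0,t_1];W)$ (together with its derivative) uniformly on $[t_0,t_1]$ by functions of the form $\sum_{k=1}^n \phi_k(t) v_k$. The norm I must beat, according to the definition in Section~\ref{pre}, is $\|f\|_{C^1([t_0,t_1];W)}^2 = (\sup_t \|f(t)\|_W)^2 + (\sup_t \|f'(t)\|_W)^2$, so the approximation must handle the function and its derivative simultaneously. The key observation is that the set $K := u([t_0,t_1]) \cup u'([t_0,t_1])$ is compact in $W$, being a finite union of continuous images of a compact interval.

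Fix $\varepsilon > 0$. Using the compactness of $K$ in the Hilbert space $W$, I would cover $K$ by finitely many open balls of radius $\varepsilon$ centered at points $w_1, \ldots, w_N \in K$, and take $V_n := \operatorname{span}\{w_1, \ldots, w_N\}$. Applying Gram--Schmidt produces an orthonormal basis $\{v_1, \ldots, v_n\}$ of $V_n$, with $n \leq N$. If $P_{V_n}$ denotes the orthogonal projection onto $V_n$, then for every $w \in K$ one has
$$\|w - P_{V_n} w\|_W \leq \|w - w_i\|_W < \varepsilon,$$
where $w_i$ is the center of any ball in the cover containing $w$ (here I use the defining minimality of the orthogonal projection).

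I then set $u_\varepsilon(t) := P_{V_n}(u(t)) = \sum_{k=1}^n \phi_k(t) v_k$ with $\phi_k(t) := (u(t), v_k)_W$. Since the inner product with a fixed vector is a bounded linear functional and $u \in C^1([t_0,t_1];W)$, each $\phi_k$ lies in $C^1([t_0,t_1];\mathbb{R})$ and satisfies $\phi_k'(t) = (u'(t), v_k)_W$. Consequently $u_\varepsilon \in C^1([t_0,t_1]) \otimes W$, and by linearity and boundedness of $P_{V_n}$ its derivative satisfies $u_\varepsilon'(t) = P_{V_n}(u'(t))$. The estimates above then yield $\sup_t \|u(t) - u_\varepsilon(t)\|_W < \varepsilon$ and $\sup_t \|u'(t) - u_\varepsilon'(t)\|_W < \varepsilon$, hence $\|u - u_\varepsilon\|_{C^1([t_0,t_1];W)} \leq \varepsilon\sqrt{2}$, proving density.

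The main technical ingredient is the finite-dimensional approximation of the compact set $K$, a classical fact about compact subsets of Hilbert spaces. The crucial feature of using \emph{one} orthogonal projection is that the same orthonormal system simultaneously controls $u$ and $u'$; a more naive construction that approximated only $u$ (say, via a partition-of-unity argument valued in $W$) would not automatically give $C^1$ convergence of the derivatives, which is why unifying $u$ and $u'$ into a single compact set $K$ before projecting is the cleanest route.
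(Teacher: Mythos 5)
Your proof is correct, and it takes a genuinely different route from the paper's. The paper reduces the $C^1$ problem to a $C^0$ one: it invokes a density theorem of Prolla to approximate $u'$ uniformly by elements $\tilde{u}_k$ of $C^0([t_0,t_1])\otimes W$, and then defines $u_k(t):=u(t_0)+\int_{t_0}^t\tilde{u}_k(s)\,ds$ (interpreted coefficientwise), so that $u_k'=\tilde{u}_k$ and the fundamental theorem of calculus converts uniform control of $u'-u_k'$ into uniform control of $u-u_k$ up to a factor $t_1-t_0$. You instead exploit the Hilbert structure directly: the union $K=u([t_0,t_1])\cup u'([t_0,t_1])$ is compact, a finite $\varepsilon$-net of $K$ spans a finite-dimensional subspace $V_n$, and the single orthogonal projection $P_{V_n}$ moves every point of $K$ by less than $\varepsilon$, hence controls $u$ and $u'$ simultaneously; the identities $\phi_k'(t)=(u'(t),v_k)_W$ and $u_\varepsilon'=P_{V_n}u'$ are justified exactly as you say. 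Your argument is more self-contained (no appeal to an external approximation theorem) and produces a canonical approximant, namely the projection of $u$ itself; the paper's argument, by contrast, does not use orthogonality at all and would carry over verbatim to Banach-valued targets wherever the underlying $C^0$ density result holds, which is presumably why the authors chose it. Both correctly address the essential difficulty you identify, namely that the function and its derivative must be approximated by the \emph{same} finite linear combination.
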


\begin{proof} Let $u\in C^1([t_0,t_1];W)$. Since $u^\prime\in C^0([t_0,t_1];W)$, Theorem 1.15 of Prolla's book \cite{Pro1} ensures the existence of $\{\tilde{u}_k\}_{k=1}^\infty\subset C^0([t_0,t_1])\otimes W$, where
\begin{multline*}C^0([t_0,t_1])\otimes W\\=\bigcup_{n=1}^\infty\left\{\sum_{k=1}^n{\phi_k(t)w_k}:\phi_k\in C^0([t_0,t_1];\mathbb{R})\textrm{ and }w_k\in W,\textrm{ for each }k\in\{1,2,\ldots,n\}\right\},\end{multline*}
such that
\begin{equation}\label{ultimotestenovo1}\sup_{s\in[t_0,t_1]}{\|u^\prime(s)-\tilde{u}_k(s)\|_W}\rightarrow 0,\end{equation}
when $k\rightarrow\infty$. If we assume that sequence $\{\tilde{u}_k(t)\}_{k=1}^\infty$ is given by
$$\tilde{u}_k(t):=\sum_{l=1}^{n_k}{\phi_l^k(t)w_l},\quad \textrm{for every }t\in [t_0,t_1],$$
and define $\{u_k(t)\}_{k=1}^\infty$ by
$$u_k(t):=u(t_0)+\sum_{l=1}^{n_k}{\left[\int_{t_0}^{t}{\phi_l^k(s)}\,ds\right]w_l},\quad \textrm{for every }t\in [t_0,t_1],$$
we shall achieve that $\{u_k\}_{k=1}^\infty\subset C^1([t_0,t_1])\otimes W$ and
\begin{equation}\label{ultimotestenovo2}\|u(t)-u_k(t)\|_W=\left\|\int_{t_0}^{t}{u^\prime(s)}\,ds-\int_{t_0}^{t}{\tilde{u}_k(s)}\,ds\right\|_W\leq [t_1-t_0]\sup_{s\in[t_0,t_1]}{\|u^\prime(s)-\tilde{u}_k(s)\|_W}.\end{equation}

From equations \eqref{ultimotestenovo1} and \eqref{ultimotestenovo2} we finally deduce that $u_k\rightarrow u$ in the topology of $C^1([t_0,t_1];W)$, when $k\rightarrow\infty$.
\end{proof}

An expected consequence of the above results is the following theorem.

\begin{theorem}\label{finalcorocoro} If $W$ is a Hilbert space, then for every function $u\in C^1([t_0,t_1];W)$ it holds that:
$$D_{t_0,t}^\alpha \big\|u(t)\big\|_W^2\leq2\Big(D_{t_0,t}^\alpha u(t), u(t)\Big)_W,\quad \textrm{for every }t\in (t_0,t_1],$$
and
$$cD_{t_0,t}^\alpha \big\|u(t)\big\|_W^2\leq2\Big(cD_{t_0,t}^\alpha u(t),u(t)\Big)_W,\quad \textrm{for every }t\in [t_0,t_1].$$
\end{theorem}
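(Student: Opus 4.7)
The plan is to combine Theorem \ref{firsthilber}, which already provides the desired inequality on the dense subspace $C^{1}([t_0,t_1])\otimes W$, with the density result in Proposition \ref{densitynova}, by showing that both sides of the inequality depend continuously on $u$ in the $C^{1}([t_0,t_1];W)$ topology (pointwise in $t$). So, given $u\in C^{1}([t_0,t_1];W)$, I would begin by choosing, via Proposition \ref{densitynova}, a sequence $\{u_k\}_{k=1}^\infty\subset C^{1}([t_0,t_1])\otimes W$ with $u_k\to u$ in $C^{1}([t_0,t_1];W)$. Applying Theorem \ref{firsthilber} yields, for each $k$,
\begin{equation*}
D_{t_0,t}^{\alpha}\|u_k(t)\|_W^2\leq 2\bigl(D_{t_0,t}^{\alpha}u_k(t),u_k(t)\bigr)_W,\qquad t\in(t_0,t_1],
\end{equation*}
and analogously for the Caputo derivative on $[t_0,t_1]$.

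Next I would justify passage to the limit. Since $u_k,u\in C^{1}([t_0,t_1];W)\subset AC([t_0,t_1];W)$, the vectorial analogue of Remark \ref{caputo1} gives $cD_{t_0,t}^{\alpha}u_k(t)=J_{t_0,t}^{1-\alpha}u_k'(t)$ and similarly for $u$, and Remark \ref{caputo1supernovo} produces the identity
\begin{equation*}
D_{t_0,t}^{\alpha}u_k(t)=J_{t_0,t}^{1-\alpha}u_k'(t)+\dfrac{(t-t_0)^{-\alpha}}{\Gamma(1-\alpha)}u_k(t_0),\qquad t\in(t_0,t_1].
\end{equation*}
The convergence $u_k'\to u'$ in $L^{\infty}(t_0,t_1;W)$ combined with Proposition \ref{samkokilbas} (applied with $p=\infty$) forces $J_{t_0,t}^{1-\alpha}u_k'\to J_{t_0,t}^{1-\alpha}u'$ uniformly on $[t_0,t_1]$; adding the boundary term, $D_{t_0,t}^{\alpha}u_k(t)\to D_{t_0,t}^{\alpha}u(t)$ pointwise for $t>t_0$ and $cD_{t_0,t}^{\alpha}u_k(t)\to cD_{t_0,t}^{\alpha}u(t)$ uniformly on $[t_0,t_1]$. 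Together with $u_k(t)\to u(t)$ uniformly, this handles the right-hand side of the inequality.

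For the left-hand side, observe that $\|u_k(t)\|_W^2\in C^{1}([t_0,t_1];\mathbb{R})$ with derivative $2(u_k'(t),u_k(t))_W$, and the latter converges uniformly to $2(u'(t),u(t))_W$ — the derivative of $\|u(t)\|_W^2$. Hence $\|u_k\|_W^2\to\|u\|_W^2$ in $C^{1}([t_0,t_1];\mathbb{R})$, and by exactly the same argument applied to these scalar $C^{1}$ functions, $D_{t_0,t}^{\alpha}\|u_k(t)\|_W^2\to D_{t_0,t}^{\alpha}\|u(t)\|_W^2$ pointwise for $t>t_0$ and the Caputo counterpart converges uniformly on $[t_0,t_1]$. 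Passing to the limit in the inequalities from Theorem \ref{firsthilber} finishes the proof.

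The only delicate point is the pointwise convergence of the Riemann–Liouville derivative at points close to $t_0$, because of the $(t-t_0)^{-\alpha}$ blow-up in the boundary term; this is why the first inequality must be restricted to $t\in(t_0,t_1]$. Once $t>t_0$ is fixed, however, the factor $(t-t_0)^{-\alpha}/\Gamma(1-\alpha)$ is a harmless constant multiplying $u_k(t_0)\to u(t_0)$ in $W$, so no additional obstacle appears — the main technical ingredient is simply the boundedness of $J_{t_0,t}^{1-\alpha}$ on $L^{\infty}$ supplied by Proposition \ref{samkokilbas}.
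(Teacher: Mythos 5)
Your proposal is correct and follows essentially the same route as the paper: approximate $u$ by elements of $C^1([t_0,t_1])\otimes W$ via Proposition \ref{densitynova}, apply Theorem \ref{firsthilber} to the approximants, and pass to the limit using the representation of the fractional derivatives through $J_{t_0,t}^{1-\alpha}$ (Remarks \ref{caputo1} and \ref{caputo1supernovo}), the boundedness of $J_{t_0,t}^{1-\alpha}$ on $L^\infty$ from Proposition \ref{samkokilbas}, and Cauchy--Schwarz. The paper's proof carries out exactly these estimates, including the same isolation of the $(t-t_0)^{-\alpha}$ boundary term that restricts the Riemann--Liouville inequality to $t\in(t_0,t_1]$.
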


\begin{proof} Observe that Proposition \ref{densitynova} ensures that for any $u\in C^1([t_0,t_1];W)$ there exists a sequence $\{u_k\}_{k=1}^\infty\subset C^1([t_0,t_1])\otimes W$ such that
\begin{equation}\label{novofakeequality0}{\big\|u_k-u\big\|_{C^0([t_0,t_1];W)}}+{\left\|u_k^\prime-u^\prime\right\|_{C^0([t_0,t_1];W)}}\rightarrow 0,\end{equation}
when $k\rightarrow\infty$.

By noticing that $[t_0,t_1]\ni t\rightarrow \|u(t)\|^2_W$ and $[t_0,t_1]\ni t\rightarrow \|u_k(t)\|^2_W$ are continuously differentiable real functions with
$$\dfrac{d}{dt}\|u(t)\|^2_W=2\Big(u^\prime(t),u(t)\Big)_W\quad\textrm{and}\quad\dfrac{d}{dt}\|u_k(t)\|^2_W=2\Big(u_k^\prime(t),u_k(t)\Big)_W,$$
by Remarks \ref{caputo1} and \ref{caputo1supernovo} we deduce
\begin{multline*}\Big|D_{t_0,t}^\alpha \big\|u(t)\big\|_W^2-D_{t_0,t}^\alpha \big\|u_k(t)\big\|_W^2\Big|\\\leq\Big|cD_{t_0,t}^\alpha \big\|u(t)\big\|_W^2-cD_{t_0,t}^\alpha \big\|u_k(t)\big\|_W^2\Big|+\dfrac{(t-t_0)^{-\alpha}}{\Gamma(1-\alpha)}\Big|\big\|u(t_0)\big\|_W^2-\big\|u_k(t_0)\big\|_W^2\Big|,
\end{multline*}
for every $t\in (t_0,t_1]$, and therefore
\begin{multline*}
\Big|D_{t_0,t}^\alpha \big\|u(t)\big\|_W^2-D_{t_0,t}^\alpha \big\|u_k(t)\big\|_W^2\Big|\leq2\Big|J_{t_0,t}^{1-\alpha}\Big(u^\prime(t),u(t)\Big)_W-J_{t_0,t}^{1-\alpha}
\Big(u_k^\prime(t),u_k(t)\Big)_W\Big|\\+\dfrac{(t-t_0)^{-\alpha}}{\Gamma(1-\alpha)}\Big|\big\|u(t_0)\big\|_W^2
-\big\|u_k(t_0)\big\|_W^2\Big|,
\end{multline*}
for every $t\in (t_0,t_1]$. By observing that Cauchy-Schwarz inequality and Proposition \ref{samkokilbas} ensure
\begin{multline*}\Big|J_{t_0,t}^{1-\alpha}\Big(u^\prime(t),u(t)\Big)_W-J_{t_0,t}^{1-\alpha}
\Big(u_k^\prime(t),u_k(t)\Big)_W\Big|\leq K_1\bigg\{\big\|u^\prime-u_k^\prime\big\|_{C^0([t_0,t_1];W)}\big\|u\big\|_{C^0([t_0,t_1];W)}\\+\big\|u_k^\prime\big\|_{C^0([t_0,t_1];W)}\big\|u-u_k\big\|_{C^0([t_0,t_1];W)}\bigg\},
\end{multline*}
and
\begin{multline*}\Big|\big\|u(t_0)\big\|_W^2
-\big\|u_k(t_0)\big\|_W^2\Big|\leq \big\|u-u_k\big\|_{C^0([t_0,t_1];W)}\big\|u\big\|_{C^0([t_0,t_1];W)}\\+\big\|u_k\big\|_{C^0([t_0,t_1];W)}\big\|u-u_k\big\|_{C^0([t_0,t_1];W)},
\end{multline*}
for every $t\in (t_0,t_1]$, we deduce that
\begin{align}\label{novofakeequality1}\Big|D_{t_0,t}^\alpha \big\|u(t)\big\|_W^2-D_{t_0,t}^\alpha \big\|u_k(t)\big\|_W^2\Big|&\leq K_2\big[1+(t-t_0)^{-\alpha}\big]\bigg\{{\big\|u_k-u\big\|_{C^0([t_0,t_1];W)}}\bigg\}\\&\nonumber\hspace*{3.5cm}+K_3{\left\|u_k^\prime-u^\prime\right\|_{C^0([t_0,t_1];W)}},
\end{align}
for every $t\in (t_0,t_1]$.

On the other hand, observe
\begin{multline*}\left|\Big(D_{t_0,t}^\alpha u(t),u(t)\Big)_W-\Big(D_{t_0,t}^\alpha u_k(t),u_k(t)\Big)_W\right| \leq {\left\|D_{t_0,t}^\alpha u(t)-D_{t_0,t}^\alpha u_k(t)\right\|_W}{\left\|u(t)\right\|_W}\nonumber\\+
{\big\|D_{t_0,t}^\alpha u_k(t)\big\|_W}{\left\|u(t)-u_k(t)\right\|_W},\end{multline*}
for every $t\in (t_0,t_1]$.

Therefore, using analogous arguments, we obtain the inequality
\begin{multline}\label{novofakeequality2}\left|\Big(D_{t_0,t}^\alpha u(t),u(t)\Big)_W-\Big(D_{t_0,t}^\alpha u_k(t),u_k(t)\Big)_W\right| \\\leq  \,\,K\left\{\big[1+(t-t_0)^{-\alpha}\big]{\big\|u_k-u\big\|_{C^0([t_0,t_1];W)}}\right.+\left.{\left\|u_k^\prime-u^\prime\right\|_{C^0([t_0,t_1];W)}}\right\},\end{multline}
for every $t\in (t_0,t_1]$.

Hence, \eqref{novofakeequality0}, \eqref{novofakeequality1}, \eqref{novofakeequality2} and Theorem \ref{firsthilber} completes this proof. The arguments used to prove the second inequality are almost the same and therefore are omitted.
\end{proof}

Finally we present the full version of our main theorem, which unites practically all the results presented so far. We emphasize that this theorem is fundamental to make Faedo-Galerkin method applicable to partial differential equations with fractional time derivative.

\begin{theorem}\label{finalvectorial} Let $V$ and $H$ be Hilbert spaces that satisfies the hypothesis of Theorem \ref{originalteman}.
\begin{itemize}
\item[(a)] Assume that $p>1/(1-\alpha)$ and $p\geq2$. If $u\in L^p(t_0,t_1;V)$, $D_{t_0,t}^\alpha u\in L^2(t_0,t_1;V^\prime)$ and $g_{1-\alpha}*\|u(t)\|_H^2\in W^{1,1}(t_0,t_1;\mathbb{R})$, then $u$ is almost everywhere equal to a continuous function from $[t_0,t_1]$ into $H$ and
\begin{equation*}D_{t_0,t}^\alpha \big\|u(t)\big\|_H^2\leq2\Big\langle D_{t_0,t}^\alpha u(t),u(t)\Big\rangle_{V^\prime,V}\,\,,\quad \textrm{for almost every }t\in [t_0,t_1].\end{equation*}

\item[(b)] If $u\in L^2(t_0,t_1;V)$, $cD_{t_0,t}^\alpha u\in L^2(t_0,t_1;V^\prime)$ and $g_{1-\alpha}*\|u(t)\|_H^2\in W^{1,1}(t_0,t_1;\mathbb{R})$, then $u$ is almost everywhere equal to a continuous function from $[t_0,t_1]$ into $H$ and
\begin{equation*}cD_{t_0,t}^\alpha \big\|u(t)\big\|_H^2\leq2\Big\langle cD_{t_0,t}^\alpha u(t),u(t)\Big\rangle_{V^\prime,V}\,\,,\quad \textrm{for almost every }t\in [t_0,t_1].\end{equation*}
\end{itemize}
\end{theorem}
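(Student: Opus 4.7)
The plan is to reduce the vectorial statements to the already-established $C^1$ Hilbert-valued inequality (Theorem \ref{finalcorocoro}) via a Bochner mollification argument in time, mirroring the strategy used for the scalar cases in Theorems \ref{finalriemann} and \ref{finalcaputo}. I would first handle item (a). Extend $u$ by zero outside $[t_0,t_1]$ and define $u_\varepsilon:=\varrho_\varepsilon * u$, with $\varrho_\varepsilon$ as in Proposition \ref{novamolli1}. Because $u\in L^p(t_0,t_1;V)$, the mollification belongs to $C^\infty(\mathbb{R};V)\subset C^1([t_0,t_1];H)$, so Theorem \ref{finalcorocoro} applied with $W=H$ yields
\begin{equation*}
D_{t_0,t}^\alpha\|u_\varepsilon(t)\|_H^2 \leq 2\bigl(D_{t_0,t}^\alpha u_\varepsilon(t),u_\varepsilon(t)\bigr)_H
= 2\bigl\langle D_{t_0,t}^\alpha u_\varepsilon(t),u_\varepsilon(t)\bigr\rangle_{V',V},
\end{equation*}
where the last equality holds since $u_\varepsilon(t)\in V$. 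The key identity to transport is \eqref{star}: repeating that manipulation for Bochner integrals gives $D_{t_0,t}^\alpha u_\varepsilon = \varrho_\varepsilon * G$, with $G$ the extension by zero of $D_{t_0,t}^\alpha u$. For this computation to go through one needs $J_{t_0,s}^{1-\alpha}u(s)\big|_{s=t_0}=0$ in $V$, which is the Bochner-valued analogue of Proposition \ref{novamolli2} and explains the hypothesis $p>1/(1-\alpha)$.

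Next I would pass to the weak limit. Test the smooth inequality against a nonnegative $\phi\in C_c^\infty((t_0,t_1);\mathbb{R})$ and transfer the Riemann--Liouville derivative onto $\phi$, using the hypothesis $g_{1-\alpha}*\|u(t)\|_H^2\in W^{1,1}(t_0,t_1;\mathbb{R})$ to justify this for the limit function. Then send $\varepsilon\to 0$: the strong convergences $u_\varepsilon\to u$ in $L^p(t_0,t_1;V)$ and $D_{t_0,t}^\alpha u_\varepsilon\to D_{t_0,t}^\alpha u$ in $L^2(t_0,t_1;V')$ (both consequences of Proposition \ref{novamolli1} together with the transported identity above), combined with Cauchy--Schwarz and Proposition \ref{samkokilbas}, control each term in the weak inequality. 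A Du Bois--Reymond argument converts the inequality against arbitrary nonnegative $\phi$ into the claimed pointwise bound almost everywhere.

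Item (b) follows the same template, or can be reduced directly to (a) by the translation trick from the proof of Theorem \ref{finalcaputo}: once we know $u$ has a continuous $H$-valued representative, setting $w(t):=u(t)-u(t_0)$ and applying Remark \ref{caputo1supernovo} term by term converts $cD_{t_0,t}^\alpha\|u(t)\|_H^2$ and $\langle cD_{t_0,t}^\alpha u(t),u(t)\rangle_{V',V}$ into expressions involving only $D_{t_0,t}^\alpha w$ evaluated on $w$, to which the result of item (a) applies (and the stronger hypothesis $p>1/(1-\alpha)$ is not required because the constant subtraction kills the boundary singularity that forced it in (a)).

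The step I expect to be the main obstacle is establishing the asserted continuity of $u$ as a function from $[t_0,t_1]$ into $H$. The fractional identity in Proposition \ref{properties}(v), applied in $V'$, only delivers a $V'$-continuous representative; upgrading to strong continuity in $H$ classically uses the Lions--Magenes integration-by-parts identity $\tfrac{d}{dt}\|u(t)\|_H^2=2\langle u'(t),u(t)\rangle_{V',V}$, which fails in the fractional regime. The way out is to use precisely the inequality we just proved, together with $g_{1-\alpha}*\|u(t)\|_H^2\in W^{1,1}(t_0,t_1;\mathbb{R})$, to get a continuous representative of $t\mapsto\|u(t)\|_H^2$; combining this norm-continuity with $V'$-weak continuity (and using that $H$ is a Hilbert space, so $V'$-weak plus norm-convergence in $H$ forces strong $H$-convergence) yields the desired continuity statement. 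Carefully verifying that the mollification and weak formulation indeed deliver the continuous representative of $\|u\|_H^2$ (rather than merely almost-everywhere information) is the delicate technical point of the whole proof.
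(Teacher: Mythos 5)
Your argument for the inequalities themselves matches the paper's proof essentially step for step: extend $u$ by zero, mollify in time, transport the identity \eqref{star} to Bochner integrals (which is where $p>1/(1-\alpha)$ enters, via the vector-valued analogue of Proposition \ref{novamolli2}), apply Theorem \ref{finalcorocoro} to $U_\varepsilon$ together with the identification $(f,g)_H=\langle f,g\rangle_{V^\prime,V}$, test against nonnegative $\phi\in C_c^\infty$, pass to the limit using \eqref{convergenceresultswewewe1}, and finish with Du Bois--Reymond. Your treatment of item (b) via Lemma \ref{finalriemann2} and the translation trick of Theorem \ref{finalcaputo} is also the route the paper indicates.

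Where you genuinely diverge --- and where there is a gap --- is the continuity of $u$ into $H$. You propose to first obtain a continuous representative of the scalar function $t\mapsto\|u(t)\|_H^2$ from the hypothesis $g_{1-\alpha}*\|u\|_H^2\in W^{1,1}(t_0,t_1;\mathbb{R})$ together with the inequality, and then upgrade weak continuity to strong continuity by the Radon--Riesz argument. The second half of that plan is fine, but the first half does not go through as stated: $g_{1-\alpha}*\|u\|_H^2\in W^{1,1}$ gives (absolute) continuity of the \emph{convolution}, not of $\|u\|_H^2$ itself, and recovering $\|u\|_H^2$ means applying $J_{t_0,t}^{\alpha}$ to the $L^1$ function $D_{t_0,t}^{\alpha}\|u\|_H^2$ (plus a possible singular term $(t-t_0)^{\alpha-1}$ from Proposition \ref{properties}(v)); since $J_{t_0,t}^{\alpha}$ does not map $L^1$ into $C^0$, no continuity is produced. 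The inequality only bounds $D_{t_0,t}^{\alpha}\|u\|_H^2$ above by an $L^1$ function, which adds no regularity. The paper avoids this entirely by working with the approximating sequence: it applies the already-proved smooth inequality to the \emph{differences} $U_m-U_n$, acts with $J_{t}^{\alpha}$ on both sides, and uses Young's inequality to bound $\|U_m(t)-U_n(t)\|_H^2$ by $\frac{2}{\Gamma(\alpha)}\int_{0}^{t}(t-s)^{\alpha-1}\big[\|D_{s}^{\alpha}U_m(s)-D_{s}^{\alpha}U_n(s)\|_{V^\prime}^2+\|U_m(s)-U_n(s)\|_{V}^2\big]\,ds$, concluding that $\{U_k\}$ is Cauchy in $C^0([0,t_1];H)$ from the convergences of $U_\varepsilon$ in $L^p(0,t_1;V)$ and of $D_{t}^{\alpha}U_\varepsilon$ in $L^2(0,t_1;V^\prime)$. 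You should replace your continuity step by this Cauchy-sequence argument (or supply an independent proof that $\|u(\cdot)\|_H^2$ has a continuous representative, which your current reasoning does not provide).
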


\begin{remark} Notice that the conditions given in Definition \ref{riemannder} are sufficient to ensure the existence of Riemann-Liouville fractional derivative, however they are not necessary. This is why we just impose in item $(a)$ the condition that $D_{t_0,t}^\alpha u$ should exists a.e. in $[t_0,t_1]$ and belong to $L^2(t_0,t_1;V^\prime)$. On the other hand, $(b)$ is more delicate. When we impose the hypotheses $cD_{t_0,t}^\alpha u\in L^2(t_0,t_1;V^\prime)$, we need to suppose that $u(t_0)$ is defined and belongs to $V^\prime$, so that Caputo fractional derivative formula can make sense. Nonetheless, this is not much. Observe that we obtain $u\in C^0([t_0,t_1];H)$ as one of the conclusions of Theorem \ref{finalvectorial}.
\end{remark}

\begin{proof}[Proof of Theorem~\ref{finalvectorial}] This proof is very similar to the one presented in Theorem \ref{finalriemann}, therefore we avoid several steps that were already implemented there.\\

 $(a)$ Assume that $t_0=0$. Let us begin by considering $\varrho\in C^\infty(\mathbb{R};\mathbb{R})$ with compact support contained in $(0,t_1)$ satisfying $\int_{-\infty}^\infty\varrho(s)ds=1$. Define $U\in L^2(\mathbb{R};V)$, by
$$U(t)=\left\{\begin{array}{ll}u(t),&\textrm{for almost every }t\in[0,t_1],\\
0,&\textrm{otherwise}.
\end{array}\right.$$
The same argument used in Theorem \ref{finalriemann} to convolutions together with Propositions \ref{samkokilbas} and \ref{novamolli1} ensure (for the classical setup of the following convergences see \cite[Lemma 1.3 of Chapter 3]{Te1} or \cite[sections 5.3.1 and 5.9.2]{Ev1})
\begin{equation}\label{convergenceresultswewewe1}\left\{\begin{array}{l}U_\varepsilon\in C^\infty([0,t_1],V)\vspace{0.2cm}\\
U_{\varepsilon}\rightarrow u,\textrm{ when }\varepsilon\rightarrow0,\,\, \textrm{in }L^p(0,t_1;V),\vspace{0.2cm}\\
g_{1-\alpha}*\|U_{\varepsilon}\|_H^2\rightarrow g_{1-\alpha}*\|u\|^2_H,\textrm{ when }\varepsilon\rightarrow0,\,\, \textrm{in }L^1(0,t_1;\mathbb{R}),\quad\textrm{and}\vspace{0.2cm}\\
D_{t}^\alpha U_{\varepsilon}\rightarrow D_{t}^\alpha u,\textrm{ when }\varepsilon\rightarrow0,\,\,\textrm{in }L^2(0,t_1;V^\prime).\end{array}\right.\end{equation}

A natural consequence of the identification made on the spaces $V$ and $H$, is that
\begin{equation}\label{finduality}\big(f,g\big)_H=\langle f,g\rangle_{V^\prime,V},\quad \textrm{for every }f\in H\textrm{ and }g\in V.\end{equation}
Thus, a direct application of Theorem \ref{finalcorocoro} and identity \eqref{finduality} to function $U_\varepsilon(t)$ gives us
\begin{equation*}D_{t}^\alpha \big\|U_\varepsilon(t)\big\|_H^2\leq2\Big(D_{t}^\alpha U_\varepsilon(t),U_\varepsilon(t)\Big)_H=2\Big\langle D_{t}^\alpha U_\varepsilon(t),U_\varepsilon(t)\Big\rangle_{V^\prime,V},
\end{equation*}
for every $t\in (t_0,t_1]$ and $\varepsilon>0$. Finally, by repeating the same arguments used in the last part of the proof of Theorem \ref{finalriemann} we achieve the desired inequality.

It remains for us to prove that $u$ is almost everywhere equal to a continuous function from $[0,t_1]$ into $H$. To this end, observe that for each subsequence $\{U_k\}_{k=1}^\infty\subset\{U_\varepsilon\}_{\varepsilon>0}$ we have
$$D_{t}^\alpha \big\|U_m(t)-U_n(t)\big\|_H^2\leq2\Big\langle D_{t}^\alpha \big[U_m(t)-U_n(t)\big],U_m(t)-U_n(t)\Big\rangle_{V^\prime,V},$$
for every $t\in(0,t_1)$ and $n,m\in\mathbb{N}^*$. By applying operator $J_{t}^\alpha$ in both sides of the inequality, Young inequality ensures
\begin{multline*}\big\|U_m(t)-U_n(t)\big\|_H^2\leq\dfrac{2}{\Gamma(\alpha)}\int_{0}^t{(t-s)^{\alpha-1}\Big\langle D_{s}^\alpha \big[U_m(s)-U_n(s)\big],U_m(s)-U_n(s)\Big\rangle_{V^\prime,V}}\,ds\\
\leq\dfrac{2}{\Gamma(\alpha)}\int_{0}^t{(t-s)^{\alpha-1}\Big[\big\|D_{s}^\alpha U_m(s)-D_{t_0,s}^\alpha U_n(s)\big\|^2_{V^\prime}+\|U_m(s)-U_n(s)\big\|^2_V\Big]}\,ds
\end{multline*}
what together with \eqref{convergenceresultswewewe1} implies that $\{U_k\}_{k=1}^\infty$ is a Cauchy sequence in $C^0([0,t_1],H)$. An usual argument now completes the proof of this item.

For the case $t_0\not=0$ we also argument like in Theorem \ref{finalriemann}.

$(b)$ The proof of this inequality follows very similar steps to those discussed in the proof of item $(a)$ together with Lemma \ref{finalriemann2} and Theorem \ref{finalcaputo}, therefore it is omitted.
\end{proof}

\section{An application of the theory in partial differential equations}
\label{applications}

In this section we apply all the techniques developed throughout this manuscript in the theory of partial differential equations with fractional derivative in the time variable.

At first we emphasize that several researchers recently started to study the incompressible Navier-Stokes equations with fractional derivative in the time variable, in the most varied circumstances; as a survey on this topic see \cite{CarPl1,ZhPe1,ZhPeAhAl1,ZoLvWu1} and references therein.

This motivated us to study the fractional version of the 2D Stokes equations in bounded domains. More specifically, we consider the following system of equations:
\begin{equation}\label{fns} \tag{FS}
\left\{\begin{array}{rl}
cD^\alpha_{0,t} u(x,t)- \rho \Delta u(x,t) + \nabla p(x)=f(x), & \quad \text{ in } \Omega\times(0,T),\\
\operatorname{div} u(x,t)=0, & \quad \text{ in } \Omega\times(0,T), \\
u(x,t)=0,&\quad\text{ on } \partial\Omega\times[0,T],\\
u(x,0)= u_0, & \quad \text{ in } \Omega.
\end{array}\right.
\end{equation}
where $\alpha\in(0,1)$ is a fixed number, $cD_t^\alpha$ is the Caputo fractional derivative of order $\alpha$ at $t_0=0$ (see Definition \ref{caputo}), $\rho$ is a positive constant, $u_0$ an initial condition, $f$ the non homogeneous term and $\Omega\subset\mathbb{R}^2$ is a bounded domain.

Consider now the function space
$$H:=\Big\{u\in \big[L^2(\Omega)\big]^2:\operatorname{div} u=0\textrm{ in }\Omega\textrm{ and }u\cdot\nu=0\textrm{ on }\partial\Omega\Big\},$$
which with the induced topology of $\big[L^2(\Omega)\big]^2$ becomes a Hilbert space. We also define the function space
$$V:=\Big\{u\in \big[H^1_0(\Omega)\big]^2:\operatorname{div} u=0\textrm{ in }\Omega\Big\},$$
that equipped with the inner product (thanks to Poincare's inequality)
\begin{equation*}\label{V}(v_1,v_2)_V:=\big(\nabla v_1,\nabla v_2\big)_H,\qquad v_1,v_2\in V,\end{equation*}
is a Hilbert space. By classical arguments, we identify $H$ with its dual space $H^\prime$ and write the inclusions
$$V\subset H\equiv H'\subset V'$$
where each space is dense in the next one and the injections are continuous.

The following definition settles down the notion of weak solution to \eqref{fns}.

\begin{definition} Consider $f\in V^\prime$ and $u_0\in H$. A weak solution to the classical problem \eqref{fns} is a function $u\in L^2(0,T;V)$ such that $cD_t^\alpha u\in L^2(0,T;V^\prime)$, $u(0)=u_0$ and $u$ satisfies the variational form
\begin{equation*}
    cD_t^\alpha (u(t),v)_H+\rho(u(t),v)_V=\big\langle f,v\big\rangle_{V^\prime,V},\quad \textrm{for almost every }t\in [0,T],
\end{equation*}
and for any $v\in V$.
\end{definition}

\begin{remark} Like in the classical setting of the Stokes equations, $u\in L^2(0,T;V)$ seems to be not enough for us to make sense of $u(0)=u_0$, since $u$ is defined almost everywhere in $[0,T]$. On the other hand, once $cD_t^\alpha u\in L^2(0,T;V^\prime)$, Theorem \ref{finalvectorial} ensures that $u\in C([0,T];H)$. This last argument eliminates any doubt regarding the condition $u(0)=u_0$.
\end{remark}

The notions introduced above are enough for us to prove the main result of this section. Just observe that in what follows we use the standard Faedo-Galerkin method mixed with arguments from Theorem \ref{finalvectorial}, and therefore we just emphasize the parts of the proof that are new.

\begin{theorem}\label{existenciaeunicidade} For given $f\in V^\prime$ and $u_0\in H$, there exists a unique weak solution to \eqref{fns}. Moreover, $u\in C([0,T];H)$.
\end{theorem}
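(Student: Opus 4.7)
The plan is the standard Faedo--Galerkin scheme, with the single new ingredient being Theorem \ref{finalvectorial}(b), which will replace the classical identity $\tfrac{d}{dt}\|u\|_H^2=2\langle u',u\rangle_{V',V}$ used in the non-fractional theory. Concretely, I would fix the orthonormal basis $\{w_k\}_{k=1}^\infty$ of $H$ formed by the eigenfunctions of the Stokes operator on $\Omega$, with eigenvalues $0<\lambda_1\leq\lambda_2\leq\cdots$; this basis is also orthogonal in $V$, and its existence relies on the compact embedding $V\hookrightarrow H$ coming from Rellich--Kondrachov. For each $m\in\mathbb{N}^*$ I seek $u_m(t)=\sum_{k=1}^m g_{km}(t)w_k$ solving the decoupled system
\begin{equation*}
cD_{0,t}^\alpha g_{jm}(t)+\rho\lambda_j g_{jm}(t)=\langle f,w_j\rangle_{V',V},\quad g_{jm}(0)=(u_0,w_j)_H,\quad j=1,\ldots,m.
\end{equation*}
Classical theory of linear Caputo fractional ODEs gives an explicit solution in terms of one- and two-parameter Mittag--Leffler functions, which are analytic on $[0,T]$; in particular each $g_{jm}$ lies in $C^0([0,T];\mathbb{R})$ with $g_{1-\alpha}*g_{jm}$ and $g_{1-\alpha}*g_{jm}^2$ in $W^{1,1}(0,T;\mathbb{R})$, the regularity required to invoke Theorem \ref{finalcaputo}.

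Next I would derive the a priori estimate. Multiplying the $j$-th equation by $g_{jm}(t)$, summing over $j$, and applying Theorem \ref{finalcaputo} componentwise gives
\begin{equation*}
\tfrac{1}{2}cD_{0,t}^\alpha\|u_m(t)\|_H^2+\rho\|u_m(t)\|_V^2\leq\langle f,u_m(t)\rangle_{V',V}\leq\tfrac{\rho}{2}\|u_m(t)\|_V^2+\tfrac{1}{2\rho}\|f\|_{V'}^2.
\end{equation*}
Applying $J_{0,t}^\alpha$ and using item (viii) of Proposition \ref{properties} yields a uniform-in-$m$ bound on $\|u_m\|_{L^\infty(0,T;H)}$, and then integration in $t$ bounds $\|u_m\|_{L^2(0,T;V)}$; the equation itself provides a uniform bound on $cD_{0,t}^\alpha u_m$ in $L^2(0,T;V')$. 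Extracting weakly convergent subsequences $u_m\rightharpoonup u$ in $L^2(0,T;V)$ and $cD_{0,t}^\alpha u_m\rightharpoonup cD_{0,t}^\alpha u$ in $L^2(0,T;V')$, and testing the Galerkin identity against $\varphi(t)w_j$ with $\varphi\in C_c^\infty(0,T)$ followed by a density argument, recovers the variational formulation for $u$. Theorem \ref{finalvectorial}(b) then guarantees $u\in C([0,T];H)$, so that $u(0)$ is meaningful; testing against a $\varphi$ with $\varphi(0)=1$ and $\varphi(T)=0$ and comparing with $u_m(0)=P_m u_0\to u_0$ in $H$ identifies $u(0)=u_0$.

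Uniqueness will follow immediately from Theorem \ref{finalvectorial}(b): if $u_1,u_2$ are two weak solutions with identical data and $w:=u_1-u_2$, then $w\in L^2(0,T;V)$, $cD_{0,t}^\alpha w\in L^2(0,T;V')$, $w(0)=0$, and $w$ satisfies $\langle cD_{0,t}^\alpha w(t),w(t)\rangle_{V',V}+\rho\|w(t)\|_V^2=0$ almost everywhere. Theorem \ref{finalvectorial}(b) then produces $cD_{0,t}^\alpha\|w(t)\|_H^2\leq -2\rho\|w(t)\|_V^2\leq 0$, and applying $J_{0,t}^\alpha$ with the null initial condition forces $\|w(t)\|_H\equiv 0$.

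The main obstacle I expect is verifying, at each stage, the technical hypothesis $g_{1-\alpha}*\|\cdot\|_H^2\in W^{1,1}(0,T;\mathbb{R})$ demanded by Theorem \ref{finalvectorial}(b), because the Faedo--Galerkin solutions are not in general absolutely continuous (precisely the defect that disqualifies Theorem \ref{zhou} from the application, as stressed in the introduction). At the Galerkin level this regularity can be read off from the explicit Mittag--Leffler formulas for $g_{jm}$; for the limit $u$ and for the difference $w$ it must be extracted from the uniform energy bounds and Remark \ref{caputo1supernovo} relating Caputo and Riemann--Liouville derivatives, and this is precisely where the extra generality of Theorem \ref{finalvectorial} over Theorem \ref{zhou} is crucial.
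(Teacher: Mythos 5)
Your proposal is correct and follows the same overall strategy as the paper --- Faedo--Galerkin approximation combined with the new fractional energy inequality of Theorem \ref{finalvectorial}(b) --- but it differs in one substantive choice that changes the supporting sub-arguments. The paper takes an arbitrary linearly independent total set $\{w_1,w_2,\ldots\}$ in $V$, so the reduced system \eqref{ode} is a \emph{coupled} linear fractional ODE system; existence of the approximations then rests on abstract local well-posedness for Caputo-type systems together with a blow-up alternative to push $T_n$ up to $T$, and the relevant regularity of $u_n$ is recorded only as membership in $C_\alpha([0,T_n];V)$. You instead take the Stokes eigenfunctions, which decouples the system into scalar equations $cD^\alpha_{0,t}g_{jm}+\rho\lambda_j g_{jm}=\langle f,w_j\rangle$ with explicit Mittag--Leffler solutions; this buys you global existence of the approximations for free, lets you verify the hypotheses of Theorem \ref{finalcaputo} directly from the explicit formulas, and makes the uniform $L^2(0,T;V')$ bound on $cD^\alpha_{0,t}u_m$ transparent because the spectral projections are uniformly bounded (a point the paper does not address with its general basis). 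Your uniqueness argument via $cD^\alpha_{0,t}\|w\|_H^2\leq 0$ and an application of $J^\alpha_{0,t}$ is exactly the ``standard argument'' the paper declines to write out, and you correctly identify the one genuinely delicate point of the whole scheme, namely verifying $g_{1-\alpha}*\|\cdot\|_H^2\in W^{1,1}$ at each stage --- which is precisely the hypothesis that makes Theorem \ref{finalvectorial} usable where Theorem \ref{zhou} is not. One small imprecision: when you pass from $J^\alpha_{0,t}$ applied to the energy inequality to the $L^2(0,T;V)$ bound, you should note that the kernel satisfies $(T-s)^{\alpha-1}\geq T^{\alpha-1}$ on $(0,T)$, so the weighted integral $J^\alpha_{0,T}\|u_m\|_V^2$ controls $\int_0^T\|u_m(s)\|_V^2\,ds$ from above up to a constant; ``integration in $t$'' alone does not quite describe this step.
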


\begin{proof} Since $V$ is an infinite dimensional separable vector space, there exists a linear independent set $\{w_1,w_2,\ldots\}$ that is total in $V$. Now for each $n\in\mathbb{N}^*$, define
$$u_n(t):=\sum_{i=1}^n{g_{in}(t)\omega_i},\quad \textrm{for every } t\in[0,T_n]\subset[0,T],$$
where $g_{in}(t)$ are the maximal local solutions of the linear differential system with Caputo fractional derivative of order $\alpha$,
\begin{equation}\label{ode}
   \left\{\begin{array}{ccc} cD_{0,t}^\alpha (u_n(t),\omega_1)_H+\rho(u_n(t),\omega_1)_V&=&\big\langle f,\omega_1\big\rangle_{V^\prime,V},\\
   cD_{0,t}^\alpha (u_n(t),\omega_2)_H+\rho(u_n(t),\omega_2)_V&=&\big\langle f,\omega_2\big\rangle_{V^\prime,V},\\
   \vdots&=&\vdots\\
   cD_{0,t}^\alpha (u_n(t),\omega_n)_H+\rho(u_n(t),\omega_n)_V&=&\big\langle f,\omega_n\big\rangle_{V^\prime,V},\end{array}\right.
\end{equation}
with initial condition $u_n(0)=u_{0n}$, where $u_{0n}$ is the orthogonal projection of $u_0$ in the subspace $[\omega_1,\omega_2,\ldots,\omega_n]$, with $u_{0n}\rightarrow u_0$ in $H$, when $n\rightarrow\infty$ (see \cite{Car1,CarFe1,AnCaCaMa1} for details on the local existence and uniqueness of solution).

Because of this, for each $n\in\mathbb{N}^*$, function $u_n\in C_\alpha([0,T_n];V)$, where
$$C_\alpha([0,T_n];V):=\Big\{u\in C([0,T_n];V):cD_t^\alpha u\in C([0,T_n];V)\Big\}.$$

Since the energy equation associated with \eqref{ode} is given by
\begin{equation*}\Big(cD_t^\alpha u_n(t),u_n(t)\Big)_H+\rho\|(u_n(t))\|_{V}^2=\big\langle f,u_n(t)\big\rangle_{V^\prime,V}\,\,,\end{equation*}
Theorem \ref{finalvectorial} allows us to obtain the inequality
\begin{equation}\label{testeineine}\frac{1}{2}cD_t^\alpha\|u_n(t)\|_{H}^2+\rho\|(u_n(t))\|_{V}^2\leq\big\langle f,u_n(t)\big\rangle_{V^\prime,V}.\end{equation}

Finally, by applying Young inequality to \eqref{testeineine} we achieve
\begin{equation}\label{energyn2}cD_t^\alpha\|u_n(t)\|_{H}^2+\rho\|(u_n(t))\|_{V}^2\leq C({\rho})\|f\|^2_{V^\prime}.\end{equation}

The bounds obtained above and the blow up result presented in \cite{Car1,AnCaCaMa1}, guarantees that the maximum time of existence $T_n=T$, for any $n\in\mathbb{N}^*$. Also, \eqref{energyn2} ensures the existence of a subsequence $\{u_{n_k}\}_{k=1}^{+\infty}$ of $\{u_{n}\}_{n=1}^{+\infty}$ and a function $u$ belonging to $L^2(0,T;V)\cap L^\infty(0,T;H)$ such that
$$\left\{\begin{array}{l}\lim_{k\rightarrow\infty}{u_{n_k}}=u, \textrm{ in the weak topology of } L^2(0,T;V)\textrm{ and }\vspace*{0.2cm}\\
\lim_{k\rightarrow\infty}{u_{n_k}}=u, \textrm{ in the weak-star topology of } L^\infty(0,T;H).\end{array}\right.$$

The above conclusions allow us to apply a limit argument in
\begin{equation*}
     (cD_t^\alpha u_{n_k}(t),v)_H+\rho(u_{n_k}(t),v)_V=\big\langle f,v\big\rangle_{V^\prime,V},
\end{equation*}
exactly as in the classical procedure, to conclude that $u$ is the weak solution of this problem. The uniqueness of solution is done with standard arguments that we avoid express here.
\end{proof}

\section{Last considerations}
\label{conclusions}

Recall that in Section \ref{pre} we have obtained two inequalities involving polynomial functions and the Leibniz rule, which in its classical formulation is recursively used to study the energy equation of PDE's. We also emphasize that the inequality with Riemann-Liouville fractional derivative is completely new to the literature of fractional calculus; this allows us to conjecture that this method could be used to prove inequalities, like the ones presented here, to other fractional derivatives.

Observe that the example presented in Section \ref{applications} is just a simple application of the inequality presented in Section \ref{maintheorem}. In fact, the Heat Equation with fractional derivatives could have be another example where this method works. We could also have used this inequalities to prove that systems of ordinary differential equations with fractional derivatives that possess a quadratic Lyapunov function (for instance $V(x,y)=x^2+y^2$) are stable or asymptotically stable (see \cite{AgOrHr1,LiChPo1,St1} for details on the definitions and previous studies) or even to study the asymptotic profile of the solutions of some partial differential equations.

At last, let us give a simple argument to support the fact that Theorem \ref{finalvectorial} can be a better option then Theorem \ref{zhou}. Our major concern resides in the fact that the solution of \eqref{ode} belongs to $C_\alpha([0,T];V)$ which is a space much bigger then $AC([0,T];V)$. This is why we cannot see clearly how to apply Theorem \ref{zhou} in the proof of Theorem \ref{existenciaeunicidade}.

\section*{Acknowledgement}
 The authors would like to thank Universidade Federal do Esp\'{\i}rito Santo and Universidade Federal de Santa Catarina for the hospitality and support during respective short term visits.

\bibliographystyle{model1-num-names}

\end{document}